\documentclass[11pt]{amsart}
\usepackage{amssymb}
\usepackage{amsmath}
\usepackage{amsfonts}
\usepackage{mathrsfs}
\usepackage{psfrag}
\usepackage{verbatim}
\usepackage{color}
\usepackage{bm}
\usepackage[usenames,dvipsnames]{xcolor}
\usepackage{graphicx} 
\usepackage{bm}
\usepackage{bbm}
\usepackage{dsfont}
\usepackage{enumerate}
\usepackage{marginnote}
\usepackage{tcolorbox}
\usepackage[colorlinks=true]{hyperref}
\hypersetup{citecolor=MidnightBlue}
\newcommand{\ind}{\mathds{1}}

\DeclareGraphicsExtensions{}
\theoremstyle{plain}
\newtheorem{theorem}{Theorem}[section]

\newtheorem{lemma}[theorem]{Lemma}
\newtheorem{proposition}[theorem]{Proposition}

\theoremstyle{definition}
\newtheorem{assumption}[theorem]{Assumption}
\newtheorem{definition}[theorem]{Definition}
\newtheorem{remark}[theorem]{Remark}
\theoremstyle{remark}

\newcommand{\eps}{\varepsilon}

\newcommand{\BR}{\mathbb{R}}
\newcommand{\BP}{\mathbb{P}}
\newcommand{\BE}{\mathbb{E}}
\newcommand{\BZ}{\mathbb{Z}}
\newcommand{\BN}{\mathbb{N}}

\newcommand{\DD}{\mathsf{D}} 

\newcommand{\sfR}{\mathsf{R}}
\newcommand{\sfV}{\mathsf{V}}
\newcommand{\sfr}{\mathsf{r}}
\newcommand{\sfv}{\mathsf{v}}
\newcommand{\sfT}{\mathsf{T}} 
\newcommand{\itd}{\mathtt{Id}} 
\newcommand{\sfN}{\mathsf{N}} 
\newcommand{\sfK}{\mathsf{K}}
\newcommand{\sfw}{\mathsf{w}}

\newcommand{\II}{\mathtt{I}}
\newcommand{\JJ}{\mathtt{J}}

\newcommand{\LL}{\mathtt{L}}
\newcommand{\MM}{\mathtt{M}}
\newcommand{\NN}{\mathtt{N}}
\newcommand{\lhp}{\varrho}

\newcommand{\ren}{\mathtt{q}}
\newcommand{\Ren}{\mathtt{Q}}

\newcommand{\filt}{\mathscr{F}}

\begin{document}
\title[Randomly perturbed system with state-dependent impulse effects]{Fluctuation analysis for a randomly perturbed dynamical system with state-dependent impulse effects}
\author{Ashif Khan and Chetan D. Pahlajani}
\address{Mathematics\\ Indian Institute of Technology Gandhinagar}
\address{Department of Mathematics\\ Indian Institute of Technology Gandhinagar}
\email{khanashif@iitgn.ac.in, cdpahlajani@iitgn.ac.in}
\date{\today.}
\thanks{The first author would like to thank Government of India for financial support through the Prime Minister's Research Fellowship (PMRF) scheme. The second author's research was supported by ANRF project number MTR/2023/000545.}
\maketitle

{\small \centerline{Department of Mathematics, Indian Institute of Technology Gandhinagar, Palaj, Gandhinagar 382055, India}}

\maketitle

\begin{abstract}
The principal aim of the present work is to explore limit theorems for small random perturbations of a planar impulsive dynamical system, where impulses occur at hitting times of a suitable switching surface, and are thus state-dependent. Working with a simplified example in polar coordinates, we obtain---for any fixed time horizon---a small noise expansion for the radial component, together with rigorous error estimates in the Skorohod space of right-continuous functions with left limits. 
\end{abstract}


\section{Introduction}\label{S:Introduction}

\color{black} In many scientific and engineering applications, one encounters \textit{systems with impulse effects} ({\sc sie}), where the continuous evolution of the state according to an ordinary differential equation ({\sc ode}) is punctuated by abrupt changes of state, governed by \textit{resetting maps}, at discrete impulse times. An elementary example to keep in mind is a simple mass-spring oscillator which makes intermittent contact with a rigid barrier: the evolution between impacts is governed by a linear second-order {\sc ode} encoding Newton's laws, with the velocity being instantaneously reversed, perhaps with some energy loss, upon collision with the barrier. Such scenarios are, not surprisingly, ubiquitous in mechanical systems; see \cite{Filippov}, \cite{GAP}, \cite{BristolBook}, \cite{Brogliato-book} for a wealth of information about theoretical formulation, novel features like discontinuity-induced bifurcations, and applications (e.g., robotics).

\color{black} Since most realistic systems are subject to uncertainty, either due to external disturbances or imperfect modeling, stochastic process models---frequently, stochastic differential equations ({\sc sde}) driven by Brownian motion---are used to include the effects of random perturbations. Since the latter are typically small, much attention has been devoted to asymptotic techniques like small noise expansions, averaging, and large deviations \cite{FW_RPDS}, \cite{SHS-RPM}, \cite{DZ98}, which allow one to extract simpler approximate models or estimate probabilities of extremely unlikely behavior. While these techniques have been extensively studied for problems with smooth underlying dynamics, the corresponding calculations have not been adequately explored for {\sc sie}.  

\color{black} In a recent work \cite{KhanPahlajani}, small noise expansions were explored for small random perturbations of a reasonably wide class of {\sc ode} subject to \textit{periodic} impulse effects: the framework allowed nonlinearity in both the governing {\sc ode} between impulses and the resetting map at impulses.  
The zeroth- and first-order terms of the expansion were computed (along with rigorous error estimates), and shown to correspond to the limiting mean behavior and typical fluctuations about the mean in the limit of vanishing noise. 
The novel finding in \cite{KhanPahlajani} was the description of the fluctuation process, which involved linearizing the {\sc sde} in the vicinity of the deterministic trajectory in between impulses, similar to \cite{Blagoveshchenskii}, and also a linearization of the resetting map at impulses. 
A key feature that facilitated the analysis was the assumption that the impulse times in the perturbed stochastic system were the \textit{same} as those in the unperturbed deterministic system. 

\color{black} The principal aim of the present work is explore similar questions for small random perturbations of {\sc sie} where impulses arrive, not in a time-dependent (e.g., periodic), but rather \textit{state-dependent}, manner. 
More precisely, impulses are assumed to occur every time the solution of the {\sc ode} (in the deterministic case) or {\sc sde} (in the stochastic case) hits a certain switching surface.
To understand a prototypical problem, suppose we have a smooth vector field $b:\BR^d \to \BR^d$, a smooth codimension one surface $\mathscr{S} \subset \BR^d$, and a continuous map $\Delta:\mathscr{S} \to \BR^d$. 
For a function $x:[0,\infty) \to \BR^d$ which possesses right-hand limits for all $t \ge 0$ and left-hand limits for $t>0$, we define the functions $x^+(t)$ and $x^-(t)$ by $x^+(t)\triangleq \lim_{s \searrow t}x(s)$ for all $t \ge 0$, $x^-(t)\triangleq \lim_{s \nearrow t}x(s)$ for all $t>0$, $x^-(0) \triangleq x(0)$. 
A generic {\sc sie} with state-dependent impulse effects takes the form
\begin{equation}\label{E:sie-generic-det}
\begin{aligned}
\varSigma: \begin{cases}\dot{x}(t) &= b(x(t)) \qquad \text{for $x^-(t) \notin \mathscr{S}$}\\x^+(t) &= \Delta(x^-(t)) \quad \text{for $x^-(t) \in \mathscr{S}$.}\end{cases}
\end{aligned}
\end{equation}
A solution to the {\sc sie} \eqref{E:sie-generic-det} is now a function $x:[0,\infty) \to \BR^d$ which is right-continuous with left limits satisfying the {\sc ode} $\dot{x}=b(x)$ while away from $\mathscr{S}$ and which is reset using the map $x \mapsto \Delta(x)$ at (or more accurately, just before) impact with $\mathscr{S}$; see \cite{GAP}.

\color{black} If the system above is subject to small random perturbations of size $\eps$ with $0<\eps \ll 1$, then the evolution should be governed by a stochastic process $X^\eps_t$ whose paths are right-continuous with left limits, and solves
\begin{equation}\label{E:sie-generic-stoch}
\begin{aligned}
\varSigma^\eps: \begin{cases}dX^\eps_t &= b(X^\eps_t)\thinspace dt + \eps   \sigma^\eps(X^\eps_t) \thinspace dW_t \qquad \text{for $X^{\eps,-}_t \notin \mathscr{S}$}\\X^{\eps,+}_t &= \Delta(X^{\eps,-}_t) \qquad \qquad \qquad \medspace \qquad \text{for $X^{\eps,-}_t \in \mathscr{S}$,}\end{cases} \\
 \text{where} \quad \sigma^\eps(x) = \sigma_0(x) + \eps^{p-1} \sigma_1(x) \quad \text{with $p > 1$.}&
\end{aligned}
\end{equation}
Once again, $X^{\eps,+}_t \triangleq \lim_{s \searrow t}X^\eps_s$ for $t \ge 0$ and $X^{\eps,-}_t \triangleq \lim_{s \nearrow t}X^\eps_s$ for $t>0$, $X^{\eps,-}_0 \triangleq X^\eps_0$ are the right- and left-continuous modifications of $X^\eps_t$, defined path-by-path in the manner described above. 
Similar to the case above, the process $X^\eps_t$ evolves according to the {\sc sde} $dX^\eps_t = b(X^\eps_t)\thinspace dt + \eps   \sigma^\eps(X^\eps_t) \thinspace dW_t$ while away from $\mathscr{S}$, and is reset according to $X^{\eps,+}_t = \Delta(X^{\eps,-}_t)$ just before hitting $\mathscr{S}$.
The function $\sigma^\eps:\BR^d \to \BR^{d\times r}$ is assumed to be sufficiently regular and $W_t$ is an $r$-dimensional Brownian motion on some probability space $(\Omega,\filt,\BP)$.
We have taken the diffusion matrix $\sigma^\eps(x)$ to be of the form $\sigma_0(x) + \eps^{p-1} \sigma_1(x)$ with $p > 1$ since this will enable us, by carefully tuning $\sigma_0$ and $\sigma_1$, to introduce noise at two different scales, viz., $\eps$ and $\eps^p$, in different components of $X^\eps_t$. 

\color{black} To understand the type of questions we would like to ask, let us consider for a moment the equations \eqref{E:sie-generic-det} and \eqref{E:sie-generic-stoch} in the \textit{absence} of impulse effects, i.e., when $\mathscr{S}=\emptyset$, with $\sigma_1(x) \equiv 0$. 
In this setting, classical results \cite{FW_RPDS} imply that for any fixed time horizon $\sfT>0$, the error $\sup_{0 \le t \le \sfT}|X^\eps_t-x(t)|$ converges to zero in probability as $\eps \searrow 0$; this can be viewed as a functional \textit{law of large numbers} ({\sc lln}). 
If the drift and diffusion coefficients are sufficiently smooth, one can next obtain the expansion
$X^\eps_t = x(t) + \eps X^{(1)}_t + \mathscr{O}(\eps^{2})$, along with rigorous error estimates, with the governing equation for $X^{(1)}_t$ being explicitly computable by linearization.
The resulting assertion that $\eps^{-1}(X^\eps_t - x(t))$ converges in distribution to $X^{(1)}_t$ can be interpreted as a functional \textit{central limit theorem} ({\sc clt}). 
In both these cases, the sample paths of $X^\eps_t$ are compared to either $x(t)$ or $x(t)+\eps X^{(1)}_t$ in the space $C([0,\sfT];\BR^d)$ consisting of all continuous functions mapping $[0,\sfT]$ to $\BR^d$, equipped with the the sup norm.
Further, the size of the error predictably involves one power of $\eps$ higher than the largest power retained in the approximation.

\color{black} In a nutshell, we would like to execute a similar program for \eqref{E:sie-generic-det} and \eqref{E:sie-generic-stoch} in the \textit{presence} of impulse effects, i.e., when $\mathscr{S} \neq \emptyset$. 
It is not hard to see that, absent additional simplifying assumptions, such problems pose significant challenges, the most prominent of which is the difficulty of getting good \textit{quantitative} information about the impulse (hitting) times.  
To start, there do not appear to be any analytical techniques to compute the hitting times for a general noiseless system of the form \eqref{E:sie-generic-det}. 
In the presence of noise, one can use the Markovian property of the process $X^\eps_t$ solving \eqref{E:sie-generic-stoch} ``starting afresh" after every resetting to relate the times between impulses to first passage times for small noise diffusion processes.
While the resulting random hitting times may be expected to be close to their deterministic counterparts with high probability (on account of smallness of the noise), one still needs sharp enough estimates for the probabilities of stochastic hitting times deviating from their deterministic counterparts by various amounts.
Finally, we note that the function $x(t)$ and the sample paths of the process $X^\eps_t$ do not belong to $C([0,\sfT];\BR^d)$, but rather to the Skorohod space $D([0,\sfT];\BR^d)$ consisting of all functions mapping $[0,\sfT]$ to $\BR^d$ which are right-continuous with left limits, equipped with the Skorohod metric \cite{ConvProbMeas}, \cite{EK86}.
Questions of convergence should thus be studied in $D([0,\sfT];\BR^d)$, thereby adding an added layer of complexity in comparison to the classical case.\footnote{We note that while the corresponding quantities in \cite{KhanPahlajani} also had paths in $D([0,\sfT];\BR^d)$, we were able to get away with using the sup norm on account of the fact that in \cite{KhanPahlajani}, the hitting times in the deterministic and stochastic contexts were the same.}

\color{black} As a first step towards understanding limit theorems for general systems of the form \eqref{E:sie-generic-det} and \eqref{E:sie-generic-stoch}, and motivated by an analytic example studied in \cite[Section 5.2]{CBC-PoincareBendixson}, we study here a simple problem in polar coordinates. 
This system, described in Section \ref{S:ProblemStatement}, involves adding small Brownian perturbations (in both radial and angular components) to an {\sc sie} governed by an {\sc ode} in a wedge, together with resetting conditions on exiting the wedge. 
The structural assumptions of constant angular velocity and additive noise easily yield expressions for the deterministic impulse times, and allow expressing their stochastic counterparts in terms of first passage times of Brownian motion with drift. 
Assuming that the noise intensity in the angular coordinate vanishes faster than that in the radial coordinate, we are able to obtain a small noise expansion for the radial component, together with rigorous error estimates in the space $D([0,\sfT];\BR)$ with respect to the Skorohod metric.
Using simply the zeroth-order term, we are able to get error estimates comparable to those in the smooth impulse-free case.
However, the error estimates for the first-order expansion are not as sharp as those in the smooth case. 
One reason is the fact that one is comparing quantities with different---but close (with high probability)---discontinuity times in the Skorohod metric. 
An added factor is perhaps a limitation of our technique which, strictly speaking, estimates not exactly the Skorohod distance, but rather an \textit{upper bound} to it, possibly leading to a loss of sharpness.

\subsection*{Some comments regarding notation}
When deriving various estimates, we will denote generic constants by $C$, allowing the exact value of $C$ to  change from line to line. Further, with the \textit{sole exception} of $\eps$ and any other small parameters $\delta=\delta(\eps)$, $\zeta=\zeta(\eps)$ we may encounter, we will absorb the dependence on \textit{all} problem parameters into $C$. Thus, a typical $C$ will depend on all problem parameters (including time horizon $\sfT$), but \textit{not} on $\eps$.


\section{Problem Formulation and Statement of Main Results}\label{S:ProblemStatement}
\subsection{Setting}\label{SS:Setting}
Let $(r,\theta)$ denote polar coordinates in the plane $\BR^2$, and consider the {\sc ode}
\begin{equation}\label{E:ode-polar}
\frac{dr}{dt} = b(r), \qquad 
\frac{d\theta}{dt} =1, 
\end{equation}
where $b:\BR \to \BR$ is a smooth vector field. Next, we fix $\alpha \in (0,2 \pi)$ and let $\mathscr{R}$, $\mathscr{S}$ be the curves in $\BR^2$ given by $\mathscr{R} \triangleq \{(r,\theta):\theta=0\}$ and $\mathscr{S} \triangleq \{(r,\theta):\theta=\alpha\}$, respectively. Let $\Delta:\mathscr{S} \to \mathscr{R}$ be a smooth, possibly nonlinear, function. Our problem of interest can be described as a \textit{system with impulse effects} ({\sc sie}) \cite{GAP}:
\begin{equation}\label{E:sie-det}
\begin{aligned}
\frac{d}{dt}\left[\begin{matrix}r(t)\\\theta(t)\end{matrix}\right] &= \left[\begin{matrix}b(r(t))\\1\end{matrix}\right] \qquad \text{for $(r^-(t),\theta^-(t)) \notin \mathscr{S}$}\\
(r^+(t),\theta^+(t)) &= \Delta(r^-(t),\theta^-(t)) \qquad \text{for $(r^-(t),\theta^-(t)) \in \mathscr{S}$,}\\
(r(0),\theta(0)) &= (r_0,\theta_0) \in \BR^2\setminus\mathscr{S}.
\end{aligned}
\end{equation} 
If the initial conditions satisfy $r_0>0$, $0 \le \theta < \alpha$, then the evolution of $(r(t),\theta(t))$ is restricted to the wedge $\DD \triangleq \{(r,\theta):r \ge 0, 0 \le \theta \le \alpha\}$.

To summarize the foregoing in words, our system of interest evolves according to the {\sc ode} \eqref{E:ode-polar} as long as it is away from $\mathscr{S}$. Upon (or more accurately, just before) hitting the \textit{impact surface} $\mathscr{S}$, the map $\Delta$ resets the state to a point on the \textit{resetting surface} $\mathscr{R}$. The evolution now resumes according to the {\sc ode} \eqref{E:ode-polar} and the cycle continues. Our problem is thus quite similar to one in \cite{CBC-PoincareBendixson}.

We would now like to study small random perturbations of the system \eqref{E:sie-det} given by
\begin{equation}\label{E:sie-stoch}
\begin{aligned}
\left[\begin{matrix}dR^\eps_t \\d\Theta^\eps_t\end{matrix}\right] &= \left[\begin{matrix} b(R^\eps_t)\\1\end{matrix}\right]\thinspace dt + \zeta\left[\begin{matrix}0\\  f(R^\eps_t,\Theta^\eps_t)\end{matrix}\right] \thinspace dt + \left[\begin{matrix}
	\eps & 0\\
      0  & \eps^{p}
\end{matrix}\right] \left[\begin{matrix}dW_t\\ \sigma dB_t\end{matrix}\right] \qquad &&\text{for $(R^{\eps,-}_t,\Theta^{\eps,-}_t) \notin \mathscr{S}$}\\
(R^{\eps,+}_t,\Theta^{\eps,+}_t) &= \Delta(R^{\eps,-}_t,\Theta^{\eps,-}_t) \qquad &&\text{for $(R^{\eps,-}_t,\Theta^{\eps,-}_t) \in \mathscr{S}$,}\\
(R^\eps_0,\Theta^\eps_0) &= (r_0,\theta_0) \in \BR^2\setminus\mathscr{S},
\end{aligned}
\end{equation}
where $f:\BR^2 \to \BR$ is a bounded measurable function with $\|f\|_\infty < 1$, $\sigma \in \{0,1\}$, $W_t$, $B_t$ are independent standard one-dimensional Brownian motions on some probability space $(\Omega,\filt,\BP)$, and $0 < \zeta \ll 1$ is a small parameter with $\zeta = \zeta(\eps) \searrow 0$ as $\eps \searrow 0$. Given the dependence of $\zeta$ on $\eps$, and also in an effort to keep the notation simple, we will not explicitly indicate the $\zeta$ dependence in $R^\eps_t$, $\Theta^\eps_t$. The vector field $b$ and the resetting map $\Delta$ will be assumed to satisfy the following conditions.

\begin{assumption}[Regularity of vector fields]\label{A:vf-wedge}
The function $b:\BR \to \BR$ is bounded and smooth with bounded first and second derivatives. For convenience, we define $\sfK_b\triangleq\sup_{x\in \mathbb{R}} \{|b(x)|, |b'(x)|, |b''(x)| \}$, and note that $b$ satisfies a global Lipschitz condition, with $\sfK_b$ serving as Lipschitz constant. \end{assumption}

\begin{assumption}[Resetting map]\label{A:resetting-map}
The resetting map $\Delta:\mathscr{S} \to \mathscr{R}$ is given by $\Delta(r,\alpha)=(h(r),0)$ where $h:[0,\infty) \to [0,\infty)$ is strictly increasing, differentiable, satisfies $h(0)=0$, and has continuous bounded derivatives. Thus, for every $x>0$, by the Mean Value Theorem, we have
\begin{equation}\label{E:MVT-h}
h(x)=h^\prime(\eta_x)x \quad \text{for some $\eta_x$ between $0$ and $x$.}
\end{equation}
We let $\|h^\prime\|_\infty \triangleq \sup_{r \ge 0}|h^\prime(r)|$ and for convenience,\footnote{See Propositions \ref{P:LLN-comparison}, \ref{P:CLT-comparison} and supporting lemmas.} we set
\begin{equation}\label{E:log-h-prime}
\lhp \triangleq \log(\|h^\prime\|_\infty \vee 1).
\end{equation}
\end{assumption}

For the analysis that follows, we will find it helpful to have an integral equation characterization of $(r(t),\theta(t))$ and $(R^\eps_t,\Theta^\eps_t)$ described above. To this end, we start by fixing an initial condition $(r_0,\theta_0)$ with $r_0>0$, $\theta_0=0$, and recall the form of the resetting map given by Assumption \ref{A:resetting-map}. Then, the desired solution $(r(t),\theta(t))$ to \eqref{E:sie-det} is characterized by the requirement
\begin{multline}\label{E:det-state-int-eq}
r(t) = \sum_{k \ge 1} \ind_{[t_{k-1},t_k)}(t) \left\{r^+(t_{k-1}) + \int_{t_{k-1}}^t b(r(s)) \thinspace ds \right\}, \qquad \theta(t)=\sum_{k \ge 1} \ind_{[t_{k-1},t_k)}(t) (t-t_{k-1}),\\\text{where $t_0 \triangleq 0$, $t_k \triangleq \inf\{t>t_{k-1}:(r^-(t),\theta^-(t)) \in \mathscr{S}\}=k\alpha$ for $k \ge 1$},\\   \text{$r^+(t_{k})=h(r^-(t_{k}))$ for $k \ge 1$, $r^+(t_0)=r_0$.}
\end{multline}
The assertion $t_k=k\alpha$ for $k \ge 1$ is obtained by noting that the angular velocity $\dot{\theta}$ is constant.

Similarly, the stochastic process $(R^\eps_t,\Theta^\eps_t)$ described in \eqref{E:sie-stoch} can be characterized by requiring 
\begin{multline}\label{E:stoch-state-int-eq}
R^\eps_t = \sum_{k \ge 1} \ind_{[\tau^\eps_{k-1},\tau^\eps_k)}(t) \left\{R^{\eps,+}_{\tau^\eps_{k-1}} + \int_{t\wedge \tau^\eps_{k-1}}^t b(R^\eps_s) \thinspace ds  + \eps\left(W_{t}-W_{t\wedge\tau^\eps_{k-1}} \right)  \right\},\\ \Theta^\eps_t = \sum_{k \ge 1} \ind_{[\tau^\eps_{k-1},\tau^\eps_k)}(t) \left\{\int_{t\wedge \tau^\eps_{k-1}}^t \left(1+\zeta f(R^\eps_s,\Theta^\eps_s)\right) \thinspace ds + \eps^p \sigma \left(B_t - B_{t\wedge \tau^\eps_{k-1}}\right)\right\},\\ \text{where $\tau^\eps_0 \triangleq 0$, $\tau^\eps_k \triangleq \inf\{t>\tau^\eps_{k-1}:(R^{\eps,-}_{t},\Theta^{\eps,-}_{t}) \in \mathscr{S}\}$ for $k \ge 1$,}\\ \text{$R^{\eps,+}_{\tau^\eps_k}=h(R^{\eps,-}_{\tau^\eps_k})$ for $k \ge 1$, $R^{\eps,+}_{\tau^\eps_0}=r_0$.} 
\end{multline}
A careful examination of equations \eqref{E:det-state-int-eq} and \eqref{E:stoch-state-int-eq} reveals that $(r(t),\theta(t))$ and the sample paths of the stochastic process $(R^\eps_t,\Theta^\eps_t)$ are \textit{c\`adl\`ag} functions, i.e., right-continuous for all $t \ge 0$ with left limits for $t >0$. Lemma \ref{L:existence} in Section \ref{S:Lemmas} spells out the construction of $(r(t),\theta(t))$ and $(R^\eps_t,\Theta^\eps_t)$ satisfying \eqref{E:det-state-int-eq} and \eqref{E:stoch-state-int-eq}. Note that for $n \ge 1$, the number $t_n$ and the positive random variable $\tau^\eps_n$ correspond, respectively, to the time of $n$-th impact with $\mathscr{S}$ in the deterministic and stochastic cases.

\subsection{Questions of interest and main results}\label{SS:QOI}
Our principal aim is to explore the asymptotic behavior of the stochastic process $(R^\eps_t,\Theta^\eps_t)$ and its convergence to $(r(t),\theta(t))$ in the limit as $\eps \searrow 0$ through the following questions:
\begin{itemize}
\item For a fixed time horizon $\sfT>0$, do the trajectories of the stochastic process $(R^\eps_t,\Theta^\eps_t)$ convergence in a suitable sense to $(r(t),\theta(t))$ in the limit as $\eps \searrow 0$?
\item If yes, can one obtain a more refined approximation to ($R^\eps_t, \Theta^\eps_t)$ of the form $(r(t)+\eps R^{(1)}_t, \theta(t)+ \eps \Theta^1_t)$ where the fluctuation process $(R^{(1)}_t, \Theta^1_t)$ captures the effect of the noise to leading order?
\end{itemize}
Since the stochastic processes in question have \textit{c\`adl\`ag} paths, issues of convergence over the time horizon $[0,\sfT]$ with $\sfT>0$ are best studied in the space $D([0,\sfT];\BR^2)$ of functions from $[0,\sfT]$ into $\BR^2$ which are right-continuous with left limits, equipped with the Skorohod topology \cite{ConvProbMeas, EK86}. To describe the metric and topological structure of this function space, we briefly consider a slightly more general setting.

Let $(S,\rho)$ be a complete separable metric space, and for $\sfT>0$, let $D([0,\sfT];S)$ denote the space of all functions $x:[0,\sfT] \to S$ which are right-continuous for all $t \ge 0$ and possess left limits for $t>0$. We start by defining a preliminary family of time distortions $\tilde{\Lambda}_\sfT$ consisting of all strictly increasing continuous bijections on $[0,\sfT]$. Next, we let $\Lambda_\sfT$ be the family of all $\lambda \in \tilde{\Lambda}_\sfT$ for which
\begin{equation}\label{E:time-distortion-cost}
\gamma_\sfT(\lambda) \triangleq \sup_{0 \le s <t \le \sfT} \left|\log \frac{\lambda(t)-\lambda(s)}{t-s}\right| <\infty.
\end{equation}
Note that the identity function $\itd(t) \equiv t$ belongs to $\Lambda_\sfT$, and we have $\gamma_\sfT(\itd)=0$. The \textit{Skorohod metric} $d_{[0,\sfT]}$ on $D([0,\sfT];S)$ is now defined by setting 
\begin{equation}\label{E:Sk-metric}
d_{[0,\sfT]}(x_1,x_2) \triangleq \inf_{\lambda \in \Lambda_\sfT}\left\{ \gamma_\sfT(\lambda) \vee \sup_{0 \le t \le \sfT} \rho\left(x_1(t),x_2(\lambda(t))\right) \right\}, \qquad \text{for $x_1$, $x_2 \in D([0,\sfT];S)$.}
\end{equation}
The intuition behind the Skorohod metric is that two functions $x_1$ and $x_2$ in $D([0,\sfT];S)$ should be close if, after allowing a small distortion of time (with ``smallness" captured by $\gamma_\sfT(\lambda)$), the trajectories of $x_1(\cdot)$ and $x_2(\lambda(\cdot))$ are uniformly close on $[0,\sfT]$. We also note that if $d^u_{[0,\sfT]}(x_1,x_2) \triangleq \sup_{0 \le t \le \sfT} \rho(x_1(t),x_2(t))$ denotes the uniform metric on $D([0,\sfT];S)$, then $d_{[0,\sfT]}(x_1,x_2) \le d^u_{[0,\sfT]}(x_1,x_2)$ since the latter corresponds to the expression inside the infimum in \eqref{E:Sk-metric} for the specific choice $\lambda = \itd$.

\begin{definition}[Time horizon]
Fix $\sfN \in \BN$. The time horizon for our problem will be $[0,\sfT]$ with $\sfT\in (\sfN \alpha, (\sfN+1)\alpha)$.
\end{definition}

Let $x(t) \triangleq (r(t),\theta(t))$, $X^\eps_t=(R^\eps_t,\Theta^\eps_t)$. Our first main result, stated as Theorem \ref{T:LLN} below, asserts that $d_{[0,\sfT]}(X^\eps,x) \to 0$ in $L^\beta$ as $\eps \searrow 0$.

\begin{theorem}[Law of large numbers]\label{T:LLN}
Suppose that $f \equiv 0$ and $\sigma=1$. 
 Let $x(t)$ and  $X^\varepsilon_t$ solves the systems $\eqref{E:sie-det}$ and $\eqref{E:sie-stoch}$ respectively, and fix $\beta \in \{1,2\}$. Fix $p>1$ and $\sfT>0$. For given $\eps\in(0,1)$, sufficiently small, there exists a constant $C>0$ such that, \begin{equation*}
     \mathbb{E}\left[d_{[0,\sfT]}(X^\eps,x)^\beta\right]\le C \eps^\beta
 \end{equation*}
\end{theorem}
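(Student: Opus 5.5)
We construct an explicit piecewise-linear time change matching the stochastic impulse times to the deterministic ones, and then control the uniform distance after this time change. With $f\equiv0$ and $\sigma=1$, on each excursion $\Theta^\eps_t = (t-\tau^\eps_{k-1}) + \eps^p(B_t-B_{\tau^\eps_{k-1}})$. Hence the inter-impulse times $\tau^\eps_k-\tau^\eps_{k-1}$ are i.i.d. first passage times of $\alpha$ by a Brownian motion with unit drift and diffusion $\eps^p$. This inverse Gaussian law has mean $\alpha$, variance $\alpha\eps^{2p}$, and exponential moments. We work on the good event
\[
G_\eps=\Bigl\{\max_{k\le \sfN+1}|\tau^\eps_k-k\alpha|\le \delta\Bigr\},
\]
with $\delta$ small, say $\delta<\tfrac14\min(\alpha, \sfT-\sfN\alpha,(\sfN+1)\alpha-\sfT)$. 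On $G_\eps$ exactly $\sfN$ impulses occur before $\sfT$ in both systems. Off $G_\eps$ we use a crude bound. Since $d_{[0,\sfT]}\le d^u_{[0,\sfT]}$, and $R^\eps$ and $r$ have moments bounded uniformly in $\eps$ (via the linear growth of $h$ from \eqref{E:MVT-h}, boundedness of $b$, and a bound on the number of impulses in $[0,\sfT]$, which has geometric tails), Cauchy--Schwarz gives
\[
\BE\bigl[d^\beta\,\ind_{G_\eps^c}\bigr]\le C\,\BP(G_\eps^c)^{1/2}.
\]
This is superpolynomially small by Gaussian tail estimates for $\eps^p\sup|B|$.

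On $G_\eps$, let $\lambda$ be the piecewise linear bijection of $[0,\sfT]$ with $\lambda(t_k)=\tau^\eps_k$ for $k\le\sfN$ and $\lambda(\sfT)=\sfT$. Its slopes are $(\tau^\eps_k-\tau^\eps_{k-1})/\alpha$, so
\[
\gamma_\sfT(\lambda)\le C\max_k|\tau^\eps_k-t_k|.
\]
Moreover $|\tau^\eps_k-t_k|\le \sum_{j\le k}|\tau^\eps_j-\tau^\eps_{j-1}-\alpha|$, and each summand has $L^\beta$ norm $O(\eps^p)$. The angular component is then easy: $\Theta^\eps_{\lambda(t)}-\theta(t)$ is controlled by the slope error plus $\eps^p\sup|B|$, which is $O(\eps^p)$ in $L^\beta$.

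For the radial component we fix the interval $[t_{k-1},t_k)$ and write $\Delta_k=\sup_{[t_{k-1},t_k)}|R^\eps_{\lambda(t)}-r(t)|$. The integral equation \eqref{E:stoch-state-int-eq} gives
\[
R^\eps_{\lambda(t)}-r(t)=\bigl(R^{\eps,+}_{\tau^\eps_{k-1}}-r^+(t_{k-1})\bigr)+\int_{\tau^\eps_{k-1}}^{\lambda(t)}b(R^\eps_s)\,ds-\int_{t_{k-1}}^t b(r(s))\,ds+\eps\bigl(W_{\lambda(t)}-W_{\tau^\eps_{k-1}}\bigr).
\]
We change variables $s=\lambda(u)$ in the first integral. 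Using $|b|\le\sfK_b$, the Lipschitz property, and $|\lambda'-1|\le C\gamma$, Gronwall yields
\[
\Delta_k\le C\bigl(|R^{\eps,+}_{\tau^\eps_{k-1}}-r^+(t_{k-1})|+\gamma_\sfT(\lambda)+\eps\sup_{s\le \sfT+1}|W_s|\bigr).
\]
Since $r^+(t_k)=h(r^-(t_k))$ and $\lambda$ maps left limits at $t_k$ to left limits at $\tau^\eps_k$, the reset error is at most $\|h'\|_\infty\Delta_k$ (this is where $\lhp$ enters, as a factor $e^{\lhp k}$). Iterating over $k\le\sfN+1$ gives
\[
\sup_t|R^\eps_{\lambda(t)}-r(t)|\le C\bigl(\gamma_\sfT(\lambda)+\eps\sup|W|\bigr).
\]
Combining both components, $d_{[0,\sfT]}(X^\eps,x)\ind_{G_\eps}\le C(\max_k|\tau^\eps_k-t_k|+\eps\sup|W|)$. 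Raising to the power $\beta$ and taking expectations gives $C(\eps^{p\beta}+\eps^\beta)\le C\eps^\beta$, since $p>1$.

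The main obstacle is the bookkeeping around the discontinuities. We must check that on $G_\eps$ the time change aligns the jumps exactly, including near the endpoint $\sfT$ where no impulse is matched, and that the inverse Gaussian moment bounds on $\tau^\eps_k-t_k$ hold uniformly. Lemma \ref{L:existence} (construction of the solutions) and standard first passage facts should suffice for both.
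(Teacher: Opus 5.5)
Your proof is correct, and its architecture matches the paper's. Both use a piecewise-linear time change sending $t_k$ to $\tau^\eps_k$ on a good event. On that event, both apply Gronwall on each inter-impulse interval and the Lipschitz bound for $h$ at resets. Off it, both use $d_{[0,\sfT]}\le d^u_{[0,\sfT]}$, moment bounds uniform in $\eps$, and Cauchy--Schwarz.

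You genuinely differ in how you control the cost of the time change. The paper shrinks the window to $\delta=\eps^\nu$ with $1<\nu<p$, so that on $G^\eps_\sfN$ the deterministic bound $\|J^\eps_\cdot-1\|_\infty\le C\delta$ of Lemma~\ref{L:Jacobian-Estimates} holds. It then has to tune $\nu$ so that $\delta^\beta\le\eps^\beta$, while $\BP(\Omega\setminus G^\eps_\sfN)$, estimated through the inverse Gaussian tails of Lemma~\ref{L:FPT}, stays superpolynomially small. You keep $\delta$ fixed, bound $\gamma_\sfT(\lambda)$ by the random quantity $\max_k|\tau^\eps_k-t_k|$, and use $\BE|\tau^\eps_k-t_k|^2=k\alpha\eps^{2p}$. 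This removes the tuning, gives the sharper $O(\eps^p)$ for the time-change cost, and makes $\BP(G_\eps^c)$ exponentially small in $\eps^{-2p}$. The price is modest: you use the variance of the inter-impulse times rather than only their tails.

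Including $\tau^\eps_{\sfN+1}$ in your good event is also a real gain. It guarantees no impulse occurs in $(\tau^\eps_\sfN,\sfT]$. The representation \eqref{E:R-time-changed} on $[t_\sfN,\sfT]$ needs this, but the paper's explicitly constructed good set constrains only $n\le\sfN$ and does not enforce it.

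One phrase needs sharpening. A geometric tail for $\Ren^\eps_\sfT$ at some fixed rate is not enough. After H\"older you need $\BE\bigl[e^{c\lhp\Ren^\eps_\sfT}\bigr]$ bounded uniformly in $\eps$, for a multiple $c$ of $\beta$ (e.g. $c=4\beta$), with a little room to spare for the polynomial factor in $\Ren^\eps_\sfT$. This does hold. Indeed $\BP(\tau^\eps_n\le\sfT)\le e^{\lambda\sfT}\bigl(\BE e^{-\lambda\tau^\eps_1}\bigr)^n$, and for $\eps<1$ one has $\BE e^{-\lambda\tau^\eps_1}\le\exp\bigl(-2\alpha\lambda/(1+\sqrt{1+2\lambda})\bigr)$, which is as small as you like for large $\lambda$; this is the content of Lemma~\ref{L:Renewal-Process}.

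Finally, your combined bound on $G_\eps$ should keep the angular term $\eps^p\sup|B|$. It is harmless.
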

 
Let $Z_t \triangleq (R^1_t,\Theta^1_t)$ be the process solving 
\begin{equation}\label{E:fluct-proc}
\begin{aligned}
R^1_t &\triangleq \sum_{k \ge 1} \ind_{[t_{k-1},t_k)}(t) \left\{R^{1,+}_{t_{k-1}} + \int_{t_{k-1}}^t b^\prime(r(s)) R^1_s \thinspace ds +(W_t-W_{t_{k-1}}) \right\},\\
\Theta^1_t &\triangleq 0 
\\
\text{where }
R^{1,+}_{t_k}&=h^\prime(r^-(t_k)) R^{1,-}_{t_k} \quad \text{for $k \ge 1$}, \qquad R^{1,+}_0 \triangleq 0, \qquad t_k=k\alpha \quad \text{for $k \in \BZ^+$}.
\end{aligned}
\end{equation}

\begin{remark}\label{R:fluct-proc}
Note that the process $(R^1_t,\Theta^1_t)$ is c\`adl\`ag with jumps at the deterministic impact times $t_k=k\alpha$, $k \ge 1$. In between jumps, the process $(R^1_t,\Theta^1_t)$ solves the {\sc sde} $dR^1_t = b^{\prime}(r(t))R^1_t \ dt+ dW_t$, $d\Theta^1_t =0$ with resetting condition $R^{1,+}_{t_k}=h^\prime(r^-(t_k)) R^{1,-}_{t_k}$, $\Theta^{1,+}_{t_k} = 0$. One observes that both the {\sc sde} and the resetting condition for $R^1_t$ are non-autonomous (on account of dependence on $r(t)$) and involve a \textit{linearization} about the deterministic trajectory $r(t)$ of the (potentially nonlinear) vector field $b$ and resetting condition $x \mapsto h(x)$. 
\end{remark}

\color{black} Theorem \ref{T:LLN} ensures that for any fixed $\sfT>0$, the quantity $d(X^\varepsilon, x),$ converges to $0$ in $L^p$. Given that $d(X^\varepsilon, x)$ may be thought of as being of order $\mathcal{O}(\varepsilon)$, it is natural to ask whether one can identify any of the higher order terms in an expansion of $X^\varepsilon_t$ in powers of the small parameter$\varepsilon$. We would like to identify a stochastic process $Z_t$, independent of $\varepsilon$, such that $X^\varepsilon_t=x_t+\varepsilon Z_t+o(\varepsilon)$, where we can explicitly determine the error $d(X^\varepsilon, x+\varepsilon Z). $\color{black}

\begin{theorem}[Central limit theorem]\label{T:CLT}
Suppose that $f \equiv 0$ and $\sigma=1$. 
Let $x(t)$ and  $X^\varepsilon_t$ solves the systems $\eqref{E:sie-det}$ and $\eqref{E:sie-stoch}$ respectively. Fix $p>1$ and $\sfT>0$. For given $\eps\in(0,1)$, sufficiently small and $1<\nu<p$, fixed, there exists a constant $C>0$ such that,
 \begin{equation*}
     \mathbb{E}\left[d_{[0,\sfT]}(X^\eps,x+\eps Z)\right]\le\begin{cases}
     	& C \eps^\nu, \quad \text{when $\nu< p\le 2$}\\
     	& C \eps^2, \quad \text{when $2<\nu<p$}
     \end{cases} 
 \end{equation*}
\end{theorem}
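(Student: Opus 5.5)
With $f\equiv0$, $\sigma=1$, the angle obeys $\Theta^\eps_t=(t-\tau^\eps_{k-1})+\eps^p(B_t-B_{\tau^\eps_{k-1}})$ on $[\tau^\eps_{k-1},\tau^\eps_k)$. Hence $\tau^\eps_k-\tau^\eps_{k-1}$ is the first passage time of $s+\eps^p\tilde B_s$ to level $\alpha$, where $\tilde B$ is a Brownian motion started afresh at $\tau^\eps_{k-1}$. These increments are i.i.d., independent of $W$, and have inverse Gaussian law with mean $\alpha$. Their fluctuation is of order $\eps^p$, and they have exponential moments of all orders.

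The plan is to bound $d_{[0,\sfT]}$ from above using one explicit piecewise-linear time change $\lambda^\eps$. We set $\lambda^\eps(t_k)=\tau^\eps_k$ for $k\le \sfN$ and $\lambda^\eps(\sfT)=\sfT$, and interpolate linearly in between. This is done on the event $G_\eps=\{\max_{k\le \sfN+1}|\tau^\eps_k-t_k|\le \delta\}$, where $\delta=\delta(\eps)=\eps^{\nu}$ with $\nu<p$; the event guarantees that $\lambda^\eps$ is increasing. Then $\gamma_\sfT(\lambda^\eps)\le C\max_k|\tau^\eps_k-t_k|$. The angular components $\theta(t)$ and $\Theta^\eps_{\lambda^\eps(t)}$ then have jumps at the same times. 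Their difference is bounded by $C\max_k|\tau^\eps_k-t_k|+\eps^p\sup|B|$, and $\Theta^1\equiv0$.

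On $G_\eps^c$ we use the crude bound $d\le \sup|R^\eps|+\sup|r|+\eps\sup|Z|+C$. Since $b$ is bounded, between impacts $|R^\eps|\le |R^{\eps,+}_{\tau^\eps_{k-1}}|+C\,\cdot\,$(length)$+2\eps\sup|W|$. Combined with $|h(x)|\le\|h'\|_\infty|x|$, this gives $\sup_{t\le \sfT}|R^\eps_t|\le C e^{\lhp \mathcal N}(1+\sup|W|)$, where $\mathcal N$ is the number of impacts before $\sfT$. This is where $\lhp$ enters. The sum $\tau^\eps_n$ of $n$ i.i.d.\ first passage times exceeds $\sfT$ unless a large deviation occurs, so $\BP(\mathcal N\ge n)$ decays like $\exp(-c n^2/\eps^{2p})$-type bounds for $n>\sfN+1$. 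Together with Cauchy--Schwarz, this makes the contribution of $G_\eps^c$ at most $C\BP(G^c_\eps)^{1/2}$. By Gaussian-type tails of the inverse Gaussian, $\BP(G_\eps^c)\le C\exp(-c\eps^{2\nu-2p})$, which is $o(\eps^m)$ for every $m$.

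The main work is on $G_\eps$: we estimate $\sup_t|R^\eps_{\lambda^\eps(t)}-r(t)-\eps R^1_t|$ inductively over cycles $k=1,\dots,\sfN+1$. We write $E_k=R^{\eps,+}_{\tau^\eps_{k-1}}-r^+(t_{k-1})-\eps R^{1,+}_{t_{k-1}}$. Within a cycle we compare $R^\eps$ on $[\tau^\eps_{k-1},\tau^\eps_{k-1}+s]$ with $r$ on $[t_{k-1},t_{k-1}+s]$. Taylor expansion of $b$ to second order (using $\sfK_b$) and Gronwall give an error of
\[
C\bigl(|E_k|+\eps^2\sup|R^1|^2+\eps\,\omega_W(\delta)+|\tau^\eps_{k-1}-t_{k-1}|\bigr).
\]
Here $\omega_W$ is the modulus of continuity of $W$; it accounts for the Brownian increments being taken over shifted windows, and the last term accounts for the mismatch in cycle lengths. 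At the jump we use $h(x+y)=h(x)+h'(x)y+o(y)$. Since $h'$ is only continuous, I would first try to assume, or extract, a Lipschitz bound on $h'$ so that the remainder is $O(y^2)$. This gives $|E_{k+1}|\le \|h'\|_\infty\cdot(\text{previous error})+C\eps^2(\cdots)$. The linear interpolation of $\lambda^\eps$ inside each cycle adds a rescaling error of order $|\tau_k-t_k|+|\tau_{k-1}-t_{k-1}|$.

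Taking expectations, the cycle-length terms contribute $\BE\max|\tau_k-t_k|\le C\eps^p$, and $\eps\,\BE\,\omega_W(\eps^\nu)\le C\eps^{1+\nu/2}\sqrt{\log(1/\eps)}$. The quadratic terms contribute $C\eps^2$. Balancing the choice of $\delta$ against these terms produces the rate $\eps^{\nu}$ when $p\le2$ and $\eps^2$ when $p>2$. The expected obstacle is precisely this step: controlling the Brownian increments over the random, mismatched windows uniformly in $t$, and closing the induction with moment bounds that hold on $G_\eps$ only. If the modulus-of-continuity term turns out too large, I would instead realign the Brownian motion by writing $W_{\lambda^\eps(t)}-W_{\tau^\eps_{k-1}}$ versus $W_t-W_{t_{k-1}}$ and bounding this difference by Lévy's modulus with window $\max_k|\tau^\eps_k-t_k|$, which is $O(\eps^p)$ in $L^q$. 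The outcome is the stated dependence on $\nu<p$.
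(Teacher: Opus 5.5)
Your proposal is correct and follows essentially the same route as the paper. It uses the same piecewise-linear time change $\lambda^\eps$ aligning $t_k$ with $\tau^\eps_k$ on a good event with $\delta=\eps^\nu$. It uses the same cycle-by-cycle second-order Taylor expansion of $b$ and $h$ closed by Gronwall; the paper also tacitly uses a bound on $h''$. On the bad event it uses the same crude $e^{\lhp \Ren^\eps_\sfT}$ growth bound, exponential moments of the impact count, Cauchy--Schwarz, and the super-polynomial smallness of $\BP(\Omega\setminus G^\eps_\sfN)$.

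One imprecision: the quadratic remainder is really $\sup_t|R^\eps_{\lambda^\eps_t}-r(t)|^2$, not $\eps^2\sup|R^1|^2$. The paper controls it through Theorem~\ref{T:LLN} with $\beta=2$, and you should do the same.

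Your deviations are refinements rather than a different argument. You bound the time-change and angular terms in expectation by $C\eps^p$ instead of deterministically by $C\delta$. You use L\'evy's modulus instead of Fernique/H\"older-norm moments for $\eps\sup_{|t-s|\le\delta}|W_t-W_s|$. You include $\tau^\eps_{\sfN+1}$ in the good event, so no extra impact occurs in $[\tau^\eps_\sfN,\sfT]$ --- a point the paper's definition of $G^\eps_\sfN$ leaves implicit.
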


\color{black}Before proceeding with the proofs of Theorems \ref{T:LLN} and \ref{T:CLT}, let us first outline our strategy. Recalling the discussion about the function space $D([0,\sfT];\BR^2)$ in Section \ref{S:ProblemStatement}, we see that a sample path of $X^\eps_t = (R^\eps_t,\Theta^\eps_t)$ should be close to $x(t) = (r(t),\theta(t))$, or $x(t) + \eps Z_t$ for that matter, in $D([0,\sfT];\BR^2)$ if the impact times in the stochastic case are close to their deterministic counterparts, and after allowing this small mismatch in times, the state trajectories follow roughly the same ``spatial" course.
To this end, we will compare the sample paths of $X^\eps_t$ with $x(t)$ or $x(t) + \eps Z_t$ using piecewise linear time distortions which \textit{align} the jumps in the stochastic and deterministic cases with high probability, obtaining in the process upper bounds for the true Skorohod distance between $(R^\eps_t,\Theta^\eps_t)$ and $(r(t),\theta(t))$. Showing that these upper bounds are small in an $L^p$ sense for $0<\eps \ll 1$ will follow from using the smallness of the noise, the continuity of the resetting map, and showing closeness of $\tau^\eps_n$ and $t_n$ with high probability. While the exact form of the time distortion we will use will depend on the nature of the noise in the angular coordinate $\Theta^\eps_t$, it will always be of the form given below, with suitable $G^\eps_\sfN$.

\begin{definition}[Random time distortion]\label{D:rtd}
For $\eps \in (0,1)$, let $G_\sfN^\eps \in \filt$ be an event satisfying, among perhaps other assumptions, the requirement that for all $\omega \in G_\sfN^\eps$, we have $|\tau^\eps_n(\omega)-t_n| < \alpha/4$ for all $0 \le n \le \sfN$
. Define now the random time distortion $\lambda^\eps$ by
\begin{equation}\label{E:rtd-generic}
\lambda^\eps_t(\omega) \triangleq \begin{cases}\sum_{k=1}^{\sfN} \ind_{[t_{k-1}, t_{k})} (t) \left\{\tau^{\varepsilon}_{k-1} (\omega) + \left(\frac{\tau^\varepsilon_{k}(\omega)-\tau^{\varepsilon}_{k-1} (\omega)}{t_{k} - t_{k-1}}\right) (t- t_{k-1}) \right\}\\ \qquad  + \ind_{[t_{\sfN}, \sfT]}(t) \left\{\tau^\varepsilon_{\sfN}(\omega) + \left(\frac{\sfT-\tau^\varepsilon_{\sfN}(\omega)}{\sfT - t_{\sfN}}\right) (t- t_{\sfN})\right\} \qquad &\text{for $t \in [0,\sfT]$, $\omega \in G^\eps_\sfN$}\\t &\text{for $t \in [0,\sfT]$, $\omega \in \Omega\setminus G^\eps_\sfN$.}\end{cases}
\end{equation}
\end{definition}

Noting that each $t \mapsto \lambda^\eps_t(\omega)$ is differentiable a.e. and recalling that $t_k-t_{k-1}=\alpha$ for all $1\le k \le \sfN$, let $J^\eps$ be the function
\begin{equation}\label{E:J-generic}
J^\eps_{t}(\omega) \triangleq \begin{cases}\sum_{k=1}^{\sfN} \ind_{[t_{k-1}, t_{k})} (t) \left(\frac{\tau^\varepsilon_{k}(\omega)-\tau^{\varepsilon}_{k-1} (\omega)}{\alpha}\right)  + \ind_{[t_{\sfN}, \sfT]}(t) \left(\frac{\sfT-\tau^\varepsilon_{\sfN}(\omega)}{\sfT-\sfN\alpha}\right) \quad &\text{for $t \in [0,\sfT]$, $\omega \in G^\eps_\sfN$}\\1 &\text{for $t \in [0,\sfT]$, $\omega \in \Omega\setminus G^\eps_\sfN$}\end{cases}
\end{equation}
which equals $\frac{d}{dt}\lambda^\eps_t(\omega)$ for a.e. $t \in [0,\sfT]$, for all $\omega \in \Omega$. Note also that for $\omega \in G^\eps_\sfN$, the time deformation $\lambda^\eps_\cdot(\omega)$ aligns all impact times in $[0, \sfT]$, i.e., $\lambda^\eps_{t_k}(\omega)=\tau^\eps_{k}(\omega)$ for $0 \le k \le \sfN$. The requirement of the time distortion being a bijection on $[0,\sfT]$ forces $\lambda^\eps_{\sfT}(\omega)=\sfT$.
Before proceeding with the detailed calculations, it will be helpful to introduce quantities $\ren(t)$ and $\Ren^\eps_t$ which count the total number of impacts up to (and including) time $t \in [0,\infty)$ in the deterministic and stochastic settings. In the former case, the impacts occur at times $t_n=n\alpha$, $n \in \BZ^+$, while in the latter case the impact times are given by the increasing sequence of positive random variables $\{\tau_n^\eps\}_{n=0}^\infty$. Thus, the total number of impacts that have occurred by time $t$ in these two cases are given, respectively, by the function $\ren(t)$  and the process $\Ren^\eps_t$ defined by 
\begin{equation}\label{E:renRen}
\ren(t) = \lfloor \frac{t}{\alpha} \rfloor, \qquad \Ren^\eps_t \triangleq \max\{k \in \BZ^+:\tau^\eps_k \le t\}.
\end{equation}
Owing to the fact that our system starts ``afresh" after every impact, the times \textit{between} impacts, viz., $\{\tau^\eps_n - \tau^\eps_{n-1}\}_{n=1}^\infty$, form a family of independent identically distributed random variables, thereby making $\Ren^\eps_t$ a renewal process. Our next result, Lemma \ref{L:FPT}, provides the density of $\tau^\eps_n-\tau^\eps_{n-1}$, and perhaps more importantly, provides upper bounds on the quantity $\BP\left(|\tau^\eps_n - \tau^\eps_{n-1} - \alpha| \ge \delta\right)$ for $\delta \in (0,1)$ small enough.

\begin{lemma}\label{L:FPT}
Consider the case when $f \equiv 0$ and $\sigma=1$. Then, the density $f^\eps(t)$ of $\tau^\eps_n-\tau^\eps_{n-1}$ for each $n \ge 1$ (and hence of $\tau^\eps_1$) is given by
\begin{equation}\label{E:FPT-pdf}
\BP\left(\tau^\eps_n - \tau^\eps_{n-1} \in dt\right) = f^\eps(t) \thinspace dt \triangleq \frac{\alpha}{\eps^p \sqrt{2\pi t^3}} \exp\left[-\frac{1}{2\eps^{2p} t}(t-\alpha)^2\right] \thinspace dt, \qquad \text{for $t>0$.}
\end{equation}
Further, there exists $K_{\ref{L:FPT}}>0$ such that for all $0 < \delta < \min\{1,\alpha/2\}$, we have
\begin{equation}\label{E:FPT-estimates}
\BP\left(|\tau^\eps_n - \tau^\eps_{n-1} - \alpha| \ge \delta\right) \le K_{\ref{L:FPT}}\frac{\eps^p}{\delta} \exp\left(-\frac{\delta^2}{4\alpha\eps^{2p}}\right).
\end{equation}
\end{lemma}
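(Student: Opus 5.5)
With $f\equiv 0$ and $\sigma=1$, the angular equation in \eqref{E:stoch-state-int-eq} decouples from the radial one. On each inter-impact interval the process satisfies $\Theta^\eps_t = (t-\tau^\eps_{n-1}) + \eps^p (B_t - B_{\tau^\eps_{n-1}})$, and $\Theta^\eps$ is reset to $0$ at $\tau^\eps_{n-1}$. So $\tau^\eps_n-\tau^\eps_{n-1}$ is the first time the process $s\mapsto s+\eps^p \tilde B_s$ reaches level $\alpha$, where $\tilde B_s \triangleq B_{\tau^\eps_{n-1}+s}-B_{\tau^\eps_{n-1}}$. Since $\tau^\eps_{n-1}$ is a stopping time for the filtration generated by $(W,B)$, the strong Markov property makes $\tilde B$ a standard Brownian motion independent of $\filt_{\tau^\eps_{n-1}}$. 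It follows that the increments are i.i.d. with the law of $T\triangleq\inf\{s: s+\eps^p\tilde B_s=\alpha\}$.

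For the density, I divide by $\eps^p$. Then $T$ is the first time the process $\tilde B_s + \mu s$, with $\mu=\eps^{-p}$, hits the level $a=\alpha\eps^{-p}$. The classical first-passage density for Brownian motion with positive drift (from the reflection principle plus Girsanov, or the Laplace transform $\BE e^{-\lambda T}=e^{a(\mu-\sqrt{\mu^2+2\lambda})}$) is
\[
\frac{a}{\sqrt{2\pi s^3}}\exp\left[-\frac{(a-\mu s)^2}{2s}\right].
\]
Substituting $a$ and $\mu$ gives exactly \eqref{E:FPT-pdf}, since $(a-\mu s)^2=\eps^{-2p}(\alpha-s)^2$.

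For the tail bound, I split the event into the right tail $\{T\ge\alpha+\delta\}$ and the left tail $\{T\le\alpha-\delta\}$. I would bound each tail directly from the hitting-time description rather than by integrating the density.

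For the right tail, $T\ge\alpha+\delta$ implies $(\alpha+\delta)+\eps^p\tilde B_{\alpha+\delta}\le\alpha$, that is, $\tilde B_{\alpha+\delta}\le-\delta\eps^{-p}$. The Gaussian tail estimate $\BP(N\ge x)\le \frac{1}{x\sqrt{2\pi}}e^{-x^2/2}$ then gives
\[
\BP(T\ge \alpha+\delta) \le C\frac{\eps^p}{\delta}\exp\left(-\frac{\delta^2}{2(\alpha+\delta)\eps^{2p}}\right).
\]
Because $\delta<\alpha/2$, we have $\alpha+\delta<2\alpha$, so the exponent is at most $-\delta^2/(4\alpha\eps^{2p})$.

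For the left tail, $T\le\alpha-\delta$ implies $\sup_{s\le\alpha-\delta}(s+\eps^p\tilde B_s)\ge\alpha$, which forces $\sup_{s\le\alpha}\tilde B_s\ge\delta\eps^{-p}$. By the reflection principle this has probability $2\BP(\tilde B_\alpha\ge\delta\eps^{-p})\le C\frac{\eps^p}{\delta}e^{-\delta^2/(2\alpha\eps^{2p})}$, which is better than what is needed.

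Adding the two tails yields \eqref{E:FPT-estimates}. The constant $K_{\ref{L:FPT}}$ depends only on $\alpha$, via $\sqrt{2\alpha}$ and similar factors.

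The step I expect to need the most care is verifying the restarting argument precisely. Specifically, $\Theta^\eps$ starts at exactly $0$ after each reset, because $\Delta$ maps to $\mathscr{R}$ and $\theta_0=0$, and $\tau^\eps_n$ is a genuine stopping time. This is what makes the Brownian increments after $\tau^\eps_{n-1}$ fresh. The constants in the tail bounds are routine once this is settled.
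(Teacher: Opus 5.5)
Your proof is correct. For the density it follows the paper: you reduce to the first passage of $\tilde B_s+s/\eps^p$ to level $\alpha/\eps^p$ and quote the classical formula. You also justify the restart at $\tau^\eps_{n-1}$ via the strong Markov property, which the paper only asserts.

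For the tail estimate your route is genuinely different. The paper identifies $\tau^\eps_n-\tau^\eps_{n-1}$ as inverse Gaussian with $\hat\mu=\alpha$, $\lambda=\alpha^2/\eps^{2p}$. It inserts this into Shuster's closed-form distribution function (Proposition~\ref{P:CDF-IG}) and bounds the $\chi^2_1$ tails by $G(z)\le 2(2\pi z)^{-1/2}e^{-z/2}$. On the left tail this requires checking that the large factor $e^{2\alpha/\eps^{2p}}$ cancels against $G(a+4\alpha/\eps^{2p})$. The hypothesis $\delta<1$ is then used to absorb a leftover $\eps^p$ term into $\eps^p/\delta$.

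You instead dominate each tail by a single Gaussian event: $\{\tilde B_{\alpha+\delta}\le-\delta\eps^{-p}\}$ on the right and $\{\sup_{s\le\alpha}\tilde B_s\ge\delta\eps^{-p}\}$ on the left. You then need only the Mills-ratio bound and the reflection principle. Nothing is lost. Your exponents, $\delta^2/(2(\alpha+\delta)\eps^{2p})$ and $\delta^2/(2\alpha\eps^{2p})$, are the ones the paper's computation produces (compare \eqref{E:FPT-Estimate-1} and \eqref{E:FPT-Estimate-2}) before weakening to $4\alpha$ in \eqref{E:FPT-estimates}.

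What each route buys:
\begin{itemize}
\item The paper's route gives the exact law of the inter-impact time. That would matter if one wanted sharp two-sided asymptotics.
\item Your route is self-contained and does not need $\delta<1$.
\item Because your route uses only pathwise comparison, it adapts to a non-constant angular drift such as $1+\zeta f$ in \eqref{E:sie-stoch}, where no closed-form inverse Gaussian law is available. One sandwiches the angular process between $s(1\pm\zeta\|f\|_\infty)+\eps^p\tilde B_s$, at the cost of shifting $\delta$ by $O(\zeta)$.
\end{itemize}
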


To prove this lemma, we will use some results about inverse Gaussian random variables. 
\begin{proposition}[\cite{shuster1968inverse}]\label{P:CDF-IG}\,
Suppose $X$ is a positive random variable with an inverse Gaussian distribution with parameters $\hat{\mu} > 0$, $\lambda > 0$, i.e., $X$ has density
\begin{equation*}
f(x)= \begin{cases}
		\left( \frac{\lambda}{2 \pi x^3} \right)^{\frac{1}{2}} \exp \left[ -\frac{\lambda (x - \hat{\mu})^2}{2 \hat{\mu}^2 x} \right], &\quad x > 0,\\
		0, & \quad x \leq 0.
	\end{cases} 
\end{equation*}
Let $F(c; \hat{\mu}, \lambda) = \BP(X \le c)$ denote the cumulative distribution function of $X$, set
\begin{equation*}
a \triangleq \frac{\lambda (c - \hat{\mu})^2}{\hat{\mu}^2 c} \quad \text{and for $z>0$, let} \quad \quad G(z) \triangleq \int_z^\infty (2 \pi t)^{-\frac{1}{2}} \exp(-t/2) \, dt 
\end{equation*}
be the tail probability of $\chi^2_1$.
Then	
\begin{itemize}
\item[(A)] $F(c; \hat{\mu}, \lambda) = \frac{1}{2} G(a) + \frac{1}{2} \exp(2 \lambda / \hat{\mu}) G(a + 4 \lambda / \hat{\mu}), \quad 0 < c \leq \hat\mu;$
\item[(B)] $F(c; \hat{\mu}, \lambda) = 1 - \frac{1}{2} G(a) + \frac{1}{2} \exp(2 \lambda / \hat{\mu}) G(a + 4 \lambda / \hat{\mu}), \quad\hat{\mu} < c < \infty,$
\end{itemize}	
\end{proposition}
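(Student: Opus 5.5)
The plan is to first establish the classical closed form of the inverse Gaussian distribution function in terms of the standard normal distribution function $\Phi$ (with density $\phi$), and then rewrite it using $G$. Concretely, I would show that for all $c>0$
\begin{equation*}
F(c;\hat\mu,\lambda) = \Phi(u_1(c)) + e^{2\lambda/\hat\mu}\,\Phi(u_2(c)),
\qquad u_1(c) \triangleq \sqrt{\lambda}\Big(\tfrac{c^{1/2}}{\hat\mu} - c^{-1/2}\Big), \quad u_2(c) \triangleq -\sqrt{\lambda}\Big(\tfrac{c^{1/2}}{\hat\mu} + c^{-1/2}\Big).
\end{equation*}
To prove this, I would call the right-hand side $H(c)$ and check two things: that $H'(c)=f(c)$ for $c>0$, and that $H(c)\to 0$ as $c\searrow 0$. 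Together these give $H(c)=\int_0^c f = F(c)$.

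The limit is immediate. As $c\searrow 0$, both $u_1(c)\to-\infty$ and $u_2(c)\to-\infty$, so both terms of $H$ tend to zero.

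For the derivative, the key observation is the algebraic identity
\begin{equation*}
u_2^2-u_1^2 = \tfrac{4\lambda}{\hat\mu}, \qquad\text{so that}\qquad e^{2\lambda/\hat\mu}\phi(u_2)=\phi(u_1).
\end{equation*}
The two derivatives are
\begin{equation*}
u_1' = \sqrt\lambda\Big(\tfrac{c^{-1/2}}{2\hat\mu}+\tfrac{c^{-3/2}}{2}\Big), \qquad u_2' = \sqrt\lambda\Big(-\tfrac{c^{-1/2}}{2\hat\mu}+\tfrac{c^{-3/2}}{2}\Big).
\end{equation*}
Using the identity, $H'(c)=\phi(u_1)(u_1'+u_2')=\phi(u_1)\sqrt{\lambda}\,c^{-3/2}$. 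Since $u_1^2=\lambda(c-\hat\mu)^2/(\hat\mu^2 c)$, this is exactly the stated density $f(c)$.

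It remains to translate the formula into the language of $G$. For a standard normal $N$ and $z>0$, we have $G(z)=\BP(N^2>z)=2\Phi(-\sqrt z)$. I then compute
\begin{equation*}
a = \frac{\lambda(c-\hat\mu)^2}{\hat\mu^2 c} = u_1^2, \qquad a+\frac{4\lambda}{\hat\mu}=\frac{\lambda(c+\hat\mu)^2}{\hat\mu^2 c}=u_2^2.
\end{equation*}
Since $u_2<0$, this gives $u_2=-\sqrt{a+4\lambda/\hat\mu}$, and hence the second term is $e^{2\lambda/\hat\mu}\Phi(u_2)=\tfrac12 e^{2\lambda/\hat\mu}G(a+4\lambda/\hat\mu)$.

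For the first term, the sign of $u_1$ equals the sign of $c-\hat\mu$. If $0<c\le\hat\mu$, then $u_1=-\sqrt a$ and $\Phi(u_1)=\tfrac12G(a)$, which gives (A). If $c>\hat\mu$, then $u_1=\sqrt a$ and $\Phi(u_1)=1-\Phi(-\sqrt a)=1-\tfrac12G(a)$, which gives (B).

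The only genuinely non-routine step is the first one: guessing the $\Phi$-representation and spotting the cancellation identity $e^{2\lambda/\hat\mu}\phi(u_2)=\phi(u_1)$. Once that identity is in hand, the derivative check is mechanical. The remaining bookkeeping is the sign of $u_1$, which is exactly what splits the result into cases (A) and (B).
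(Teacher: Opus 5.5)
Your proof is correct. It takes a different route from the paper only in the sense that the paper gives no proof at all: the proposition is quoted from \cite{shuster1968inverse}, so your argument makes the paper self-contained at this point.

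Every step checks out:
\begin{itemize}
\item $u_2^2-u_1^2=4\lambda/\hat\mu$ gives $e^{2\lambda/\hat\mu}\phi(u_2)=\phi(u_1)$, hence $H'(c)=\phi(u_1)\sqrt{\lambda}\,c^{-3/2}=f(c)$.
\item $H(0^+)=0=F(0^+)$, which pins down the constant.
\item The substitution $t=s^2$ gives $G(z)=2\Phi(-\sqrt z)$.
\item The sign of $u_1$ is the sign of $c-\hat\mu$, which produces the split into (A) and (B).
\end{itemize}

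There is one small bookkeeping point. At the endpoint $c=\hat\mu$ of case (A) you have $a=0$, which lies outside the range $z>0$ on which $G$ was defined. You need the continuous extension $G(0)=1$. Your identity $G(z)=2\Phi(-\sqrt z)$ supplies it, since $u_1=0$ there and $\Phi(0)=\tfrac12$.

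The two routes buy different things.
\begin{itemize}
\item Your guess-and-differentiate argument is short and elementary, but it presupposes the $\Phi$-representation.
\item A derivation carried out directly in the chi-square form in which the statement is phrased explains why the tail $G$ of $\chi^2_1$ appears and why the formula splits at $c=\hat\mu$. The map $x\mapsto\lambda(x-\hat\mu)^2/(\hat\mu^2x)$ is two-to-one, with the two roots multiplying to $\hat\mu^2$, and $\lambda(X-\hat\mu)^2/(\hat\mu^2X)$ is itself $\chi^2_1$.
\end{itemize}

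Your intermediate formula $F(c)=\Phi(u_1(c))+e^{2\lambda/\hat\mu}\Phi(u_2(c))$ is exactly the classical first-passage law for Brownian motion with drift. In the paper's setting it applies with $\lambda=\alpha^2/\eps^{2p}$ and $\hat\mu=\alpha$. So for Lemma \ref{L:FPT} one could work with this $\Phi$-form directly and bypass the $G$-form and the citation entirely.
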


We can now provide the proof of the above lemma.

\begin{proof}[Proof of Lemma \ref{L:FPT}] Suppose that $f \equiv 0$, $\sigma=1$. We start by noting that for any $n \ge 1$, the random variable $\tau^\eps_n-\tau^\eps_{n-1}$ has the same distribution as the first passage time ({\sc fpt}) to the level $\alpha>0$ for the process $t+\eps^p B_t$, or equivalently, the {\sc fpt} to level $\alpha/\eps^p$ for the process $B_t + t/\eps^p$. Given that the latter is simply a Brownian motion with drift, it follows from \cite[Section 3.5C]{KS91} that $\tau^\eps_n - \tau^\eps_{n-1}$ has density given by equation \eqref{E:FPT-pdf}. Comparing with Proposition \ref{P:CDF-IG}, we see that $\tau^\eps_n - \tau^\eps_{n-1}$ has an inverse Gaussian distribution with $\lambda=\frac{\alpha^2}{\eps^{2p}}$ and $\hat{\mu}= \alpha$. 

To prove \eqref{E:FPT-estimates}, we start by noting that for $0 < \delta < \min\{1,\alpha/2\}$, $\BP\left(|\tau^\eps_n - \tau^\eps_{n-1} - \alpha| \ge \delta\right) = \BP\left(\tau^\eps_n - \tau^\eps_{n-1} \le \alpha-\delta\right) +\BP\left(\tau^\eps_n - \tau^\eps_{n-1} \ge \alpha + \delta\right)$. We will also employ the inequality $G(z) \le \frac{2}{\sqrt{2\pi z}}e^{-z/2}$ for $z>0$, which follows by noting that $G(z) \le (2\pi z)^{-1/2}\int_z^\infty e^{-t/2} \thinspace dt$ and computing the integral. We start with $\BP\left(\tau^\eps_n - \tau^\eps_{n-1} \le \alpha-\delta\right)$, noting that by Proposition \ref{P:CDF-IG}, we have
$\BP\left(\tau^\eps_n - \tau^\eps_{n-1} \le \alpha-\delta\right)=\frac{1}{2}G\left(\frac{\delta^2}{\eps^{2p}(\alpha-\delta)}\right) + \frac{1}{2}e^{2\alpha/\eps^{2p}} G\left(\frac{\delta^2}{\eps^{2p}(\alpha-\delta)}+\frac{4\alpha}{\eps^{2p}}\right)$.
Some straightforward calculations using the inequality above for $G(z)$, the monotone nature of various functions (e.g., $x \mapsto 1/\sqrt{x}$, $x \mapsto e^{-x}$) together with the inequalities $0<\alpha/2 < \alpha-\delta < \alpha < \alpha+\delta < 2\alpha$ yield
\begin{equation}\label{E:FPT-Estimate-1}
\BP\left(\tau^\eps_n - \tau^\eps_{n-1} \le \alpha-\delta\right) \le \frac{\eps^p}{\delta}\sqrt{\frac{\alpha}{2\pi}} e^{-\delta^2/2\eps^{2p}\alpha} + \frac{\eps^p}{\sqrt{8\pi \alpha}}e^{-\delta^2/2\eps^{2p}\alpha}
\end{equation}
Next, we note that $\BP\left(\tau^\eps_n - \tau^\eps_{n-1} \ge \alpha + \delta\right)=\frac{1}{2}G\left(\frac{\delta^2}{\eps^{2p}(\alpha+\delta)}\right) - \frac{e^{2\alpha/\eps^{2p}}}{2}G\left(\frac{\delta^2}{\eps^{2p}(\alpha+\delta)} + \frac{4\alpha}{\eps^{2p}}\right) \le \frac{1}{2}G\left(\frac{\delta^2}{\eps^{2p}(\alpha+\delta)}\right)$. Once again, using the estimate for $G$ along with some elementary calculations based on the observations above, we obtain
\begin{equation}\label{E:FPT-Estimate-2}
\BP\left(\tau^\eps_n - \tau^\eps_{n-1} \ge \alpha + \delta\right) \le \frac{\eps^p}{\delta}\sqrt{\frac{\alpha}{\pi}} e^{-\delta^2/4\eps^{2p}\alpha}.
\end{equation}
Combining equations \eqref{E:FPT-Estimate-1}, \eqref{E:FPT-Estimate-2} and noting that $\eps^p \le \eps^p/\delta$, we now obtain \eqref{E:FPT-estimates}.
\end{proof}


\subsubsection{\textbf{Probability of the event that stochastic and deterministic times are closed :}\nopunct}\		

For $\varepsilon, \delta\in(0,1)$, set $G^{\varepsilon, \delta}_0\triangleq \Omega$ and for $n\geq 1$, define
$$
\begin{aligned}
	& G^{\varepsilon,\delta}_n \triangleq \{\omega\in G^{\varepsilon,\delta}_{n-1} :  |\tau^{\varepsilon}_n - t_n|\leq\delta\} \\
	& B^{\varepsilon,\delta}_n \triangleq \{\omega\in G^{\varepsilon,\delta}_{n-1} :  |\tau^{\varepsilon}_n - t_n|>\delta\}= G^{\varepsilon,\delta}_{n-1}\setminus G^{\varepsilon,\delta}_n
\end{aligned}
$$
Consequently, for $\sfT\in\mathbb{N}$,  $\bigcap_{n=1}^{\sfT } G^{\varepsilon,\delta}_n = G^{\varepsilon,\delta}_{\sfT} $ ,
Then, $\mathbb{P}(B^{\varepsilon,\delta}_{\sfT})= \mathbb{P}(\bigcup_{n=1}^{\sfT} B^{\varepsilon,\delta}_n)$  . \\
\noindent
\begin{lemma}\label{L:prob-B}
	For any $\sfT\in \BN$, there exists $C_{\ref{L:prob-B}}>0$ $$\mathbb{P}(B^{\varepsilon,\delta}_{\sfT})\le C_{\ref{L:prob-B}} \frac{\eps^p}{\delta} \exp\left(-\frac{\delta^2}{4\alpha\eps^{2p}}\right) $$
\end{lemma}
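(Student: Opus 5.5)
We would prove the bound by reducing the event $B^{\eps,\delta}_\sfT$, which the paper identifies with $\bigcup_{n=1}^{\sfT}B^{\eps,\delta}_n=\Omega\setminus G^{\eps,\delta}_\sfT$, to deviations of the individual impact times $\tau^\eps_n$ from $t_n=n\alpha$, and then use the explicit inverse Gaussian law. Since the sets $B^{\eps,\delta}_n$, $1\le n\le \sfT$, are disjoint and $B^{\eps,\delta}_n\subset\{|\tau^\eps_n-t_n|>\delta\}$, we get
\begin{equation*}
\BP(B^{\eps,\delta}_\sfT)\le \sum_{n=1}^{\sfT}\BP\left(|\tau^\eps_n-n\alpha|>\delta\right).
\end{equation*}
So it suffices to bound each summand by $C\frac{\eps^p}{\delta}\exp\left(-\frac{\delta^2}{4\alpha\eps^{2p}}\right)$ and take $C_{\ref{L:prob-B}}=\sfT C$. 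This is legitimate because the statement fixes $\sfT$ and lets the constant depend on it.

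For $n=1$ the summand is exactly \eqref{E:FPT-estimates} of Lemma \ref{L:FPT}. For general $n$ we avoid splitting $\delta$ among the $n$ inter-impact increments, since that would replace $\delta^2$ by $\delta^2/n^2$ in the exponent. Instead we use the exact law of $\tau^\eps_n$. By the strong Markov property and the fact that the $\Theta$-dynamics with $f\equiv0$, $\sigma=1$ restart at $0$ after each impact, $\tau^\eps_n$ is the first passage time of $t+\eps^pB_t$ to level $n\alpha$. Hence $\tau^\eps_n$ is inverse Gaussian with $\hat\mu=n\alpha$ and $\lambda=n^2\alpha^2/\eps^{2p}$. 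We then apply Proposition \ref{P:CDF-IG}(A),(B) with $c=n\alpha\mp\delta$, which gives $a=\delta^2/(\eps^{2p}c)$. We repeat the computations of \eqref{E:FPT-Estimate-1} and \eqref{E:FPT-Estimate-2}, using $G(z)\le \frac{2}{\sqrt{2\pi z}}e^{-z/2}$, the bound $e^{2\lambda/\hat\mu}G(a+4\lambda/\hat\mu)\le G(a)$, and $n\alpha/2\le c\le 2n\alpha$.

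The main obstacle is the exponent. This route yields
\begin{equation*}
\BP\left(|\tau^\eps_n-n\alpha|>\delta\right)\le C\frac{\eps^p}{\delta}\exp\left(-\frac{\delta^2}{2(n\alpha+\delta)\eps^{2p}}\right),
\end{equation*}
and for $n\ge2$ this rate is weaker than the stated $\delta^2/(4\alpha\eps^{2p})$. This is not a defect of the method. By the CLT scaling, $\tau^\eps_n-n\alpha$ is approximately $N(0,n\alpha\eps^{2p})$, and the matching lower bound for the inverse Gaussian tail shows the decay rate $\delta^2/(2n\alpha\eps^{2p})$ is sharp. Hence for $\sfT\ge3$ the stated inequality cannot hold uniformly as $\eps\searrow0$ with $\delta$ fixed.

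Accordingly, the plan establishes the lemma exactly as worded only for $\sfT\le2$. For $\sfT=1$ it follows directly from Lemma \ref{L:FPT}. For $\sfT=2$ the computation gives rate $\delta^2/(2(2\alpha+\delta)\eps^{2p})$. This agrees with $\delta^2/(4\alpha\eps^{2p})$ to leading order but is strictly smaller for $\delta>0$, so it reaches the stated rate only after absorbing the factor $\exp\left(\frac{\delta^3}{4\alpha(2\alpha+\delta)\eps^{2p}}\right)$ into $C$. That is possible only if $\delta^3\lesssim\eps^{2p}$.

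For general $\sfT$, the argument above proves the version with exponent $\delta^2/(2(\sfT\alpha+\delta)\eps^{2p})$. One would either use that version downstream, which is harmless if $\delta=\delta(\eps)$ is chosen with $\delta/\eps^p\to\infty$ fast enough, or rescale $\delta$ by $\sqrt{\sfT}$ in the definition of $G^{\eps,\delta}_n$.
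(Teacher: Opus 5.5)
Your diagnosis is correct: the inequality cannot hold as stated, and the paper's own proof reaches it only through an algebraic slip. The paper splits $\delta$ among the $n$ i.i.d.\ inter-impact increments, $\BP(B^{\eps,\delta}_n)\le n\,\BP(|\tau^\eps_1-\alpha|\ge\delta/n)$, and then applies Lemma \ref{L:FPT} at level $\delta/n$. That application actually gives
\[
n\,K_{\ref{L:FPT}}\frac{n\eps^p}{\delta}\exp\Bigl(-\frac{\delta^2}{4n^2\alpha\eps^{2p}}\Bigr),
\]
whereas the paper writes $nK_{\ref{L:FPT}}\frac{\eps^p}{\delta}\exp\bigl(-\frac{\delta^2}{4\alpha\eps^{2p}}\bigr)$, dropping the $n^2$ in the exponent. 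Your identification of $\tau^\eps_n$ as the first passage time of $t+\eps^pB_t$ to $n\alpha$ is right. This is in fact a pathwise identity, since $t+\eps^pB_t=k\alpha$ at $t=\tau^\eps_k$ and paths are continuous. Proposition \ref{P:CDF-IG}(B) then gives, for fixed $\delta$, a matching lower bound of order $\eps^p\exp\bigl(-\delta^2/(2(n\alpha+\delta)\eps^{2p})\bigr)$ for $\BP(\tau^\eps_n>n\alpha+\delta)$. So the stated bound fails for $\sfT\ge 3$, as you say.

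One correction to your summary concerns $\sfT=2$. Your own factor $\exp\bigl(\delta^3/(4\alpha(2\alpha+\delta)\eps^{2p})\bigr)$ is unbounded as $\eps\searrow0$ with $\delta$ fixed, so the lemma also fails there. Taken exactly as worded, it holds unconditionally only for $\sfT=1$. For $\sfT=2$ it holds only in the regime $\delta^3\lesssim\eps^{2p}$, e.g.\ $\delta=\eps^\nu$ with $\nu\ge 2p/3$.

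Comparing the routes: the paper's argument, once corrected, needs only the one-step estimate of Lemma \ref{L:FPT} and independence of increments. It yields exponent $\delta^2/(4\sfT^2\alpha\eps^{2p})$ with a prefactor of order $\sfT^3$. Your route uses the exact inverse Gaussian law with $\hat\mu=n\alpha$ and $\lambda=n^2\alpha^2/\eps^{2p}$. It gives the sharp exponent $\delta^2/(2(\sfT\alpha+\delta)\eps^{2p})$, degrading only linearly in $\sfT$, and comes with a matching lower bound.

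For the paper's downstream purposes either corrected version suffices. With $\delta=\eps^\nu$ and $1<\nu<p$, any bound $\exp(-c\,\delta^2/\eps^{2p})$ with $c>0$ is $o(\eps^N)$ for every $N$. Hence \eqref{E:small-asymp}, and with it Theorems \ref{T:LLN} and \ref{T:CLT}, survive with $\frac{1}{8\alpha}$ replaced by a smaller $\sfT$-dependent constant.
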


\begin{proof}[Proof of Lemma \ref{L:prob-B}]
For any $n\ge 1$ using the Lemma \ref{L:FPT}
\begin{multline*}
	\mathbb{P}(B^{\eps, \delta}_n)
	\le \sum_{j=1}^{n} \mathbb{P}(|\tau^\eps_j - \tau^\eps_{j-1}-\alpha|\ge \frac{\delta}{n})
	=n\mathbb{P}(|\tau^\eps_n - \tau^\eps_{n-1}-\alpha|\ge \frac{\delta}{n})
	\le n K_{\ref{L:FPT}} \frac{\eps^p}{\delta} \exp\left(-\frac{\delta^2}{4\alpha\eps^{2p}}\right)
\end{multline*}
However, $\mathbb{P}(B^{\varepsilon,\delta}_{\sfT})= \mathbb{P}(\bigcup_{n=1}^{\sfT} B^{\varepsilon,\delta}_n)\le \sum_{n=1}^{\sfT}  \mathbb{P}(B^{\varepsilon,\delta}_n)$ . 
Therefore, 
\begin{equation*}
	\begin{aligned}
		\sum_{n=1}^{\sfT} \mathbb{P}(B^{\varepsilon,\delta}_n)
		\le   K_{\ref{L:FPT}} \sfT (\sfT+1) \frac{\eps^p}{\delta} \exp\left(-\frac{\delta^2}{4\alpha\eps^{2p}}\right)\quad , 
	\end{aligned}
\end{equation*}
which completes the proof of the stated lemma. 
\end{proof}

\color{black} 

\begin{lemma}\label{L:distorion}
Let $\sfT\in \mathbb{N}$. For $0<\delta<\frac{\alpha}{4\sfT}$, 	\begin{equation}
					\gamma_{\sfT} (\lambda^{\varepsilon}(\omega)):=\sup_{0\leq s<t\leq \sfT} \left|\log \left(\frac{\lambda^{\varepsilon}_t (\omega) -\lambda^\varepsilon_s(\omega)}{t-s}\right)\right|\leq \frac{4\sfT\delta}{\alpha}   
			\end{equation} 
\end{lemma}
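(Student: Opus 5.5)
Because $\lambda^\eps(\omega)$ is piecewise linear, the plan is to bound $\gamma_\sfT$ by the worst slope. Off the good event, $\lambda^\eps_t=t$ and $\gamma_\sfT=0$, so the bound is trivial. We therefore work on the good event $G^{\eps,\delta}_\sfN$ (here identified with $G^{\eps,\delta}_\sfT$), where $|\tau^\eps_n-t_n|\le\delta$ for all $n\le \sfN$. The condition $\delta<\alpha/(4\sfT)\le\alpha/4$ guarantees that this event is admissible in Definition \ref{D:rtd}.

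\textbf{Step 1: reduce to slopes.} For a continuous, strictly increasing, piecewise linear $\lambda$ with slopes $J_1,\dots,J_m>0$, the difference quotient $(\lambda_t-\lambda_s)/(t-s)$ is a convex combination of the slopes on $[s,t]$. It therefore lies in $[\min_k J_k,\max_k J_k]$. Hence
\[
\gamma_\sfT(\lambda^\eps)\le \max_k|\log J_k|,
\]
where the $J_k$ are the values of $J^\eps$ from \eqref{E:J-generic}.

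\textbf{Step 2: bound each slope.} On an interior piece $[t_{k-1},t_k)$ the slope is
\[
J_k=1+\frac{(\tau^\eps_k-t_k)-(\tau^\eps_{k-1}-t_{k-1})}{\alpha},
\qquad\text{so}\qquad |J_k-1|\le \frac{2\delta}{\alpha}.
\]
On the last piece the slope is $1-(\tau^\eps_\sfN-t_\sfN)/(\sfT-\sfN\alpha)$. When $\sfT\in\BN$ is used as the index bound, as in Lemma \ref{L:prob-B}, the last impact time coincides with the horizon and $\sfT$ is itself aligned. To be safe I would treat the endpoint piece with the convention that $\sfT-t_\sfN$ is comparable to $\alpha$, which gives $|J-1|\le \delta/(\sfT-t_\sfN)$. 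This last piece is the step I expect to be the main obstacle: the statement as written silently needs $\sfT-\sfN\alpha$ bounded below by a multiple of $\alpha$. The generous factor $\sfT$ in the claim is what absorbs these constants.

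\textbf{Step 3: convert to logarithms.} For $|x|\le 1/2$ we have $|\log(1+x)|\le 2|x|$. With $x=J_k-1$, the bound $|x|\le 2\delta/\alpha<1/(2\sfT)\le 1/2$ holds because $\delta<\alpha/(4\sfT)$. Hence
\[
|\log J_k|\le \frac{4\delta}{\alpha}\le \frac{4\sfT\delta}{\alpha}
\]
on the interior pieces, and similarly on the final piece under the stated comparability. Taking the maximum over $k$ and applying Step 1 yields the claim.
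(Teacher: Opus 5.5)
Your proof is correct on the full pieces and takes a different, sharper route than the paper.

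\textbf{Comparison with the paper.} The paper fixes $s<t$, splits $\lambda^\eps_t-\lambda^\eps_s-(t-s)$ over the corners between $s$ and $t$, and bounds each weight $(u_i-u_{i-1})/(t-s)$ by $1$. This gives $\bigl|\frac{\lambda^\eps_t-\lambda^\eps_s}{t-s}-1\bigr|\le\sum_i|J_i-1|\le 2\sfT\delta/\alpha$. The hypothesis $\delta<\alpha/(4\sfT)$ is exactly what makes this at most $1/2$, so that $|\log(1\pm x)|\le 2x$ applies. The factor $\sfT$ in the conclusion is the price of summing the slope deviations rather than averaging them.

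Your convex-combination observation keeps the weights summing to $1$. Hence $\gamma_\sfT(\lambda^\eps)\le\max_k|\log J_k|\le 2\|J^\eps-1\|_\infty$ whenever $\|J^\eps-1\|_\infty\le 1/2$. On the full pieces this gives $4\delta/\alpha$ with no factor $\sfT$, and it needs only $\delta\le\alpha/4$. It also ties $\gamma_\sfT$ directly to Lemma \ref{L:Jacobian-Estimates} (whose $\wedge$ should read $\vee$). Nothing is lost by your route.

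\textbf{The final piece.} Your caution about $[t_\sfN,\sfT]$ is justified, and the paper's proof does not address it either: its maximum runs only over the full pieces $[t_{n-1},t_n]$. In fact the inequality is false without a lower bound on $\sfT-\sfN\alpha$. Take $\alpha=0.9$, $\sfT=1$ (so $\sfN=1$) and $\delta=0.05<\alpha/(4\sfT)$. On the positive-probability event $\{\tau^\eps_1\in[0.94,0.95]\}\subset G^\eps_\sfN$, the final slope $(\sfT-\tau^\eps_1)/(\sfT-\alpha)$ lies in $[0.5,0.6]$. So $\gamma_\sfT(\lambda^\eps)\ge\log(5/3)>0.5$, whereas $4\sfT\delta/\alpha<0.23$. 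Moreover, if $\delta\ge\sfT-\sfN\alpha$, then $\lambda^\eps$ need not even be increasing.

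Your alternative reading, with the horizon aligned to the last impact, does not remove the issue. A bijection of the horizon interval must fix its right endpoint, which would force $\tau^\eps_\sfN=t_\sfN$.

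So replace ``comparable to $\alpha$'' by an explicit hypothesis. If $\sfT-\sfN\alpha\ge\alpha/(2\sfT)$, then:
\begin{itemize}
\item $\delta<\alpha/(4\sfT)\le(\sfT-\sfN\alpha)/2$;
\item the final slope satisfies $|J-1|\le\delta/(\sfT-\sfN\alpha)\le 2\sfT\delta/\alpha<1/2$;
\item your Steps 1 and 3 then give exactly $\gamma_\sfT(\lambda^\eps)\le 4\sfT\delta/\alpha$.
\end{itemize}
Without that hypothesis, your argument still yields $\gamma_\sfT(\lambda^\eps)\le 2\delta\max\{2/\alpha,\,1/(\sfT-\sfN\alpha)\}$ once $\delta\le\min\{\alpha/4,(\sfT-\sfN\alpha)/2\}$. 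That is all the later estimates need when $\delta=\eps^\nu$ and $\eps$ is small.
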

\begin{proof}[Proof of Lemma \ref{L:distorion}]
	Clearly, for $\omega\in\Omega\setminus G^{\varepsilon,\delta}_\sfT,\gamma_{\sfT} (\lambda^\varepsilon(\omega))=0.$\newline
	Fix $\omega\in G^{\varepsilon,\delta}_\sfT$. Then function $\lambda^\varepsilon_{.}(\omega)$ is piecewise linear with corners at $\tau^\varepsilon_k, \sfT; k=0,1,2,\ldots,n$\newline
	Since, $\omega\in G^{\varepsilon,\delta}_{\sfT}$, $\tau^\varepsilon(\omega)\leq [t_n-\delta, t_n+\delta]$, for $1\leq n\leq \sfT$ and $\lambda^\varepsilon_{t_n}(\omega)=\tau^\varepsilon(\omega)$.
	\begin{equation*}
		\max_{1\leq n\leq \sfT}\left|\frac{\lambda^\varepsilon_{t_n}(\omega)-\lambda^\varepsilon_{t_{n-1}}(\omega)}{t_n-t_{n-1}}-1\right|\leq \frac{2\delta}{\alpha}
	\end{equation*}
	Let $0\leq s<t<\sfT$ and let $u_0,u_1,\ldots,u_n$ be corners just to the left of $s$ ending to the right of $t$ i.e.
	\begin{equation*}
		\begin{aligned}
			\left| \lambda^\varepsilon_{t}(\omega)-\lambda^\varepsilon_{s}(\omega)-(t-s)\right|&\leq |t-u_{k-1}|\left|\frac{\lambda^\varepsilon_{t}(\omega)-\lambda^{\varepsilon}_{u_{k-1}}(\omega)}{t-u_{k-1}}-1\right|+ \sum_{i=2}^{k-1} |u_i-u_{i-1}|\left|\frac{\lambda^{\varepsilon}_{u_i}(\omega)-\lambda^{\varepsilon}_{u_{i-1}}(\omega)}{u_i-u_{i-1}}-1\right|\\
			&\quad + |u_1 - s| \left|\frac{\lambda^{\varepsilon}_{u_1}(\omega)- \lambda^\varepsilon_s(\omega)}{u_1 - s}-1\right|
		\end{aligned}
	\end{equation*}
		Therefore, 
		\begin{equation*}
			\begin{aligned}
				\left|\frac{ \lambda^\varepsilon_{t}(\omega)-\lambda^\varepsilon_{s}(\omega)}{t-s}-1\right|&\leq \left|\frac{t-u_{k-1}}{t-s}\right|\left|\frac{\lambda^\varepsilon_{t}(\omega)-\lambda^{\varepsilon}_{u_{k-1}}(\omega)}{t-u_{k-1}}-1\right|+ \sum_{i=2}^{k-1} \left|\frac{u_i-u_{i-1}}{t-s}\right|\left|\frac{\lambda^{\varepsilon}_{u_i}(\omega)-\lambda^{\varepsilon}_{u_{i-1}}(\omega)}{u_i-u_{i-1}}-1\right|\\
				&\quad + \left|\frac{u_1 - s}{t-s}\right| \left|\frac{\lambda^{\varepsilon}_{u_1}(\omega)- \lambda^\varepsilon_s(\omega)}{u_1 - s}-1\right|
			\end{aligned}
		\end{equation*}
		This implies 
		\begin{equation*}
			\begin{aligned}
				\left|\frac{ \lambda^\varepsilon_{t}(\omega)-\lambda^\varepsilon_{s}(\omega)}{t-s}-1\right|\leq \sum_{i=1}^{k} \left|\frac{\lambda^{\varepsilon}_{u_i}-\lambda^{\varepsilon}_{u_{i-1}}} {u_i-u_{i-1}}-1\right|\leq \frac{2\sfT}{\delta}
			\end{aligned}
		\end{equation*}
		Finally, for $0<\delta\leq \frac{\alpha}{4\sfT}$, 
		\begin{equation}
			\begin{aligned}
				\log \left(1-\frac{2T\delta}{\alpha}\right)
				&\leq \log \left(\frac{ \lambda^\varepsilon_{t}(\omega)-\lambda^\varepsilon_{s}(\omega)}{t-s}\right)\leq\log \left(1+\frac{2\sfT\delta}{\alpha}\right)\\
				-\frac{4\sfT\delta}{\alpha}
				&\leq \log \left(\frac{ \lambda^\varepsilon_{t}(\omega)-\lambda^\varepsilon_{s}(\omega)}{t-s}\right)\leq \frac{4\sfT\delta}{\alpha}\\
				&\gamma_{T}(\lambda^\varepsilon(\omega))\leq \frac{4\sfT\delta}{\alpha}
			\end{aligned}
		\end{equation}
	\end{proof}

\begin{lemma}\label{L:Jacobian-Estimates}
	Given $\delta>0$ same as the in set $G^\eps_{\cdot}$, the following holds,
	\begin{equation*}
		\|J^\eps_{\cdot} -1\|_\infty \le  \frac{2\delta}{\alpha}\wedge \frac{\delta}{\sfT-\ren(\sfT)\alpha}
	\end{equation*}
\end{lemma}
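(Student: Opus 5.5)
The plan is a direct, piece-by-piece computation from the definition \eqref{E:J-generic}, with the event $G^\eps_\sfN$ taken to be $G^{\eps,\delta}_\sfN$, the event used in Lemmas \ref{L:prob-B} and \ref{L:distorion}. On $\Omega\setminus G^\eps_\sfN$ we have $J^\eps_t\equiv 1$, so $\|J^\eps_\cdot-1\|_\infty=0$ there and any nonnegative right-hand side works. It therefore suffices to fix $\omega\in G^{\eps,\delta}_\sfN$. For such $\omega$ we have $|\tau^\eps_k(\omega)-t_k|\le\delta$ for $0\le k\le\sfN$, where $\tau^\eps_0=t_0=0$. Since $J^\eps_\cdot(\omega)$ is piecewise constant, the sup norm is a maximum over the $\sfN+1$ pieces.

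\textbf{Interior pieces.} For $t\in[t_{k-1},t_k)$ with $1\le k\le \sfN$, write
\begin{equation*}
J^\eps_t-1=\frac{(\tau^\eps_k-t_k)-(\tau^\eps_{k-1}-t_{k-1})}{\alpha},
\end{equation*}
using $t_k-t_{k-1}=\alpha$. The triangle inequality then gives $|J^\eps_t-1|\le 2\delta/\alpha$. For $k=1$ the bound improves to $\delta/\alpha$, because $\tau^\eps_0=t_0$.

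\textbf{Final piece.} For $t\in[t_\sfN,\sfT]$, the same manipulation with $\sfT-t_\sfN=\sfT-\ren(\sfT)\alpha$ gives
\begin{equation*}
J^\eps_t-1=\frac{(\sfT-\tau^\eps_\sfN)-(\sfT-t_\sfN)}{\sfT-t_\sfN}=\frac{t_\sfN-\tau^\eps_\sfN}{\sfT-\ren(\sfT)\alpha}.
\end{equation*}
Hence $|J^\eps_t-1|\le \delta/(\sfT-\ren(\sfT)\alpha)$. There is only one endpoint error here, since $\lambda^\eps_\sfT=\sfT$ is pinned.

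\textbf{Combining the two bounds.} This is the step I expect to be the main obstacle. The two computations bound $J^\eps-1$ on complementary parts of $[0,\sfT]$, so assembling them naively yields the maximum of the two quantities, not the minimum $\frac{2\delta}{\alpha}\wedge\frac{\delta}{\sfT-\ren(\sfT)\alpha}$ as written. To reach the stated form, I would first check which of the two terms is smaller in the regime where the lemma is used. Since $\sfT-\ren(\sfT)\alpha\in(0,\alpha)$, we have $\delta/(\sfT-\ren(\sfT)\alpha)>\delta/\alpha$. Whether it also exceeds $2\delta/\alpha$ depends on whether $\sfT-\sfN\alpha<\alpha/2$.

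\textbf{Caveat.} The piecewise computation itself does not by itself give the minimum. If $\sfT-\sfN\alpha<\alpha/2$, the minimum is $2\delta/\alpha$. For $\sfN\ge 2$, the final piece can then have $|J^\eps_t-1|$ as large as $\delta/(\sfT-\sfN\alpha)>2\delta/\alpha$, which violates the stated bound. In that case I would have to read the statement piecewise: the $2\delta/\alpha$ term bounds $|J^\eps-1|$ on $[0,t_\sfN)$, and the $\delta/(\sfT-\ren(\sfT)\alpha)$ term bounds it on $[t_\sfN,\sfT]$. I would also check whether $\sfT\in\BN$ with $\sfT=\sfN\alpha$ is intended, as in Lemma \ref{L:distorion}; then the last piece is empty and the bound reduces to $2\delta/\alpha$.

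Either way, what the downstream use needs is the combined estimate $\|J^\eps-1\|_\infty\le C\delta$ with $C$ independent of $\eps$. This follows from the two piecewise bounds together with the standing assumption $\sfT\in(\sfN\alpha,(\sfN+1)\alpha)$, which keeps $\sfT-\ren(\sfT)\alpha$ bounded away from zero.
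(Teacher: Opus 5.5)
Your piecewise computation is the paper's own argument: on $G^\eps_\sfN$ the interior pieces satisfy $|\tau^\eps_k-\tau^\eps_{k-1}-\alpha|\le 2\delta$, and the last piece gives $|t_\sfN-\tau^\eps_\sfN|/(\sfT-\sfN\alpha)\le \delta/(\sfT-\ren(\sfT)\alpha)$. Your diagnosis of the final step is also right: the paper's proof writes $|J^\eps_t-1|\le(\cdots)\vee(\cdots)$ and then concludes with $\wedge$ without justification, so what is actually proved is the bound with $\vee$ in place of $\wedge$, and that is all that is used later, in the form $\|J^\eps_\cdot-1\|_\infty\le C\delta$.

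Two small corrections to your caveat. First, the qualifier $\sfN\ge 2$ is unnecessary. Whenever $\sfT-\sfN\alpha<\alpha/2$, the last piece alone can exceed $2\delta/\alpha$ with positive probability, so the stated $\wedge$-bound already fails for $\sfN=1$. Second, the reading $\sfT=\sfN\alpha$ is ruled out by the standing definition $\sfT\in(\sfN\alpha,(\sfN+1)\alpha)$.
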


\begin{proof}[Proof of Lemma \ref{L:Jacobian-Estimates}]
	For $t\in [0, \sfT]$, 
	\begin{equation*}
		J^\eps_{t}(\omega) \triangleq \begin{cases}\sum_{k=1}^{\sfN} \ind_{[t_{k-1}, t_{k})} (t) \left(\frac{\tau^\varepsilon_{k}(\omega)-\tau^{\varepsilon}_{k-1} (\omega)}{\alpha}\right)  + \ind_{[t_{\sfN}, \sfT]}(t) \left(\frac{\sfT-\tau^\varepsilon_{\sfN}(\omega)}{\sfT-\sfN\alpha}\right) \quad &\text{for $\omega \in G^\eps_\sfN$}\\1 &\text{for $\omega \in \Omega\setminus G^\eps_\sfN$}\end{cases}
	\end{equation*}
	
	Take any $t\in [0, \sfT]$, then $\exists\, k\in \{1,\dots, \sfN\}$ such that $t\in [t_{k-1}, t_k)\cup[t_{\sfN}, \sfT]$

	$$|J^\eps_{t}(\omega) -1| \le \left|\left(\frac{\tau^\varepsilon_{k}(\omega)-\tau^{\varepsilon}_{k-1} (\omega)}{\alpha}\right) -1\right| \vee \left|\left(\frac{\sfT-\tau^\varepsilon_{\sfN}(\omega)}{\sfT -\sfN\alpha}\right) -1 \right|$$
\noindent
	However, for $\omega\in G^\eps_\sfN$,  $|\tau^\eps_k(\omega) - k\alpha|\le \delta \text{ which implies } |\tau^\eps_k(\omega) - \tau^\eps_{k-1}(\omega) -\alpha|\le 2\delta$ and $\left|\left(\frac{\sfT-\tau^\varepsilon_{\sfN}(\omega)}{\sfT-\sfN\alpha}\right) -1 \right|= \left|\frac{\sfN\alpha-\tau^\eps_\sfN(\omega)}{\sfT-\sfN\alpha}\right|\le \frac{\delta}{\sfT-\ren(\sfT)\alpha} $\, .

	Hence, $|J^\eps_{t}(\omega) -1|\le \frac{2\delta}{\alpha}\wedge \frac{\delta}{\sfT-\ren(\sfT)\alpha}$, for any $t\in [0, \sfT]$. Consequently, $\|J^\eps_\cdot -1 \|_\infty \le \frac{2\delta}{\alpha}\wedge \frac{\delta}{\sfT-\ren(\sfT)\alpha}$\, .
\end{proof}

In essence, we are claiming that $\|J^\eps_\cdot -1 \|_\infty$ is bounded above by $C \delta$, for some constant $C>0$. 
\color{black} 

\begin{lemma}\label{L:Renewal-Process}
	Let $t\in [0, \sfT]$. For a given fixed $\gamma>0$, there exists $\lambda>0$ with  $\lambda> \frac{\gamma}{\alpha}+ \frac{\gamma^2}{2\alpha^2}$
	 such that
	\begin{equation*}
		\begin{aligned}
			\mathbb{E}\left[e^{\gamma Q^\eps_t}\right]&\le  \frac{e^{\lambda t}}{1- e^\gamma \mathbb{E}\left[ e^{-\lambda \tau^\eps_1}\right]}, \quad \text{provided } e^\gamma \mathbb{E}\left[ e^{-\lambda \tau^\eps_1}\right]<1\\
			& \text{where, } \mathbb{E}\left[e^{-\lambda \tau^\eps_1}\right]= \exp\left(\frac{\alpha}{\eps^{2p}} \left(1-\sqrt{1+2\lambda \eps^{2p}}\right)\right); \quad (\text{cf. \cite{KS91}})
		\end{aligned}
	\end{equation*}
	In fact, with the assumptions in lemma \ref{L:Renewal-Process}, there further exists a constant $\kappa(\lambda)$ independent of the parameter $\eps$ such that $$
	\mathbb{E}\left[e^{\gamma Q^\eps_t}\right]\le \kappa(\lambda) e^{\lambda t}
	$$
\end{lemma}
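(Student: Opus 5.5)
We will use the classical exponential-moment argument for renewal processes. Write $\tau^\eps_k=\sum_{j=1}^k \xi_j$ with $\xi_j\triangleq\tau^\eps_j-\tau^\eps_{j-1}$. By Lemma \ref{L:FPT}, the $\xi_j$ are i.i.d.\ inverse Gaussian random variables, and $\xi_1$ has the law of $\tau^\eps_1$.

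\textbf{Step 1: tail bound via Chernoff.} Since $\{\Ren^\eps_t\ge k\}=\{\tau^\eps_k\le t\}$, Markov's inequality applied to $e^{-\lambda\tau^\eps_k}$ gives, for any $\lambda>0$,
\begin{equation*}
\BP(\Ren^\eps_t\ge k)=\BP\bigl(e^{-\lambda\tau^\eps_k}\ge e^{-\lambda t}\bigr)\le e^{\lambda t}\bigl(\BE[e^{-\lambda\tau^\eps_1}]\bigr)^k .
\end{equation*}

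\textbf{Step 2: summation.} Write $\BE[e^{\gamma\Ren^\eps_t}]=\sum_{k\ge0}e^{\gamma k}\BP(\Ren^\eps_t=k)\le\sum_{k\ge0}e^{\gamma k}\BP(\Ren^\eps_t\ge k)$. Inserting Step 1 produces a geometric series with ratio $q_\eps\triangleq e^\gamma\BE[e^{-\lambda\tau^\eps_1}]$. Whenever $q_\eps<1$ this yields $e^{\lambda t}/(1-q_\eps)$, which is the first claim. The Laplace transform of the first passage time of $t+\eps^pB_t$ to level $\alpha$ is standard (Karatzas–Shreve), giving $\BE[e^{-\lambda\tau^\eps_1}]=\exp\bigl(\tfrac{\alpha}{\eps^{2p}}(1-\sqrt{1+2\lambda\eps^{2p}})\bigr)$.

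\textbf{Step 3: uniformity in $\eps$.} This is the only real issue: we need to choose $\lambda$ so that $q_\eps$ is bounded away from $1$ uniformly in small $\eps$. Set $x=2\lambda\eps^{2p}$. Using the elementary inequality $\sqrt{1+x}\ge 1+\tfrac{x}{2}-\tfrac{x^2}{8}$ for $x\ge0$, we get
\begin{equation*}
\frac{\alpha}{\eps^{2p}}\bigl(1-\sqrt{1+2\lambda\eps^{2p}}\bigr)\le -\alpha\lambda+\tfrac{\alpha}{2}\lambda^2\eps^{2p}.
\end{equation*}
Hence $\log q_\eps\le\gamma-\alpha\lambda+\tfrac{\alpha}{2}\lambda^2\eps^{2p}$. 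Choose $\lambda>\gamma/\alpha+\gamma^2/(2\alpha^2)$ as in the statement; in particular $\alpha\lambda-\gamma>\gamma^2/(2\alpha)\ge0$. Then for all $\eps\le\eps_0(\lambda)$, with $\eps_0$ small enough that $\tfrac{\alpha}{2}\lambda^2\eps^{2p}\le\tfrac12(\alpha\lambda-\gamma)$, we have
\begin{equation*}
\log q_\eps\le-\tfrac12(\alpha\lambda-\gamma)<0 .
\end{equation*}
Alternatively, one can observe directly that $\eps\mapsto\sqrt{1+2\lambda\eps^{2p}}$ gives a function $q_\eps$ that is monotone in $\eps$ and converges to $e^{\gamma-\alpha\lambda}$ as $\eps\to0$.

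Setting $\kappa(\lambda)\triangleq\bigl(1-e^{-(\alpha\lambda-\gamma)/2}\bigr)^{-1}$, which does not depend on $\eps$, gives $\BE[e^{\gamma\Ren^\eps_t}]\le\kappa(\lambda)e^{\lambda t}$. The required smallness of $\eps$ is consistent with the standing "$\eps$ sufficiently small" convention. The main obstacle is exactly this uniformity step; the concrete lower bound on $\lambda$ in the statement is more than enough to make it work.
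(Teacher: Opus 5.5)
Your proposal is correct. Steps 1 and 2 are the paper's argument: the identity $\{\Ren^\eps_t\ge k\}=\{\tau^\eps_k\le t\}$, Chernoff's bound, i.i.d.\ increments and a geometric series. One small attribution point: independence of the increments comes from the strong Markov property at the times $\tau^\eps_{k}$, not from Lemma \ref{L:FPT}, which only identifies their common law.

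Where you differ is the uniformity step.

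\textbf{The paper's route.} The paper rewrites the exponent exactly as $\frac{\alpha}{\eps^{2p}}\bigl(1-\sqrt{1+2\lambda\eps^{2p}}\bigr)=-\frac{2\alpha\lambda}{1+\sqrt{1+2\lambda\eps^{2p}}}$. This quantity is increasing in $\eps$, so for every $\eps\in(0,1)$ it is bounded above by its value at $\eps=1$. The paper then applies $(1-e^{-x})^{-1}\le 1+x^{-1}$ with $x=\frac{2\alpha\lambda}{1+\sqrt{1+2\lambda}}-\gamma$. The condition $x>0$ is \emph{exactly} $\lambda>\frac{\gamma}{\alpha}+\frac{\gamma^2}{2\alpha^2}$, which is why that particular threshold appears in the statement.

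\textbf{Your route.} Your second-order Taylor bound gives a uniform estimate only for $\eps\le\eps_0(\lambda)$. In exchange it needs only $\lambda>\gamma/\alpha$. So it permits a growth rate $e^{\lambda t}$ with $\lambda$ arbitrarily close to the natural rate $\gamma/\alpha$, since $\Ren^\eps_t\approx t/\alpha$. The paper's route instead gives validity on the whole range $\eps\in(0,1)$ at the price of a larger $\lambda$. Because the lemma is only ever used for small $\eps$, your version suffices for the paper's purposes.

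\textbf{Getting the full range.} Your alternative remark looks at the wrong endpoint. Your ratio $q_\eps$ is \emph{increasing} in $\eps$, so its limit $e^{\gamma-\alpha\lambda}$ as $\eps\to0$ is its infimum, not a uniform upper bound. The bound valid for all $\eps\in(0,1)$ is $q_\eps\le q_1$. The requirement $q_1<1$ is precisely the stated condition on $\lambda$, and that is the missing observation explaining the threshold you called ``more than enough''.
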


\begin{proof}[Proof of Lemma \ref{L:Renewal-Process}]
	We begin the proof by first noticing $\{Q^\eps_t\ge n\}\iff \{\tau^\eps_n \le t\}$.
	
	\begin{multline*}
		\mathbb{E}\left[e^{\gamma Q^\eps_t}\right]
		= \sum_{n\ge 0} e^{\gamma n} \mathbb{P}(Q^\eps_t)=n)
		\le \sum_{n\ge 0} e^{\gamma n} \mathbb{P}(Q^\eps_t)\ge n)\le  \sum_{n\ge 0} e^{\gamma n} \mathbb{P}(\tau^\eps_n \le t)  \\
		\text{Using Chernoff's inequality, for any } \lambda>0 \\
		\le \sum_{n\ge 0} e^{\gamma n} e^{\lambda t} \mathbb{E}\left[e^{-\lambda \tau^\eps_n}\right] = \sum_{n\ge 0} e^{\gamma n } e^{\lambda t} \mathbb{E}\left[e^{-\lambda (\tau^\eps_n - \tau^\eps_{n-1}+\tau^\eps_{n-1}-\tau^\eps_{n-2}+\ldots+\tau^\eps_1-\tau^\eps_0)}\right]=  \sum_{n\ge 0} e^{\gamma n } e^{\lambda t} \mathbb{E}\left[e^{-\lambda \tau^\eps_1}\right]^n\\
		\text{where we have used the fact that the sequence} \{\tau^\eps_n - \tau^\eps_{n-1}\}_{n\ge 1} \text{is iid \cite[Section 3.5C]{KS91}}
	\end{multline*}
	
Therefore, $$
	\mathbb{E}\left[e^{\gamma Q^\eps_t}\right]\le \frac{e^{\lambda t}}{1- e^\gamma \mathbb{E}\left[ e^{-\lambda \tau^\eps_1}\right]}, \quad \text{provided } e^\gamma \mathbb{E}\left[ e^{-\lambda \tau^\eps_1}\right]<1
$$

However, from \cite{KS91},  we know the fact that for $\lambda>0$, 
$$ \mathbb{E}\left[e^{-\lambda \tau^\eps_1}\right]= \exp\left(\frac{\alpha}{\eps^{2p}} \left(1-\sqrt{1+2\lambda \eps^{2p}}\right)\right)$$

Therefore, we must have $\gamma+\frac{\alpha}{\eps^{2p}} \left(1-\sqrt{1+2\lambda \eps^{2p}}\right)<0 $ which implies $\lambda> \frac{\gamma}{\alpha}+\frac{\gamma^2}{2\alpha^2}\eps^{2p}$.
	
We next proceed to find a lower bound for the denominator. $1- e^\gamma \mathbb{E}\left[ e^{-\lambda \tau^\eps_1}\right]$.

Using the expression $$ \mathbb{E}\left[e^{-\lambda \tau^\eps_1}\right]= \exp\left(\frac{\alpha}{\eps^{2p}} \left(1-\sqrt{1+2\lambda \eps^{2p}}\right)\right)$$ we write
\begin{equation*}
	\begin{aligned}
		1- e^\gamma \mathbb{E}\left[ e^{-\lambda \tau^\eps_1}\right]= 1- \exp\left(\frac{\alpha}{\eps^{2p}} \left(1-\sqrt{1+2\lambda \eps^{2p}}\right)\right)
		= 1- exp\left(\gamma - \frac{2\alpha\lambda}{1+\sqrt{1+2\lambda \eps^{2p}}}\right)
	\end{aligned}
\end{equation*}

Now with $0<\eps<1$, 
\begin{equation*}
	\begin{aligned}
		1- exp\left(\gamma - \frac{2\alpha\lambda}{1+\sqrt{1+2\lambda \eps^{2p}}}\right)& \ge 1+ \left( \frac{2\alpha\lambda}{1+\sqrt{1+2\lambda \eps^{2p}}}-\gamma\right)^{-1}\\
		&\ge 1+ \left( \frac{2\alpha\lambda - \gamma\left(1+\sqrt{1+2\lambda }\right)}{1+\sqrt{1+2\lambda }}\right)^{-1}\triangleq \kappa(\lambda)^{-1}
	\end{aligned}
\end{equation*}
 Since, $\eps\in(0, 1)$, one may choose  $\lambda>\frac{\gamma}{\alpha}+\frac{\gamma^2}{2\alpha^2}$ so that the assumptions continue to hold and Lemma \ref{L:Renewal-Process} remains valid.
\end{proof}


\section{Limiting Mean Behavior}\label{S:LLN}
In the present section, we provide the proof of Theorem \ref{T:LLN}, breaking the arguments into a series of lemmas. We now state a couple of lemmas which enable us to estimate the quantity $|R^\eps_{\lambda^\eps_t}-r(t)|$ and $|\Theta^\eps_{\lambda^\eps_t}-\theta(t)|$ on $G^\eps_\sfN$ and $\Omega\setminus G^\eps_\sfN$. The proofs are given in Section \ref{S:Lemmas}. 
\begin{proposition}\label{P:LLN-comparison}
There exists a constant $C=C(\sfT)>0$ such that for $\eps \in (0,1)$, we have
\begin{equation*}
\begin{aligned}
|R^\eps_{\lambda^\eps_t}-r(t)| &\le C\left\{\|J^\eps_\cdot-1\|_\infty + \eps \sup_{s \in [0,\sfT]} |W_s|\right\} \ind_{G^\eps_\sfN} + Ce^{\varrho \Ren^\eps_t}\left\{1 + \eps (\Ren^\eps_\sfT+1) \sup_{s \in [0,\sfT]} |W_s| \right\} \ind_{\Omega\setminus G^\eps_\sfN},\\
|\Theta^\eps_{\lambda^\eps_t}-\theta(t)| &\le \left\{2\delta+ 2\varepsilon^p \sup_{[0,\thinspace \sfT]} |B_t|\right\} \ind_{G^\eps_\sfN} + \left\{ 4\sfT + 2\eps^p \sup_{[0, \sfT]} |B_t|\right\} \ind_{\Omega\setminus G^\eps_\sfN}.
\end{aligned}
\end{equation*}
\end{proposition}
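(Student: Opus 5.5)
The proof splits according to whether $\omega\in G^\eps_\sfN$ or not. On $G^\eps_\sfN$ the time distortion $\lambda^\eps$ aligns $t_k$ with $\tau^\eps_k$ for $0\le k\le\sfN$, so for $t\in[t_{k-1},t_k)$ we have $\lambda^\eps_t\in[\tau^\eps_{k-1},\tau^\eps_k)$. Both $R^\eps_{\lambda^\eps_t}$ and $r(t)$ then lie in their $k$-th excursion. On $\Omega\setminus G^\eps_\sfN$ we have $\lambda^\eps_t=t$, and only crude a priori bounds on both processes are needed.

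\textbf{Radial component on $G^\eps_\sfN$.} Set $e_k(t)\triangleq|R^\eps_{\lambda^\eps_t}-r(t)|$ for $t\in[t_{k-1},t_k)$. By \eqref{E:stoch-state-int-eq} and the substitution $s=\lambda^\eps_u$ with $ds=J^\eps_u\,du$,
\[
R^\eps_{\lambda^\eps_t}=R^{\eps,+}_{\tau^\eps_{k-1}}+\int_{t_{k-1}}^t b(R^\eps_{\lambda^\eps_u})J^\eps_u\,du+\eps\bigl(W_{\lambda^\eps_t}-W_{\tau^\eps_{k-1}}\bigr).
\]
Subtracting \eqref{E:det-state-int-eq} and writing $bJ^\eps-b=b(J^\eps-1)$, Assumption \ref{A:vf-wedge} gives
\[
e_k(t)\le |R^{\eps,+}_{\tau^\eps_{k-1}}-r^+(t_{k-1})|+\sfK_b\alpha\|J^\eps_\cdot-1\|_\infty+\sfK_b\int_{t_{k-1}}^t e_k(u)\,du+2\eps\sup_{[0,\sfT]}|W|.
\]
Gronwall then yields $\sup_{[t_{k-1},t_k)}e_k\le e^{\sfK_b\alpha}\{\,|R^{\eps,+}_{\tau^\eps_{k-1}}-r^+(t_{k-1})|+C\|J^\eps_\cdot-1\|_\infty+2\eps\sup|W|\,\}$. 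The left limits at $t_k$ satisfy the same bound. Since $R^{\eps,+}_{\tau^\eps_k}=h(R^{\eps,-}_{\tau^\eps_k})$ and $r^+(t_k)=h(r^-(t_k))$, the Lipschitz property of $h$ gives $|R^{\eps,+}_{\tau^\eps_k}-r^+(t_k)|\le\|h'\|_\infty\, e_k(t_k^-)$. Iterating over the at most $\sfN+1$ intervals, with $\|h'\|_\infty^{\sfN+1}$ and $e^{(\sfN+1)\sfK_b\alpha}$ absorbed into $C$, gives the first term. The final interval $[t_\sfN,\sfT]$ is handled identically, and the initial error vanishes because $R^\eps_0=r_0$.

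\textbf{Radial component off $G^\eps_\sfN$.} Here $\lambda^\eps_t=t$, and I would bound $|R^\eps_t|$ and $|r(t)|$ separately. On each stochastic excursion $|R^\eps_t|\le R^{\eps,+}_{\tau^\eps_{k-1}}+\sfK_b\sfT+2\eps\sup|W|$. By \eqref{E:MVT-h}, $h(x)\le\|h'\|_\infty x$, so $R^{\eps,+}_{\tau^\eps_k}\le(\|h'\|_\infty\vee1)\bigl(R^{\eps,+}_{\tau^\eps_{k-1}}+C+2\eps\sup|W|\bigr)$. Iterating over the $\Ren^\eps_t$ resets up to time $t$ gives
\[
|R^\eps_t|\le e^{\lhp\Ren^\eps_t}\bigl(r_0+C(\Ren^\eps_t+1)+2\eps(\Ren^\eps_t+1)\sup|W|\bigr).
\]
Absorbing the polynomial factor $\Ren^\eps_t+1$ multiplying the constants is the delicate point. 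I would either use $\lhp$ replaced by a slightly larger constant (adjusting $C$), or, more honestly, note that the statement allows the factor $(\Ren^\eps_\sfT+1)$ on the $W$ term, and bound $C(\Ren^\eps_t+1)\le Ce^{\Ren^\eps_t}$, enlarging $\lhp$ by $1$ if needed. I expect this bookkeeping, matching exactly the stated exponent $\lhp\Ren^\eps_t$, to be the main obstacle. The deterministic $|r(t)|\le C$ is trivially absorbed.

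\textbf{Angular component.} On $G^\eps_\sfN$ with $t\in[t_{k-1},t_k)$ we have $\theta(t)=t-t_{k-1}$ and $\Theta^\eps_{\lambda^\eps_t}=\lambda^\eps_t-\tau^\eps_{k-1}+\eps^p(B_{\lambda^\eps_t}-B_{\tau^\eps_{k-1}})$, using $f\equiv0$. Since $\lambda^\eps$ is linear on this interval, $|\lambda^\eps_t-\tau^\eps_{k-1}-(t-t_{k-1})|\le|J^\eps-1|\alpha\le2\delta$ by Lemma \ref{L:Jacobian-Estimates} (and similarly on $[t_\sfN,\sfT]$). The Brownian increment is at most $2\eps^p\sup|B|$. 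Off $G^\eps_\sfN$, both angles are bounded in absolute value by the elapsed time $\le\sfT$ plus $2\eps^p\sup|B|$, which is comfortably within $4\sfT+2\eps^p\sup|B|$.
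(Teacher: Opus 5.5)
Your angular estimates and your radial argument on $G^\eps_\sfN$ are fine. Gronwall on each inter-impact interval, followed by propagating the error through $|h(x)-h(y)|\le\|h'\|_\infty|x-y|$ across at most $\sfN+1$ intervals, is equivalent to the paper's approach of first unrolling the recursion into $\JJ^\eps_k,\LL^\eps_k,\MM^\eps_k$ and then applying one Gronwall argument to the running supremum.

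\textbf{The gap is the radial bound off $G^\eps_\sfN$.} Bounding the drift by $\sfK_b\sfT$ on every excursion leaves you with $e^{\lhp\Ren^\eps_t}\bigl(r_0+C(\Ren^\eps_t+1)+\cdots\bigr)$. Neither remedy you suggest proves the proposition. The constant $\lhp=\log(\|h'\|_\infty\vee 1)$ is fixed by Assumption~\ref{A:resetting-map}. Since $\Ren^\eps_t$ is unbounded, the factor $\Ren^\eps_t+1$ can only be absorbed by raising the exponent to $\lhp+\eta$, with $C=C_\eta$. For instance, when $\|h'\|_\infty\le 1$ the statement asserts $|R^\eps_t-r(t)|\le C\{1+\eps(\Ren^\eps_\sfT+1)\sup_{[0,\sfT]}|W|\}$ off $G^\eps_\sfN$, and your estimate does not give this.

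\textbf{The fix.} The drift acts on disjoint time intervals, so its contributions add up to at most $\sfK_b t$. On the $k$-th excursion use $|R^\eps_s|\le R^{\eps,+}_{\tau^\eps_{k-1}}+\sfK_b(s-\tau^\eps_{k-1})+\eps|W_s-W_{\tau^\eps_{k-1}}|$, so that
\[
R^{\eps,+}_{\tau^\eps_k}\le(\|h'\|_\infty\vee 1)\bigl(R^{\eps,+}_{\tau^\eps_{k-1}}+\sfK_b(\tau^\eps_k-\tau^\eps_{k-1})+\eps|W_{\tau^\eps_k}-W_{\tau^\eps_{k-1}}|\bigr).
\]
Iterate this, and use $(\|h'\|_\infty\vee1)^{j}\le e^{\lhp\Ren^\eps_t}$ for $j\le\Ren^\eps_t$, to get
\[
|R^\eps_t|\le e^{\lhp\Ren^\eps_t}\Bigl(r_0+\sfK_b t+2\eps(\Ren^\eps_t+1)\sup_{[0,\sfT]}|W|\Bigr).
\]
This is the paper's \eqref{E:R-B-estimate}, which it derives from the unrolled representation \eqref{E:R-B} with $h(x)=h'(\eta_x)x$. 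Only the Brownian increments, which do not telescope in absolute value, keep the factor $\Ren^\eps_t+1$, and that is exactly what the statement allows. Your weaker bound would still suffice for Theorem~\ref{T:LLN}, since its proof passes to $e^{2\beta(\lhp+1)\Ren^\eps_\sfT}$ anyway, but it is not the proposition as stated.

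\textbf{A point you share with the paper.} On the last interval $[t_\sfN,\sfT]$, the representation \eqref{E:R-time-changed} requires $\tau^\eps_{\sfN+1}>\sfT$. The conditions $|\tau^\eps_n-t_n|\le\delta$ for $n\le\sfN$ do not guarantee this. It should be built into $G^\eps_\sfN$, which Definition~\ref{D:rtd} allows, and doing so costs only exponentially small probability.
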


Proposition \ref{P:LLN-comparison} follows easily from the Lemmas below.

\begin{lemma}\label{L:LLN-comparison-G}
	For $\eps \in (0,1)$ small enough, $t \in [0,\sfT]$, we have
	\begin{equation*}
		\begin{aligned}
			|R^\eps_{\lambda^\eps_t}-r(t)|\cdot \ind_{G^\eps_\sfN} & \le C\left\{\|J^\eps_\cdot-1\|_\infty+\eps \sup_{0 \le s \le \sfT} |W_s|\right\}e^{Ct}\cdot \ind_{G^\eps_\sfN},\\
			|\Theta^\eps_{\lambda^\eps_t}-\theta(t)|\cdot \ind_{G^\eps_\sfN} &\le \left\{ \sum_{k \ge 1} \ind_{[t_{k-1},\thinspace t_k)}(t)\left(2\delta+ 2\varepsilon^p \sup_{[0,\thinspace \sfT]} |B_t|\right)\right\}\cdot \ind_{G^\eps_\sfN}.
		\end{aligned}
	\end{equation*}
	\quad where, $C$ depends on $\sfK_b$, $\|h^\prime\|$, $\sfT$, $\sfN$. 
\end{lemma}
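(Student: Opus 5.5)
On $G^\eps_\sfN$ the time distortion aligns the impact times, so we compare the two systems inter-impact interval by interval. The plan is an induction over the cycles $k=1,\dots,\sfN+1$, using a Gronwall estimate inside each cycle and the Lipschitz bound on $h$ across each impact.

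\emph{Angular component.} Since $f\equiv 0$, for $t\in[t_{k-1},t_k)$ we have $\lambda^\eps_t\in[\tau^\eps_{k-1},\tau^\eps_k)$ on $G^\eps_\sfN$. This gives
\[
\Theta^\eps_{\lambda^\eps_t}=\lambda^\eps_t-\tau^\eps_{k-1}+\eps^p\bigl(B_{\lambda^\eps_t}-B_{\tau^\eps_{k-1}}\bigr),\qquad \theta(t)=t-t_{k-1}.
\]
The difference is bounded by $|\lambda^\eps_t-t|+|\tau^\eps_{k-1}-t_{k-1}|+2\eps^p\sup_{[0,\sfT]}|B|$. Because $\lambda^\eps$ is piecewise linear with nodes $\lambda^\eps_{t_j}=\tau^\eps_j$, $|\lambda^\eps_t-t|$ is at most the maximum of $|\tau^\eps_j-t_j|$ over the two endpoints, hence at most $\delta$. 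The same argument applies on $[t_\sfN,\sfT]$, where the endpoint discrepancy is $\delta$ and $0$. This proves the second inequality.

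\emph{Radial component within a cycle.} Fix $k$ and $t\in[t_{k-1},t_k)$, and write $e_k\triangleq|R^{\eps,+}_{\tau^\eps_{k-1}}-r^+(t_{k-1})|$. Using \eqref{E:stoch-state-int-eq}, substitute $s=\lambda^\eps_u$ in the drift integral, so that $ds=J^\eps_u\,du$:
\[
R^\eps_{\lambda^\eps_t}-r(t)=\bigl(R^{\eps,+}_{\tau^\eps_{k-1}}-r^+(t_{k-1})\bigr)+\int_{t_{k-1}}^t\bigl[b(R^\eps_{\lambda^\eps_u})J^\eps_u-b(r(u))\bigr]du+\eps\bigl(W_{\lambda^\eps_t}-W_{\tau^\eps_{k-1}}\bigr).
\]
Split $bJ^\eps-b=b(J^\eps-1)+(b(R^\eps_{\lambda^\eps})-b(r))$. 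With $|b|\le\sfK_b$ and the Lipschitz bound, Gronwall on the cycle gives
\[
|R^\eps_{\lambda^\eps_t}-r(t)|\le\bigl(e_k+\sfK_b\alpha\|J^\eps-1\|_\infty+2\eps\sup|W|\bigr)e^{\sfK_b(t-t_{k-1})}.
\]
Letting $t\nearrow t_k$, and using that $\lambda^\eps_t\nearrow\tau^\eps_k$ by continuity of $\lambda^\eps$, the same bound holds for the left limits $|R^{\eps,-}_{\tau^\eps_k}-r^-(t_k)|$.

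\emph{Across impacts.} Since $h$ is $\|h'\|_\infty$-Lipschitz, $e_{k+1}\le\|h'\|_\infty|R^{\eps,-}_{\tau^\eps_k}-r^-(t_k)|$, and $e_1=0$. Iterating over at most $\sfN+1$ cycles yields
\[
e_k\le C\bigl(\|J^\eps-1\|_\infty+\eps\sup|W|\bigr),\qquad C=\sum_{j\le\sfN}\bigl(\|h'\|_\infty\vee1\bigr)^j e^{j\sfK_b\alpha}\,(\sfK_b\alpha+2).
\]
This $C$ depends only on $\sfK_b,\|h'\|,\sfT,\sfN$. Combining with the within-cycle bound and absorbing constants into $Ce^{Ct}$ gives the first inequality.

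\emph{Main obstacle.} The delicate step is the bookkeeping at the endpoints: the last interval $[t_\sfN,\sfT]$, and making sure that on $G^\eps_\sfN$ no extra impact $\tau^\eps_{\sfN+1}\le\sfT$ occurs. Since $\lambda^\eps$ maps $[t_\sfN,\sfT]$ onto $[\tau^\eps_\sfN,\sfT]$, the process must stay in cycle $\sfN+1$ there, which needs $\tau^\eps_{\sfN+1}>\sfT$. If the definition of $G^\eps_\sfN$ does not already guarantee this, I would enlarge it by $|\tau^\eps_{\sfN+1}-t_{\sfN+1}|\le\delta$, with $\delta<t_{\sfN+1}-\sfT$. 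This costs nothing, since Lemma \ref{L:prob-B} still bounds the complement.
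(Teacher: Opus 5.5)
Your proof is correct and is essentially the paper's argument. It uses the same change of variables $s=\lambda^\eps_u$, the same split $b(R^\eps_{\lambda^\eps})J^\eps-b(r)=b(R^\eps_{\lambda^\eps})(J^\eps-1)+\bigl(b(R^\eps_{\lambda^\eps})-b(r)\bigr)$, mean-value/Lipschitz control of $h$ at impacts, and Gronwall; you apply Gronwall cycle by cycle and iterate, while the paper unrolls everything into \eqref{E:R-error-G} and uses a single Gronwall on $[0,t]$.

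The obstacle you flag is real, and the paper passes over it silently. It uses the cycle-$(\sfN+1)$ formula on $[t_\sfN,\sfT]$ in \eqref{E:R-time-changed}, yet $G^{\eps,\delta}_\sfN$ only constrains $\tau^\eps_1,\dots,\tau^\eps_\sfN$. Your enlargement of the good event by $|\tau^\eps_{\sfN+1}-t_{\sfN+1}|\le\delta$ with $\delta<t_{\sfN+1}-\sfT$ is the appropriate repair.
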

\begin{lemma}\label{L:LLN-comparison-B}
	For $\eps \in (0,1)$ small enough, $t \in [0,\sfT]$, we have
	\begin{equation}\label{E:LLN-comparison-B}
		\begin{aligned}
			|R^\eps_{\lambda^\eps_t}-r(t)| \cdot \ind_{\Omega\setminus G^\eps_\sfN} &\le \left\{  e^{\lhp t/\alpha} (|r_0|+\sfK_b t) + e^{\lhp \Ren^\eps_t} \left(|r_0| + \sfK_b t + 2\eps(\Ren^\eps_t+1) \sup_{s \in [0,\sfT]}|W_s|\right)\right\} \ind_{\Omega \setminus G^\eps_\sfN},\\
			|\Theta^\eps_{\lambda^\eps_t}-\theta(t)| \cdot \ind_{\Omega\setminus G^\eps_\sfN} &\le \left\{ 4\sfT + 2\eps^p \sup_{[0, \sfT]} |B_t|\right\} \ind_{\Omega \setminus G^\eps_\sfN}.
		\end{aligned}
	\end{equation}
\end{lemma}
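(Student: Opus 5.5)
On $\Omega\setminus G^\eps_\sfN$ we have $\lambda^\eps_t=t$ by Definition \ref{D:rtd}. So the first inequality reduces to bounding $|R^\eps_t|+|r(t)|$ separately, and the second to bounding $|\Theta^\eps_t|+|\theta(t)|$. I would use the crude triangle inequality $|R^\eps_t-r(t)|\le |r(t)|+|R^\eps_t|$ and estimate each term by iterating over impact intervals.

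\textbf{Deterministic term.} From \eqref{E:det-state-int-eq}, on $[t_{k-1},t_k)$ we have $|r(t)|\le |r^+(t_{k-1})|+\sfK_b(t-t_{k-1})$. At an impact, \eqref{E:MVT-h} gives $|r^+(t_k)|=|h(r^-(t_k))|\le (\|h'\|_\infty\vee 1)|r^-(t_k)|$. Iterating over the $\ren(t)\le t/\alpha$ impacts, each step multiplies by at most $e^{\lhp}$ and adds $\sfK_b\alpha$. Since $e^{\lhp}\ge1$, induction yields $|r(t)|\le e^{\lhp\ren(t)}(|r_0|+\sfK_b t)\le e^{\lhp t/\alpha}(|r_0|+\sfK_b t)$. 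The induction hypothesis should be $|r(t)|\le e^{\lhp\ren(t)}(|r_0|+\sfK_b t)$: at the $k$-th impact we get $e^{\lhp}\cdot e^{\lhp(k-1)}(|r_0|+\sfK_b t_k)$, and subsequent growth adds $\sfK_b(t-t_k)\le e^{\lhp k}\sfK_b(t-t_k)$.

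\textbf{Stochastic term.} The same induction applies to \eqref{E:stoch-state-int-eq} on the random intervals $[\tau^\eps_{k-1},\tau^\eps_k)$, now with an extra noise increment $\eps(W_t-W_{\tau^\eps_{k-1}})$. Since $t\le\sfT$, each increment is bounded by $2\eps\sup_{s\in[0,\sfT]}|W_s|$. The claim is
\[
|R^\eps_t|\le e^{\lhp\Ren^\eps_t}\Bigl(|r_0|+\sfK_b t+2\eps(\Ren^\eps_t+1)\sup_{s\in[0,\sfT]}|W_s|\Bigr).
\]
I would prove this by induction on the number of impacts. Before the first impact the bound holds with factor $1$. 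At the $k$-th impact the left limit is multiplied by at most $e^{\lhp}$, after which drift and a new noise increment are added; the count of noise increments becomes $k+1=\Ren^\eps_t+1$. Summing the two terms gives the first line of \eqref{E:LLN-comparison-B}.

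\textbf{Angular component.} Here $f\equiv0$ and $\sigma=1$, so $\Theta^\eps_t=(t-\tau^\eps_{k-1})+\eps^p(B_t-B_{\tau^\eps_{k-1}})$ on the $k$-th interval. This gives $|\Theta^\eps_t|\le \sfT+2\eps^p\sup_{[0,\sfT]}|B|$, and also $|\theta(t)|\le\alpha$. Since the wedge has $\alpha<2\pi$, a bound like $4\sfT$ is generous. Strictly, one needs $\alpha\le 3\sfT$, which holds because $\sfT>\sfN\alpha\ge\alpha$; thus $\alpha+\sfT\le 4\sfT$.

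The only delicate step is the bookkeeping in the stochastic induction. The resetting factor also multiplies earlier noise contributions, and the induction hypothesis must be phrased so that the factor $e^{\lhp\Ren^\eps_t}$ covers everything. This works because $e^{\lhp}\ge1$.
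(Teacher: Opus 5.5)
Your proposal is correct and follows essentially the same route as the paper. On $\Omega\setminus G^\eps_\sfN$ you use $\lambda^\eps_t=t$ and the triangle inequality, then iterate the bound $|h(x)|\le\|h'\|_\infty|x|$ across impacts; your induction on the number of impacts is simply the inductive form of the paper's unrolled product--sum representations of $r(t)$ and $R^\eps_t$, and your angular bounds match the paper's. For the angular term you do not need $\alpha\le 3\sfT$, since $0\le\theta(t)\le t\le\sfT$ already suffices.
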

\begin{proof}[Proof of Proposition \ref{P:LLN-comparison}]
	The claim easily follows from Lemmas \ref{L:LLN-comparison-G} and \ref{L:LLN-comparison-B}.
\end{proof}
\begin{proof}[Proof of Theorem \ref{T:LLN}]
	For $\beta\in \{1, 2\}$, 
	\begin{equation*}
		\mathbb{E}\left[\sup_{0 \le t \le \sfT}|R^\eps_{\lambda^\eps_t}-r(t)|^\beta\right]= \mathbb{E}\left[\sup_{0 \le t \le \sfT}|R^\eps_{\lambda^\eps_t}-r(t)|^\beta \ind_{G^\eps_\sfN}\right]  +
		\mathbb{E}\left[\sup_{0 \le t\le\sfT}|R^\eps_{\lambda^\eps_t}-r(t)|^\beta\ind_{\Omega\setminus G^\eps_\sfN} \right] 
	\end{equation*}
	
	Taking the expectations in Proposition \ref{P:LLN-comparison} 
	\begin{equation*}
		\begin{aligned}
			\mathbb{E}\left[\sup_{0 \le t \le \sfT}|R^\eps_{\lambda^\eps_t}-r(t)|^\beta \ind_{G^\eps_\sfN}\right]\le C\,  \mathbb{E}\left[\|J^\eps_\cdot-1\|^\beta_\infty+\eps^\beta \sup_{0 \le s \le \sfT} |W_s|^\beta \right]\le C(\delta^\beta + \eps^\beta )
		\end{aligned}
	\end{equation*}
	where the last inequality is derived using Lemma \ref{L:Jacobian-Estimates} and Burkholder-Davis-Gundy inequality \cite{Oksendal}
	
	Again going back to Proposition \ref{P:LLN-comparison} and taking expectation yields, 
	
	\begin{multline*}
		\mathbb{E}\left[\sup_{0 \le t\le\sfT}|R^\eps_{\lambda^\eps_t}-r(t)|^\beta\ind_{\Omega\setminus G^\eps_\sfN} \right] \le \mathbb{E}\left[  Ce^{\varrho \Ren^\eps_\sfT}\left\{1 + \eps (\Ren^\eps_\sfT+1) \sup_{s \in [0,\sfT]} |W_s| \right\} ^\beta\ind_{\Omega\setminus G^\eps_\sfN}\right]\\
		\le\mathbb{E}\left[ \left\{ Ce^{\varrho \Ren^\eps_\sfT}\left\{1 + \eps (\Ren^\eps_\sfT+1) \sup_{s \in [0,\sfT]} |W_s| \right\} \right\}^{2\beta}\right]^{1/2}\mathbb{E}\left[ \ind_{\Omega \setminus G^\eps_\sfN}\right]^{1/2}
	\end{multline*}
	
	Now a few observations will helps to see that for some constant $C_\beta>0$ (indpendent of $\eps$) such that
	\begin{equation}\label{E:LLN-Constant-B}
		\mathbb{E}\left[ \left\{ Ce^{\varrho \Ren^\eps_\sfT}\left\{1 + \eps (\Ren^\eps_\sfT+1) \sup_{s \in [0,\sfT]} |W_s| \right\} \right\}^{2\beta}\right] \le C_\beta
	\end{equation} 
	
	Firstly we use $(a+b)^{2\beta}\le 2^{2\beta-1}(a^{2\beta}+b^{2\beta})$ to see
	\begin{equation*}
		\left\{ Ce^{\varrho \Ren^\eps_\sfT}\left\{1 + \eps (\Ren^\eps_\sfT+1) \sup_{s \in [0,\sfT]} |W_s| \right\} \right\}^{2\beta} \le C_\beta e^{2\beta(\varrho+1) \Ren^\eps_\sfT}\left(1+ \eps^{2\beta}  \sup_{s \in [0,\sfT]} |W_s|^{2\beta}\right)
	\end{equation*}
	
	Now the task reduces to show $$ \mathbb{E}\left[ e^{2\beta(\varrho+1) \Ren^\eps_\sfT}\left(1+ \eps^{2\beta}  \sup_{s \in [0,\sfT]} |W_s|^{2\beta}\right)\right] <\infty$$
	
	Use H\"older's inequality to get
	\begin{equation*}
		\mathbb{E}\left[ e^{2\beta(\varrho+1) \Ren^\eps_\sfT}\left(1+ \eps^{2\beta}  \sup_{s \in [0,\sfT]} |W_s|^{2\beta}\right)\right]^2\le \mathbb{E}\left[e^{4\beta(\varrho+1) \Ren^\eps_\sfT}\right] \mathbb{E}\left[\left(1+ \eps^{2\beta}  \sup_{s \in [0,\sfT]} |W_s|^{2\beta}\right)^2\right]
	\end{equation*}
	
	A simple algebra and the use of Burkholder-Davis-Gundy inequality \cite{Oksendal} will show that \begin{equation*}
		\mathbb{E}\left[\left(1+ \eps^{2\beta}  \sup_{s \in [0,\sfT]} |W_s|^{2\beta}\right)^2\right]\le C_\beta \sfT_\beta
	\end{equation*}
	
	Recalling Lemma \ref{L:Renewal-Process}
	\begin{equation*}
		\begin{aligned}
			\mathbb{E}\left[e^{4\beta(\varrho+1) \Ren^\eps_\sfT}\right]\le \kappa(\lambda) e^{\lambda \sfT},   \text{ for some constant $\kappa(\lambda)$, where $\lambda> \frac{4\beta(\varrho+1)}{\alpha}+ \frac{(4\beta(\varrho+1))^2}{2\alpha^2} $} 
		\end{aligned}
	\end{equation*}
	
	Stitching everything together we are able to show \eqref{E:LLN-Constant-B}.
	
	Therefore,  
	\begin{equation*}
		\mathbb{E}\left[\sup_{0 \le t\le\sfT}|R^\eps_{\lambda^\eps_t}-r(t)|^\beta\ind_{\Omega\setminus G^\eps_\sfN} \right] 
		\le C_\beta\,  \mathbb{P}\left(\Omega \setminus G^\eps_\sfN\right)^{1/2} , \text{for some constant $C_\beta >0$}
	\end{equation*}
It is to be noted that \begin{equation}\label{E:small-asymp}
	\exp\left(-\frac{1}{8\alpha}\frac{\delta^2}{\eps^{2p}}\right)
	= o(\eps^N),
\end{equation}
for any fixed $p>1$, any $\nu \in (1,p)$, and any positive integer $N$, provided $\eps \in (0,1)$ is sufficiently small, where $\delta = \eps^\nu$.

	Consequently, 
	\begin{equation*}
		\begin{aligned}
			\mathbb{E}\left[\sup_{0 \le t \le \sfT}|R^\eps_{\lambda^\eps_t}-r(t)|^\beta \right]& \le C_\beta\left(\delta^\beta + \eps^\beta +  \mathbb{P}\left(\Omega \setminus G^\eps_\sfN\right)^{1/2} \right) \\
			& \le C_\beta\left(\delta^\beta + \eps^\beta + \sqrt{\frac{\eps^p}{\delta}} \exp\left(-\frac{1}{8\alpha}\frac{\delta^2}{\eps^{2p}}\right)\right)\le C_\beta \eps^\beta 
		\end{aligned}
	\end{equation*}

We next delve ourselves in proving \begin{equation*}
		\mathbb{E}\left[\sup_{0 \le t \le \sfT}|\Theta^\eps_{\lambda^\eps_t}-\theta(t)|^\beta \right]\le C_\beta \eps^\beta
	\end{equation*}
Visiting back to the Proposition \ref{P:LLN-comparison} and taking expectation
	\begin{equation*}
		\begin{aligned}
			&\mathbb{E}\left[\sup_{0 \le t \le \sfT}|\Theta^\eps_{\lambda^\eps_t}-\theta(t)|^\beta\right]\\
			&\le \mathbb{E}\left[\sup_{0 \le t \le \sfT}|\Theta^\eps_{\lambda^\eps_t}-\theta(t)|^\beta \ind_{G^\eps_\sfN}\right]  +
			\mathbb{E}\left[\sup_{0 \le t\le\sfT}|\Theta^\eps_{\lambda^\eps_t}-\theta(t)|^\beta\ind_{\Omega\setminus G^\eps_\sfN} \right] \\
			&\le C_\beta\mathbb{E}\left[\left(\delta^\beta+ \varepsilon^{p\beta}\sup_{[0,\thinspace \sfT]} |B_t|^\beta\right) \ind_{G^\eps_\sfN}+ \left((4\sfT)^\beta + (2\eps^p)^\beta \sup_{[0, \sfT]} |B_t|^\beta\right)\ind_{\Omega\setminus G^\eps_\sfN}\right]\\
			&\le C_\beta \left(\delta^\beta +\eps^{p\beta} \mathbb{E}\left(\sup_{[0,\thinspace \sfT]} |B_t|^\beta\right)\right)+\left[ \mathbb{E}\left((4\sfT)^\beta + 2\eps^{p\beta} \sup_{[0, \sfT]} |B_t|^\beta\right)^2\right]^{1/2} \left(\mathbb{P}(\Omega\setminus G^\eps_\sfN)\right)^{1/2}
		\end{aligned}
	\end{equation*}
	\noindent
	Again the use of Burkholder-Davis-Gundy \cite{KS91} inequality will provide $\mathbb{E}\left[\sup_{[0,\thinspace \sfT]} |B_t|^\beta\right]\le C_\beta \sfT^{\beta/2} $, for some constant $C_\beta>0$.\\
	\noindent
	Opening the square and using Burkholder-Davis-Gundy inequality \cite{Oksendal} will show us 
	
	\begin{equation*}
		\begin{aligned}
			\mathbb{E}\left((4\sfT)^\beta + (2\eps)^{p\beta} \sup_{[0, \sfT]} |B_t|^\beta\right)^2\le C_\beta^2,
		\end{aligned}
	\end{equation*}
	\text{for some constant $C_\beta>0$}. 
	Therefore, 
	\begin{equation*}
		\begin{aligned}
			\mathbb{E}\left[\sup_{0 \le t \le \sfT}|\Theta^\eps_{\lambda^\eps_t}-\theta(t)|^\beta\right]\le C\left(\delta^\beta +\eps^{p\beta}+ \sqrt{\frac{\eps^p}{\delta}} \exp\left(-\frac{1}{8\alpha}\frac{\delta^2}{\eps^{2p}}\right)\right)\le C\eps^\beta \quad \text{, (using \ref{E:small-asymp})}
		\end{aligned}
	\end{equation*}
	
	Consequently,
	\begin{equation*}
		\mathbb{E}\left[\sup_{0 \le t\le\sfT} |X^\eps_t- x(t) |\right]\le  C\eps^\beta
	\end{equation*}
\end{proof}


\section{Analysis of Fluctuations}\label{S:CLT}
Analogous to Section \ref{S:LLN}, the current contains the proof of Theorem \ref{T:CLT}, splitting the reasoning into several lemmas.We begin by stating several lemmas that facilitate the estimation of the quantity. $|R^\eps_{\lambda^\eps_t}-r(t)-\eps R^1_t|$ on $G^\eps_\sfN$ and $\Omega\setminus G^\eps_\sfN$. Again the proofs of the lemmas are given in Section \ref{S:Lemmas}
\begin{proposition}\label{P:CLT-comparison}
There exists a constant $C=C(\sfT)>0$ such that for $\eps \in (0,1)$, we have
\begin{equation*}
\begin{aligned}
|R^\eps_{\lambda^\eps_t}-r(t)-\eps R^1_t| &\le C \left\{\|J^\eps_\cdot-1\|_\infty+ \sup_{[0, \sfT]} |R^\eps_{\lambda^\eps_t} - r(t)|^2 +\eps \sup_{\substack{0 \le s<t \le \sfT\\ |t-s|<\delta}} |W_t - W_s|\right\} \ind_{G^\eps_\sfN} \\ & \qquad+Ce^{\varrho \Ren^\eps_t}\left\{1 + (\eps (\Ren^\eps_\sfT+1)+1) \sup_{s \in [0,\sfT]} |W_s| \right\} \ind_{\Omega\setminus G^\eps_\sfN}
\end{aligned}
\end{equation*}
\end{proposition}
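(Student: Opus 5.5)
Following the pattern used for Proposition \ref{P:LLN-comparison}, I would split the estimate according to whether $\omega \in G^\eps_\sfN$ or $\omega \in \Omega\setminus G^\eps_\sfN$. Each piece would be stated as a separate lemma, analogous to Lemmas \ref{L:LLN-comparison-G} and \ref{L:LLN-comparison-B}, and the proposition then follows by adding the two bounds.

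\emph{The bad event.} On $\Omega\setminus G^\eps_\sfN$ we have $\lambda^\eps_t = t$, so I use the crude bound
\[
|R^\eps_t - r(t) - \eps R^1_t| \le |R^\eps_t - r(t)| + \eps|R^1_t|.
\]
The first term is already controlled by Lemma \ref{L:LLN-comparison-B}. For the second, I apply Gronwall on each interval $[t_{k-1},t_k)$ using $|b'|\le \sfK_b$, and at each reset use $|R^{1,+}_{t_k}| \le \|h'\|_\infty |R^{1,-}_{t_k}|$. Iterating over the deterministic number $\ren(\sfT)\le \sfN$ of resets gives $|R^1_t| \le C \sup_{[0,\sfT]}|W_s|$. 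This accounts for the extra ``$+1$'' multiplying $\sup|W|$ in the statement. All the resulting terms can be absorbed into $Ce^{\varrho\Ren^\eps_t}\{1+(\eps(\Ren^\eps_\sfT+1)+1)\sup|W|\}$.

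\emph{The good event.} This is the main work. Fix $t\in[t_{k-1},t_k)$ with $k\le \sfN$ (or $t\in[t_\sfN,\sfT]$), so that $\lambda^\eps_t\in[\tau^\eps_{k-1},\tau^\eps_k)$. Set $E_t \triangleq R^\eps_{\lambda^\eps_t}-r(t)-\eps R^1_t$. Substituting $s=\lambda^\eps_u$ in the integral form \eqref{E:stoch-state-int-eq}, the post-impact value is $R^{\eps,+}_{\tau^\eps_{k-1}}$ and
\[
R^\eps_{\lambda^\eps_t} = R^{\eps,+}_{\tau^\eps_{k-1}} + \int_{t_{k-1}}^t b(R^\eps_{\lambda^\eps_u})J^\eps_u\,du + \eps\bigl(W_{\lambda^\eps_t}-W_{\tau^\eps_{k-1}}\bigr).
\]
Subtracting the equations for $r$ and $\eps R^1$ leaves four pieces:
\begin{itemize}
\item[(i)] $\int b(R^\eps_{\lambda^\eps_u})(J^\eps_u-1)\,du$, bounded by $C\|J^\eps-1\|_\infty$;
\item[(ii)] a Taylor remainder $b(R^\eps_{\lambda^\eps_u}) - b(r(u)) - b'(r(u))(R^\eps_{\lambda^\eps_u}-r(u)) = \mathscr{O}(|R^\eps_{\lambda^\eps_u}-r(u)|^2)$, controlled by $\sfK_b$, together with the linear part $b'(r(u))E_u$;
\item[(iii)] the noise mismatch $\eps[(W_{\lambda^\eps_t}-W_t)-(W_{\tau^\eps_{k-1}}-W_{t_{k-1}})]$, bounded by twice the modulus term, since $|\lambda^\eps_t - t|\le\delta$ on $G^\eps_\sfN$;
\item[(iv)] the reset mismatch $R^{\eps,+}_{\tau^\eps_{k-1}} - r^+(t_{k-1}) - \eps R^{1,+}_{t_{k-1}}$.
\end{itemize}
For (iv), note $R^{\eps,-}_{\tau^\eps_{k-1}} = \lim_{u\nearrow t_{k-1}} R^\eps_{\lambda^\eps_u}$. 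A second-order Taylor expansion of $h$ about $r^-(t_{k-1})$ then gives $|E^+_{t_{k-1}}| \le \|h'\|_\infty |E^-_{t_{k-1}}| + C|R^{\eps,-}_{\tau^\eps_{k-1}}-r^-(t_{k-1})|^2$.

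This requires $h''$ to be bounded. The stated Assumption \ref{A:resetting-map} only says ``continuous bounded derivatives'', so I would read it as including the second derivative. This is the main obstacle I expect. If $h''$ is not available, I would fall back on the mean value form, $h'(\eta)-h'(r^-)$, together with uniform continuity of $h'$; that yields a weaker bound which would not have the stated quadratic form.

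\emph{Closing the argument.} Gronwall on each interval gives $|E_t| \le e^{\sfK_b\alpha}\bigl(|E^+_{t_{k-1}}| + C\{\cdots\}\bigr)$, with $\{\cdots\}$ the three terms listed in the statement. Iterating over the at most $\sfN$ resets, using the bound on $|E^+_{t_{k-1}}|$ from (iv), produces a constant of order $(\|h'\|_\infty\vee1)^{\sfN}e^{C\sfT}$, which depends only on $\sfT$. The squared error is bounded by $\sup_{[0,\sfT]}|R^\eps_{\lambda^\eps}-r|^2$, which is exactly the form stated in the proposition.
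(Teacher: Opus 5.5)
Your proposal is correct and follows the paper's route: the paper also splits into $G^\eps_\sfN$ and its complement (Lemmas \ref{L:CLT-comparison-G} and \ref{L:CLT-comparison-B}), uses second-order Taylor expansions of $b$ and of $h$, controls the noise mismatch by the modulus of $W$, and on the bad set bounds $|R^\eps_t|+|r(t)|+\eps|R^1_t|$ using a Gronwall estimate for $R^1$ over the deterministic impact times. The differences are cosmetic: the paper unrolls the recursion across impacts into one global expression and applies Gronwall once, which produces an extra $\int_0^t\sup_{u\le s}|R^\eps_{\lambda^\eps_u}-r(u)|^2\,ds$ term absorbed into $\sfT\sup_{[0,\sfT]}|R^\eps_{\lambda^\eps_t}-r(t)|^2$, and it likewise tacitly assumes bounded $h''$ (its constant depends on $\|h''\|$), as you anticipated.
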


Proposition \ref{P:CLT-comparison} follows easily from the Lemmas below.

\begin{lemma}\label{L:CLT-comparison-G}
There exists a constant $C_{\ref{L:CLT-comparison-G}}>0$ depending on $\sfK_b$, $\|h^\prime\|$, $\|h^{\prime\prime}\|$,  $\sfT$, $\sfN$ such that for $\eps \in (0,1)$ small enough, $t \in [0,\sfT]$, we have
\begin{equation*}
\begin{aligned}
|R^\eps_{\lambda^\eps_t}-r(t)-\eps R^1_t|\cdot \ind_{G^\eps_\sfN} & \le C_{\ref{L:CLT-comparison-G}}\left\{\|J^\eps_\cdot-1\|_\infty+ \sup_{[0, \sfT]} |R^\eps_{\lambda^\eps_t} - r(t)|^2 + \int_0^t \sup_{0 \le u \le s}|R^\eps_{\lambda^\eps_u}-r(u)|^2 \thinspace ds \right.\\& \left. +\eps \sup_{\substack{0 \le s<t \le \sfT\\ |t-s|<\delta}} |W_t - W_s|\right\}e^{C_{\ref{L:CLT-comparison-G}}t}\cdot \ind_{G^\eps_\sfN}
\end{aligned}
\end{equation*}
\end{lemma}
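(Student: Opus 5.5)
On $G^\eps_\sfN$ I will write the time-changed process $R^\eps_{\lambda^\eps_t}$ as an integral equation in the deterministic clock $t$, subtract the equations for $r(t)$ and $\eps R^1_t$, and close with a Taylor expansion followed by Gronwall's inequality within each interval $[t_{k-1},t_k)$.

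\textbf{Step 1: time change.} Since $\lambda^\eps_{t_k}=\tau^\eps_k$, for $t\in[t_{k-1},t_k)$ the substitution $s=\lambda^\eps_u$ (with $ds=J^\eps_u\,du$) gives
\begin{equation*}
R^\eps_{\lambda^\eps_t}=R^{\eps,+}_{\tau^\eps_{k-1}}+\int_{t_{k-1}}^t b(R^\eps_{\lambda^\eps_u})J^\eps_u\,du+\eps\bigl(W_{\lambda^\eps_t}-W_{\lambda^\eps_{t_{k-1}}}\bigr).
\end{equation*}
The last interval $[t_\sfN,\sfT]$ is handled the same way.

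\textbf{Step 2: the error within one interval.} Set $E_t\triangleq R^\eps_{\lambda^\eps_t}-r(t)-\eps R^1_t$. Subtracting the equations for $r$ and $\eps R^1$, the drift part splits as
\begin{equation*}
b(R^\eps_{\lambda^\eps_u})(J^\eps_u-1)+\bigl[b(R^\eps_{\lambda^\eps_u})-b(r(u))-b'(r(u))(R^\eps_{\lambda^\eps_u}-r(u))\bigr]+b'(r(u))E_u .
\end{equation*}
\begin{itemize}
\item The first term is bounded by $\sfK_b\|J^\eps_\cdot-1\|_\infty$.
\item The bracket is bounded by $\tfrac12\sfK_b\sup_{0\le v\le u}|R^\eps_{\lambda^\eps_v}-r(v)|^2$, by Taylor's theorem with Assumption \ref{A:vf-wedge}; integrating it produces the $\int_0^t\sup_{0\le v\le s}|\cdot|^2\,ds$ term in the statement.
\item The third term is linear in $E_u$ with coefficient at most $\sfK_b$.
\end{itemize}
The noise part is $\eps[(W_{\lambda^\eps_t}-W_t)-(W_{\lambda^\eps_{t_{k-1}}}-W_{t_{k-1}})]$. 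On $G^\eps_\sfN$ we have $|\lambda^\eps_t-t|\le\delta$, as $\lambda^\eps$ is piecewise linear interpolating $|\tau^\eps_k-t_k|\le\delta$. So this part is bounded by $2\eps\sup_{|t-s|<\delta}|W_t-W_s|$ (taking $\le\delta$ versus $<\delta$ loosely). Gronwall's inequality on the interval then gives
\begin{equation*}
|E_t|\le \bigl(|E^+_{t_{k-1}}|+C\{\cdots\}\bigr)e^{C(t-t_{k-1})},
\end{equation*}
where $\{\cdots\}$ denotes the braced quantity in the lemma.

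\textbf{Step 3: the jumps.} At $t_k$ we have $E^+_{t_k}=h(R^{\eps,-}_{\tau^\eps_k})-h(r^-(t_k))-\eps h'(r^-(t_k))R^{1,-}_{t_k}$. Writing $R^{\eps,-}_{\tau^\eps_k}=\lim_{t\uparrow t_k}R^\eps_{\lambda^\eps_t}$ and Taylor expanding $h$ gives
\begin{equation*}
|E^+_{t_k}|\le\|h'\|_\infty|E^-_{t_k}|+\tfrac12\|h''\|_\infty\sup_{[0,\sfT]}|R^\eps_{\lambda^\eps_\cdot}-r|^2 .
\end{equation*}
This uses $h''$, which explains the $\|h''\|$ dependence of the constant. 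Iterating over the at most $\sfN+1$ intervals, starting from $E_0=0$, yields the claim with $C$ depending on $\sfN$, $\|h'\|$, $\|h''\|$, $\sfK_b$ and $\sfT$. The exponent $e^{Ct}$ absorbs the factors accumulated at each jump.

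\textbf{Main obstacle.} The delicate point is the jump step. One must check that the left limits match, i.e.\ that $\lambda^\eps$ is continuous at $t_k$ so that $R^\eps_{\lambda^\eps_{t_k}-}=R^{\eps,-}_{\tau^\eps_k}$. One must also note that the quadratic remainder involves the full sup of $|R^\eps_{\lambda^\eps}-r|$ rather than a local quantity. I expect both to go through, since on $G^\eps_\sfN$ the jumps are exactly aligned. The Brownian modulus term requires only the bound $|\lambda^\eps_t-t|\le\delta$.
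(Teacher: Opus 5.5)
Your proposal is correct and follows essentially the paper's argument. The shared ingredients are the time change with $J^\eps$, the second-order Taylor expansion of $b$ about $r$ that leaves a linear term $b^\prime(r)E$, the Taylor expansion of $h$ at the aligned jumps (which produces the $\|h^{\prime\prime}\|$ term), the bound $|\lambda^\eps_t-t|\le\delta$ for the Brownian increments, and Gronwall. The only difference is organizational: the paper first unrolls the jump recursion into an explicit formula with products of $h^\prime(r^-(t_j))$ and applies a single Gronwall to $\sup_{u\le t}|E_u|$, whereas you apply Gronwall on each interval and then iterate the jump inequality, which gives the same constant dependence.
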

\begin{remark}\label{r:angular}
	It is to be noted that the stated lemmas and proposition does not involve any estimates in the angular coordinate. The reason for that is that the fluctuation process $\Theta^1\equiv 0$, for any $p>1$, leading to the same estimates as stated in the lemmas \ref{L:LLN-comparison-G} and \ref{L:LLN-comparison-B}, and consequently to estimates in the proof of Theorem \ref{T:LLN} 
\end{remark}

\begin{lemma}\label{L:CLT-comparison-B}
For $\eps \in (0,1)$ small enough, $t \in [0,\sfT]$, we have
\begin{equation*}
\begin{aligned}
|R^\eps_{\lambda^\eps_t}-r(t)-\eps R^1_t| \cdot \ind_{\Omega\setminus G^\eps_\sfN} & \le   \left\{  e^{\lhp t/\alpha} (|r_0|+\sfK_b t) + e^{\lhp \Ren^\eps_t} \left(|r_0| + \sfK_b t + 2\eps(\Ren^\eps_t+1) \sup_{t \in [0,\sfT]}|W_t|\right) \right. \\ & \left. \qquad + \eps\,  2(\ren(t)+1) e^{(2\lhp/\alpha + \sfK_b e^{\lhp t/\alpha})t}  \sup_{[0, \sfT]} |W_t|\right\} \ind_{\Omega \setminus G^\eps_\sfN}
\end{aligned}
\end{equation*}
\end{lemma}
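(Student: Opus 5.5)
The plan is to use the triangle inequality on $\Omega\setminus G^\eps_\sfN$,
\[
|R^\eps_{\lambda^\eps_t}-r(t)-\eps R^1_t| \le |R^\eps_{\lambda^\eps_t}-r(t)| + \eps |R^1_t|,
\]
and to bound the two terms separately. On $\Omega\setminus G^\eps_\sfN$ we have $\lambda^\eps_t=t$ by Definition \ref{D:rtd}. The first term is then exactly the quantity controlled in the first line of \eqref{E:LLN-comparison-B} in Lemma \ref{L:LLN-comparison-B}, which we invoke directly. It yields the first two groups of terms in the claimed bound, $e^{\lhp t/\alpha}(|r_0|+\sfK_b t)$ (from $|r(t)|$) and $e^{\lhp \Ren^\eps_t}(|r_0|+\sfK_b t+2\eps(\Ren^\eps_t+1)\sup|W|)$ (from $|R^\eps_t|$). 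So it remains to prove a pathwise bound on the fluctuation process:
\[
|R^1_t| \le 2(\ren(t)+1)\, e^{(2\lhp/\alpha + \sfK_b e^{\lhp t/\alpha})t}\sup_{[0,\sfT]}|W_s|.
\]
This bound holds on all of $\Omega$, not only on $\Omega\setminus G^\eps_\sfN$.

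To bound $R^1$, we work interval by interval using \eqref{E:fluct-proc}. On $[t_{k-1},t_k)$,
\[
R^1_t = R^{1,+}_{t_{k-1}} + \int_{t_{k-1}}^t b'(r(s))R^1_s\,ds + (W_t-W_{t_{k-1}}).
\]
Since $|b'|\le \sfK_b$ and $|W_t-W_{t_{k-1}}|\le 2\sup|W|$, Gronwall's inequality on this interval gives
\[
\sup_{[t_{k-1},t)}|R^1| \le \bigl(|R^{1,+}_{t_{k-1}}| + 2\sup|W|\bigr)e^{\sfK_b(t-t_{k-1})}.
\]
At each jump we have $|R^{1,+}_{t_k}| \le \|h'\|_\infty |R^{1,-}_{t_k}| \le e^{\lhp}|R^{1,-}_{t_k}|$. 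Iterating the interval bound and the jump bound over the at most $\ren(t)$ jumps, starting from $R^{1,+}_0=0$, produces a sum of at most $\ren(t)+1$ terms. Each term is bounded by $2\sup|W|\,e^{\lhp \ren(t)}e^{\sfK_b t}$, and $\ren(t)\le t/\alpha$, so we obtain
\[
|R^1_t| \le 2(\ren(t)+1)e^{\lhp t/\alpha + \sfK_b t}\sup|W|.
\]
This is already dominated by the stated exponent, because $e^{\lhp t/\alpha}\ge 1$ and $2\lhp/\alpha \ge \lhp/\alpha$.

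The only mildly delicate step is the bookkeeping of the product of growth factors across the jumps, and keeping the constants in exactly the form the lemma states. The stated form has a cruder exponent, $\sfK_b e^{\lhp t/\alpha}$, which suggests the authors ran Gronwall globally over $[0,t]$ with the jump amplification absorbed into the coefficient. Either route gives a bound at least as strong as the one claimed. Once both terms are bounded, we multiply by $\ind_{\Omega\setminus G^\eps_\sfN}$ and combine them to conclude.
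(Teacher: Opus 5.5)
Your proposal is correct and follows essentially the paper's route: split off $\eps|R^1_t|$ by the triangle inequality, reuse Lemma \ref{L:LLN-comparison-B} for the $|R^\eps_t|+|r(t)|$ part on $\Omega\setminus G^\eps_\sfN$ (where $\lambda^\eps_t=t$), and bound $R^1$ pathwise via Gronwall. The only difference is in that last step: the paper unwinds the jump recursion for $R^1$ into one formula on $[0,t]$ and applies Gronwall once, absorbing the amplification $(\|h^\prime\|_\infty\vee 1)^{\ren(t)}$ into the coefficient, which is why its exponent contains the cruder $\sfK_b e^{\lhp t/\alpha}t$; your interval-by-interval Gronwall gives the sharper $e^{\lhp t/\alpha+\sfK_b t}$, and this indeed implies the stated bound.
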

 
 \begin{proof}[Proof of Proposition \ref{P:CLT-comparison}]
 	The claim easily follows from Lemmas \ref{L:CLT-comparison-G} and \ref{L:CLT-comparison-B}.
 \end{proof}
 To proceed, we will find it useful to recall some notation and results from \cite{baldi1992large} which will help us estimate moments of the H\"older norm of Brownian motion. 
 For $\alpha \in (0,1]$, let $\mathcal{C}^\alpha$ denote the Banach space of all $\alpha$-H\"older paths 
 $x : [0,\sfT] \to \mathbb{R}$ such that $x(0)=0$, equipped with the norm
 \[
 \|x\|_\alpha 
 = \sup_{0 \le s<t \le \sfT} \frac{|x(t)-x(s)|}{|t-s|^\alpha}, \quad \text{and for $\delta \in (0,1)$, set} \quad \sfw_x(\delta)
 = \sup_{\substack{0 \le s< t \le \sfT \\ |t-s|\le \delta}}
 \frac{|x(t)-x(s)|}{|t-s|^\alpha};
 \]
 the modulus of continuity of $x$ is thus $\delta^\alpha \sfw_x(\delta)$.
 Let $\mathcal{C}^{\alpha,0}$ denote the closed subspace of $\mathcal{C}^\alpha$ consisting of all paths $x$ such that $\displaystyle
 \lim_{\delta \to 0} \sfw_x(\delta)=0$; we note that the Banach space $\mathcal{C}^{\alpha,0}$ is separable. 
 Owing to the fact that on every bounded interval $[0,\sfT]$, the sample paths of Brownian motion $B_t$ are Holder-continuous with exponent $\alpha$ for every $\alpha < 1/2$, the paths of $B_t$ belong to $\mathcal{C}^{\alpha,0}$. We would like to show $\BE[\|B\|_\alpha]<\infty$. To this end, Fernique's theorem, stated next, will be useful.
 
 \begin{theorem}[Fernique's  theorem {\cite{fernique1970integrabilite}}]
 	Let $(E,\|\cdot\|)$ be a \emph{separable Banach space} and let $\mu$ be a \emph{centered Gaussian measure} on $E$.
 	Equivalently, let $X$ be an $E$-valued random variable such that for every bounded linear functional $\ell \in E^{*}$, the real random variable $\ell(X)$ is centered Gaussian. There exists a constant $v>0$ such that
 	$\displaystyle
 	\int_E \exp\!\bigl(v \|x\|^2\bigr)\, d\mu(x) < \infty,
 	$ or equivalently, $ \mathbb{E}\!\left[\exp\!\bigl(v \|X\|^2\bigr)\right] < \infty.$ In particular, $\|X\|$ has finite moments of all orders:
 	$
 	\mathbb{E}[\|X\|^m] < \infty \text{ for all } m>0.
 	$
 \end{theorem}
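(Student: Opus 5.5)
We will follow the classical rotation-invariance argument of Fernique. We do not need anything about Brownian motion or about the impulsive system here; the statement is purely about a centered Gaussian $E$-valued random variable $X$ on a separable Banach space. The first step is to record that the norm $\|X\|$ is a genuine random variable and that laws on $E$ are determined by finite-dimensional projections. By separability and Hahn--Banach there is a countable family $\{\ell_j\}\subset E^*$ with $\|\ell_j\|\le 1$ and $\|x\|=\sup_j|\ell_j(x)|$ for all $x\in E$, and the Borel $\sigma$-algebra of $E$ is generated by the $\ell_j$.

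Next, let $Y$ be an independent copy of $X$, and set $U\triangleq (X+Y)/\sqrt2$, $V\triangleq (X-Y)/\sqrt2$. For any finite collection of functionals, the vector $(\ell_i(U),\ell_k(V))$ is centered Gaussian. Its covariance equals that of $(\ell_i(X),\ell_k(Y))$, because the cross terms cancel: $\BE[\ell_i(U)\ell_k(V)]=\tfrac12(\BE[\ell_i(X)\ell_k(X)]-\BE[\ell_i(Y)\ell_k(Y)])=0$. Since the $\ell_j$ generate the Borel structure, $(U,V)$ has the same law as $(X,Y)$; that is, $U$ and $V$ are independent copies of $X$. This is the step I expect to require the most care, since it is exactly where separability enters. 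Once it is in place, the rest is elementary.

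With the rotation invariance in hand, fix $0<s<t$. On the event $\{\|V\|\le s,\ \|U\|>t\}$ the triangle inequality gives $\sqrt2\|X\|\ge\|U\|-\|V\|>t-s$, and likewise $\sqrt2\|Y\|>t-s$. Using independence on both sides we obtain
\begin{equation*}
\BP(\|X\|\le s)\,\BP(\|X\|>t)\le \BP\bigl(\|X\|>(t-s)/\sqrt2\bigr)^2 .
\end{equation*}
Choose $s$ so that $q\triangleq\BP(\|X\|\le s)\ge 3/4$, which is possible because $\|X\|<\infty$ a.s. Define $t_0=s$ and $t_{n+1}=s+\sqrt2\,t_n$, so that $t_n=s(\sqrt2^{\,n+1}-1)/(\sqrt2-1)\le C\,2^{n/2}$. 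Writing $x_n\triangleq \BP(\|X\|>t_n)/q$, the inequality gives $x_{n+1}\le x_n^2$, hence $x_n\le x_0^{2^n}$ with $x_0=(1-q)/q\le 1/3$. This yields the Gaussian tail
\begin{equation*}
\BP(\|X\|>t_n)\le e^{-2^n\log 3}\le e^{-c\,t_n^2},
\end{equation*}
with $c>0$ depending only on $s$.

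Finally, we sum over the shells $\{t_n<\|X\|\le t_{n+1}\}$:
\begin{equation*}
\BE[e^{v\|X\|^2}]\le e^{v t_0^2}+\sum_{n\ge0}e^{v t_{n+1}^2}\,\BP(\|X\|>t_n).
\end{equation*}
Since $t_{n+1}^2\le 2t_n^2+C'$ up to constants, the series converges as soon as $2v<c$ (one can equally compare $e^{vC^2 2^{n+1}}$ with $3^{-2^n}$). Choosing such a $v>0$ proves the exponential integrability. The finiteness of all moments then follows from $x^m\le C_{m,v}e^{vx^2}$ for $x\ge0$.
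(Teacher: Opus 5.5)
Your proof is correct. It is the classical rotation-invariance argument, essentially Fernique's original one, and each step checks out, including the recursion $x_{n+1}\le x_n^2$ obtained by taking $t=t_{n+1}$ so that $(t-s)/\sqrt2=t_n$.

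The paper does not prove this statement at all. It imports the result from \cite{fernique1970integrabilite} and uses it only in Lemma \ref{L:Holder-norm}. The citation buys brevity. Your self-contained version shows exactly where separability is used:
\begin{itemize}
\item to make $\|X\|=\sup_j|\ell_j(X)|$ measurable;
\item to identify the law of $(U,V)$ on $E\times E$ from its finite-dimensional projections via a $\pi$--$\lambda$ argument.
\end{itemize}
This is why the paper has to work in the separable space $\mathcal{C}^{\alpha,0}$ rather than in $\mathcal{C}^{\alpha}$.

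There is one small inaccuracy. Since $t_{n+1}^2=2t_n^2+2\sqrt2\,s\,t_n+s^2$, it is not bounded by $2t_n^2+C'$, because of the term linear in $t_n$. This is harmless. When $2v<c$, that term is dominated by $(c-2v)t_n^2$. Your alternative comparison, $e^{vC^2 2^{n+1}}3^{-2^n}=\exp\bigl(2^n(2vC^2-\log 3)\bigr)$, gives summability under the same condition $2v<c$ with $c=\log 3/C^2$.
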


 \begin{lemma}\label{L:Holder-norm}
 	Let $B_t$ be a standard one-dimensional Brownian motion. Then, for any fixed $\sfT>0$, $\alpha \in (0,\tfrac{1}{2})$, we have
 	\begin{equation}\label{E:Holder-norm-moment}
 		C_\alpha \triangleq \BE[\|B\|_\alpha] < \infty, \quad \text{and further} \quad 
 		\BE\left[\sup_{\substack{0 \le s < t \le \sfT\\|t-s| \le \delta}} |B_t - B_s| \right] \le C_\alpha \delta^\alpha \quad \text{for all $\delta \in (0,1)$.}
 	\end{equation}
 \end{lemma}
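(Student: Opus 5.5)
The plan is to apply Fernique's theorem in the separable Banach space $\mathcal{C}^{\alpha,0}$ to get integrability of $\|B\|_\alpha$, and then read off the modulus-of-continuity bound from the definition of the H\"older norm.

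\textbf{Step 1: Brownian paths lie in $\mathcal{C}^{\alpha,0}$.} For $\alpha<1/2$, pick $\alpha'\in(\alpha,1/2)$. By Kolmogorov--Chentsov (or L\'evy's modulus of continuity), almost every path of $B$ is $\alpha'$-H\"older on $[0,\sfT]$ and satisfies $B_0=0$. Therefore
\[
\sfw_B(\delta)\le \delta^{\alpha'-\alpha}\|B\|_{\alpha'}\to 0 \quad \text{as } \delta\to 0,
\]
so $B\in\mathcal{C}^{\alpha,0}$ almost surely. Modifying $B$ on a null set, we regard $B$ as an $\mathcal{C}^{\alpha,0}$-valued random variable.

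\textbf{Step 2: Measurability and Gaussianity.} These are the main technical points. For measurability, $\mathcal{C}^{\alpha,0}$ is separable, so its Borel $\sigma$-field is generated by the norm balls. The norm $\|x-y\|_\alpha$ is a supremum of $|x(t)-y(t)-x(s)+y(s)|/|t-s|^\alpha$, which can be taken over rational $s,t$ by continuity. Hence it is a measurable function of countably many evaluations, and $B$ is a Borel random element.

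For Gaussianity, I would argue as follows. Every $\ell\in(\mathcal{C}^{\alpha,0})^*$ applied to $B$ is a centered Gaussian variable. To see this, use the known representation of $\mathcal{C}^{\alpha,0}$ as isometric to a closed subspace of $C_0(K)$, with $K=\{(s,t):0\le s<t\le\sfT\}$ suitably compactified, via $x\mapsto (x(t)-x(s))/|t-s|^\alpha$. By Hahn--Banach and Riesz, $\ell$ is then integration against a finite signed measure. Consequently $\ell(B)$ is an $L^2$-limit of finite linear combinations of increments of $B$, hence centered Gaussian.

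A simpler alternative route would be to use the piecewise-linear (Schauder/Ciesielski) approximations $B^{(n)}$. Each $\ell(B^{(n)})$ is a finite linear combination of Gaussians, and $B^{(n)}\to B$ in $\|\cdot\|_\alpha$ almost surely. Then $\ell(B)$ is an almost sure limit of centered Gaussians, hence centered Gaussian. I would present this second route.

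\textbf{Step 3: Conclusion.} Fernique's theorem now gives $\BE[\exp(v\|B\|_\alpha^2)]<\infty$, so in particular $C_\alpha=\BE\|B\|_\alpha<\infty$.

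For the second claim, fix $\delta\in(0,1)$. For $|t-s|\le\delta$ we have
\[
|B_t-B_s|\le |t-s|^\alpha\,\sfw_B(\delta)\le \delta^\alpha\,\|B\|_\alpha .
\]
Taking the supremum over such pairs $(s,t)$ and then expectations yields
\[
\BE\Bigl[\sup_{\substack{0\le s<t\le\sfT\\|t-s|\le\delta}}|B_t-B_s|\Bigr]\le C_\alpha\delta^\alpha .
\]
The supremum is measurable because, by path continuity, it may be restricted to rational $s,t$.
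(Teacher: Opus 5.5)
Your proposal is correct and follows the paper's proof: you view $B$ as a random element of the separable Banach space $\mathcal{C}^{\alpha,0}$, apply Fernique's theorem to get $\BE\|B\|_\alpha<\infty$, and then use $\sup_{|t-s|\le\delta}|B_t-B_s|\le\delta^\alpha\|B\|_\alpha$. The only difference is that you justify why $B$ is a Borel, centered Gaussian $\mathcal{C}^{\alpha,0}$-valued random variable, via the standard almost-sure H\"older-norm convergence of the Schauder expansion. The paper leaves this point to a citation.
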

 
 \begin{proof}
 	Fix $\sfT>0$, $\alpha \in (0,\tfrac{1}{2})$. As remarked earlier, the paths of $B_t$ over $t \in [0,\sfT]$ can be viewed as random variables taking values in the separable Banach space $(\mathcal{C}^{\alpha, 0}, \|\cdot\|_\alpha)$.
 	Since $(\mathcal{C}^{\alpha, 0}, \|\cdot\|_\alpha)$ is compactly embedded in the space $(C[0,\sfT], \|\cdot\|_\infty)$ which already has a Gaussian measure, namely Wiener measure $\mathcal{W}$, defined on it. 
 	One can now see, using \cite{baldi2021intermediate} and Fernique's theorem, that there exists $v>0$ such that $\mathbb{E}\big[\exp(v\|B\|^2_\alpha)\big] < \infty$, implying in particular finiteness of all moments of $\|B\|_\alpha$. This proves the first assertion in \eqref{E:Holder-norm-moment}. To prove the second part of \eqref{E:Holder-norm-moment}, we note that 
 	for any $\delta \in (0,1)$, we have $\displaystyle \sup_{\substack{0 \le s < t \le \sfT\\|t-s| \le \delta}} |B_t - B_s| \le \sup_{\substack{0 \le s < t \le \sfT\\|t-s| \le \delta}} \frac{|B_t - B_s|}{|t-s|^\alpha} \delta^\alpha = \|B\|_\alpha \delta^\alpha$. Taking expectations, we get the stated result.
 \end{proof}

 \begin{proof}[Proof of Theorem \ref{T:CLT}]
 	Taking the expectations in Proposition \ref{P:CLT-comparison}
 	\begin{equation*}
 		\mathbb{E}\left[\sup_{0 \le t \le \sfT}|R^\eps_{\lambda^\eps_t}-r(t)-\eps  R^1_t|\right]= \mathbb{E}\left[\sup_{0 \le t \le \sfT}|R^\eps_{\lambda^\eps_t}-r(t)-\eps R^1_t| \ind_{G^\eps_\sfN}\right]  +
 		\mathbb{E}\left[\sup_{0 \le t\le\sfT}|R^\eps_{\lambda^\eps_t}-r(t)-\eps R^1_t|\ind_{\Omega\setminus G^\eps_\sfN} \right] 
 	\end{equation*}
 	
 	\begin{multline*}
 		\mathbb{E}\left[\sup_{0 \le t \le \sfT}|R^\eps_{\lambda^\eps_t}-r(t)-\eps R^1_t| \ind_{G^\eps_\sfN}\right]\le C\,  \mathbb{E}\left[\|J^\eps_\cdot-1\|_\infty+ \sup_{0 \le t \le \sfT} |R^\eps_{\lambda^\eps_t}-r(t)|^2 +\eps \sup_{\substack{0\le s<t\le\sfT \\ |t-s|\le \delta}} |W_t-W_s| \right]\\
 		\le C(\delta +\eps^2 +\eps \delta^\alpha)
 	\end{multline*}
 	where the last inequality is derived from using Lemma \ref{L:Jacobian-Estimates}, Theorem \ref{T:LLN}, and Lemma \ref{L:Holder-norm}
 	
 	The next goal is show that there exits a constant $C>0$ (independent of $\eps$) such that \begin{equation}\label{E:CLT-R-Constant-B}
 		\mathbb{E}\left[\sup_{0 \le t\le\sfT}|R^\eps_{\lambda^\eps_t}-r(t)-\eps R^1_t| \ind_{\Omega\setminus G^\eps_\sfN} \right] < C
 	\end{equation}
 	
 	Again, the job is almost done as $$\mathbb{E}\left[\sup_{0 \le t\le\sfT}|R^\eps_{\lambda^\eps_t}-r(t)-\eps R^1_t|\ind_{\Omega\setminus G^\eps_\sfN} \right] \le 	\mathbb{E}\left[\sup_{0 \le t\le\sfT}|R^\eps_{\lambda^\eps_t}-r(t)|\ind_{\Omega\setminus G^\eps_\sfN} \right] + \eps	\mathbb{E}\left[\sup_{0 \le t\le\sfT}| R^1_t|\ind_{\Omega\setminus G^\eps_\sfN} \right] $$ and we have already established in Theorem \ref{T:LLN},  $\mathbb{E}\left[\sup_{0 \le t\le\sfT}|R^\eps_{\lambda^\eps_t}-r(t)|\ind_{\Omega\setminus G^\eps_\sfN} \right]<\infty$
 	
 	So we will only focus on proving $	\mathbb{E}\left[\sup_{0 \le t\le\sfT}| R^1_t|\ind_{\Omega\setminus G^\eps_\sfN} \right]<\infty$
 	
 	Revisiting the following expression at the end of proof in Lemma \ref{L:CLT-comparison-B}

 	\begin{equation*}
 		\mathbb{E}\left[\sup_{0 \le t \le \sfT}|R^1_t|^2\right]\le  2(\ren(\sfT)+1)^2 e^{2(2\lhp/\alpha + \sfK_b e^{\lhp \sfT/\alpha})\sfT}  \mathbb{E}\left[\sup_{[0, \sfT]} |W_t|^2\right]\le C
 	\end{equation*} 
 	where $C>0$ is some constant obtained from the definiteness of $2(\ren(\sfT)+1)^2 e^{2(\lhp/\alpha + \sfK_b e^{\lhp \sfT/\alpha})\sfT}$ and using Burkholder-Davis-Gundy inequality $\mathbb{E}\left[\sup_{[0, \sfT]} |W_t|^2\right]<\infty$, establishing \eqref{E:CLT-R-Constant-B}.

 	Putting Everything together, returns,
 	
 	\begin{equation*}
 		\mathbb{E}\left[\sup_{0 \le t \le \sfT}|R^\eps_{\lambda^\eps_t}-r(t)-\eps  R^1_t|\right]\le C\left(\delta+\eps^2+\eps\delta^{\upsilon}+ \sqrt{\frac{\eps^p}{\delta}} \exp\left(-\frac{1}{8\alpha}\frac{\delta^2}{\eps^{2p}}\right)\right)
 	\end{equation*}
Recalling Remark \ref{r:angular} together with the estimates just obtained yields
 	\begin{equation*}
 		\mathbb{E}\left[\sup_{0 \le t\le\sfT} |X^\eps_t- x(t) -\eps Z_t|\right]\le C \left(\delta+\eps^{2}+\eps\delta^\upsilon + \sqrt{\frac{\eps^p}{\delta}} \exp\left(-\frac{1}{8\alpha}\frac{\delta^2}{\eps^{2p}}\right)\right)
 	\end{equation*}

 	We take $\delta=o(\eps^\nu)$ where $p>\nu>1$ which gets us,

 	\begin{equation*}
 		\mathbb{E}\left[\sup_{0 \le t\le\sfT} |X^\eps_t- x(t) -\eps Z_t|\right]\le C \left(\eps^\nu+\eps^{2}+\eps^{1+\nu\upsilon} + \sqrt{\eps^{p-\nu}} \exp\left(-\frac{1}{8\alpha}\frac{1}{\eps^{2(p-\nu)}}\right)\right)
 	\end{equation*}
With the reference of \eqref{E:small-asymp}, when $p>2$, we take $\nu>2$ and $\upsilon=1/\nu$ to obtain
 	
 	\begin{equation*}
 		\mathbb{E}\left[\sup_{0 \le t\le\sfT} |X^\eps_t- x(t) -\eps Z_t|\right]\le C\eps^2,
 	\end{equation*}
 	
 and when $p\le 2$, we take $\upsilon= 1- 1/\nu $ to obtain
 	
 	\begin{equation*}
 		\mathbb{E}\left[\sup_{0 \le t\le\sfT} |X^\eps_t- x(t) -\eps Z_t|\right]\le C\eps^\nu
 	\end{equation*}
 	
 \end{proof}
 

\section{Proofs of Lemmas}\label{S:Lemmas}

\begin{proof}[Proof of Lemma \ref{L:LLN-comparison-G}]
	If $\omega \in G^\eps_\sfN$, $t \in [0,\sfT]$, then it is easily seen from \eqref{E:stoch-state-int-eq} and \eqref{E:rtd-generic} that
	\begin{multline}\label{E:R-time-changed}
		R^\eps_{\lambda^\eps_t(\omega)}= \sum_{k=1}^{\sfN} \ind_{[t_{k-1},t_k)}(t)\left\{R^{\eps,+}_{\tau^\eps_{k-1}(\omega)} + \int_{t_{k-1}}^t b(R^\eps_{\lambda^\eps_u(\omega)})J^\eps_{u}(\omega) \thinspace du + \eps\left(W_{\lambda^\eps_t(\omega)}-W_{\tau_{k-1}^\eps(\omega)}\right)\right\}\\
		+ \ind_{[t_{\sfN},\sfT]}(t)\left\{R^{\eps,+}_{\tau^\eps_{\sfN}(\omega)} + \int_{t_{\sfN}}^t b(R^\eps_{\lambda^\eps_u(\omega)})J^\eps_{u}(\omega) \thinspace du + \eps\left(W_{\lambda^\eps_t(\omega)}-W_{\tau_{\sfN}^\eps(\omega)}\right)\right\},
	\end{multline}
	where we have made use of a simple change of variables to obtain $\int_{\tau^\varepsilon_{k-1}}^{\lambda^\varepsilon_t} b(R^{\varepsilon}_s) \ ds=\int_{t_{k-1}}^{t}  b(R^{\varepsilon}_{\lambda^\varepsilon_s}) J_{\lambda^\varepsilon_s} \ ds$.	
	Now suppose $t \in [t_{k-1}, t_{k})$ for some $1 \le k \le \sfN$ or $t \in [t_{k-1}, \sfT]$ with $k=\sfN+1$. Then, we see from equations \eqref{E:det-state-int-eq} and \eqref{E:R-time-changed}  that $R^{\varepsilon}_{\lambda^\varepsilon_t} - r(t)
	= R^{\varepsilon,+}_{\tau^\varepsilon_{k-1}} - r^+(t_{k-1}) + \int_{t_{k-1}}^{t}  b(R^{\varepsilon}_{\lambda^\varepsilon_s}) J_{\lambda^\varepsilon_s} \ ds - \int_{t_{k-1}}^{t} b(r(s)) \ ds + \varepsilon (W_{\lambda^\varepsilon_t}-W_{\tau^\varepsilon_{k-1}})$. Adding and subtracting $\int_{t_{k-1}}^{t}  b(R^{\varepsilon}_{\lambda^\varepsilon_s}) \ ds$, and using Taylor's formula, we get			
	\begin{multline}\label{E:R-error-k}
		R^{\varepsilon}_{\lambda^\varepsilon_t} - r(t) = \left\{R^{\varepsilon,+}_{\tau^\varepsilon_{k-1}} - r^+(t_{k-1})\right\} + \II^\eps_k(t), \qquad \text{where} \\
		\II^\eps_k(t) \triangleq \int_{t_{k-1}}^{t} b(R^{\varepsilon}_{\lambda^\varepsilon_s}) (J_{\lambda^\varepsilon_s}-1) \ ds + \int_{t_{k-1}}^{t} b^{\prime}(\xi^{k,\varepsilon}_s)\left(R^{\varepsilon}_{\lambda^\varepsilon_s}-r(s)\right) \ ds + \varepsilon (W_{\lambda^\varepsilon_t}-W_{\tau^\varepsilon_{k-1}})
	\end{multline}
	where $\xi^{k,\varepsilon}_s$ is a point between $r(s)$ and $R^\varepsilon_{\lambda^\varepsilon_s}$. As $t \nearrow t_k$, $1 \le k \le \sfN$, the expression above approaches $R^{\eps,-}_{\tau^\eps_k}-r^-(t_k)$. Recalling the resetting rule in the last lines of \eqref{E:det-state-int-eq} and \eqref{E:stoch-state-int-eq}, we use Taylor's formula to get 
	\begin{equation}\label{E:R-resetting-k} 
		R^{\varepsilon,+}_{\tau^\varepsilon_{k}} - r^+(t_{k})=h(R^{\varepsilon,-}_{\tau^\varepsilon_{k}})-h(r^-(t_{k}))=h^\prime(\eta^{k,\varepsilon})(R^{\varepsilon,-}_{\tau^\varepsilon_{k}}-r^-(t_{k})), 
	\end{equation} 
	where $\eta^{k,\varepsilon}$ is a point between $r^-(t_{k})$ and $R^{\varepsilon,-}_{\tau^\varepsilon_{k}}$.  Starting from the interval $[t_0,t_1)$ and working forward in time, successive alternate application of \eqref{E:R-error-k} followed by \eqref{E:R-resetting-k} yields that for $t \in [t_{k-1},t_k)$ with $1 \le k \le \sfN$, or $t \in [t_{k-1},\sfT]$ with $k=\sfN+1$, we have
	\begin{equation*}
		R^\eps_{\lambda^\eps_t}-r(t)= \prod_{j=1}^{k-1}h^\prime(\eta^{j,\eps})\left\{R^{\eps,+}_{\tau^\eps_0}-r^+(t_0)\right\} + \sum_{i=1}^{k-1} \left(\prod_{j=i}^{k-1} h^\prime(\eta^{j,\eps})\right) \II^\eps_i(t_i) + \II^\eps_k(t),
	\end{equation*}
	where we use the conventions that $\sum_{i=1}^0 (\dots) \triangleq 0$ and $\prod_{j=1}^0 (\dots) \triangleq 1$. Unwrapping this expression and suitably regrouping the terms, we see that for $t \in [t_{k-1},t_k)$ with $1 \le k \le \sfN$ or $t \in [t_{k-1},\sfT]$ with $k=\sfN+1$, we have
	\begin{equation}\label{E:R-error-G}
		\begin{aligned}
			R^\eps_{\lambda^\eps_t}-r(t) &= \prod_{j=1}^{k-1}h^\prime(\eta^{j,\eps})\left\{R^{\eps,+}_{\tau^\eps_0}-r^+(t_0)\right\} + \JJ^\eps_k(t) + \LL^\eps_k(t) + \MM^\eps_k(t), \quad \text{where} \\  
			\JJ^\eps_k(t) &\triangleq \sum_{i=1}^{k-1} \left(\prod_{j=i}^{k-1} h^\prime(\eta^{j,\eps})\right) \int_{t_{i-1}}^{t_i} b(R^\eps_{\lambda^\eps_s}) (J^\eps_{s}-1) \thinspace ds
			+ \int_{t_{k-1}}^t b(R^\eps_{\lambda^\eps_s})\left(J^\eps_{s} - 1 \right) \thinspace ds \\ 
			\LL^\eps_k(t) &\triangleq  \sum_{i=1}^{k-1} \left(\prod_{j=i}^{k-1} h^\prime(\eta^{j,\eps})\right) \int_{t_{i-1}}^{t_i} b^\prime(\xi^{i,\eps}_s) \left(R^\eps_{\lambda^\eps_s}-r(s)\right) \thinspace ds + \int_{t_{k-1}}^t b^\prime(\xi^{k,\eps}_s)\left(R^\eps_{\lambda^\eps_s}-r(s)\right) \thinspace ds\\
			\MM^\eps_k(t) &\triangleq  \eps \sum_{i=1}^{k-1} \left(\prod_{j=i}^{k-1} h^\prime(\eta^{j,\eps})\right) \left(W_{\tau^\eps_i}-W_{\tau^\eps_{i-1}}\right)
			+ \eps\left(W_{\lambda^\eps_t}-W_{\tau^\eps_{k-1}}\right).
		\end{aligned}
	\end{equation}
	Recalling Assumptions \ref{A:vf-wedge} and \ref{A:resetting-map}, we now carefully estimate the terms on the right hand side of the first line in \eqref{E:R-error-G}. The first term is zero on account of the initial conditions. 
	Next, we note that $|\JJ^\eps_k(t)| \le (\|h^\prime\|^{k-1}\vee 1)\sfK_b \int_0^t |J^\eps_{s}-1| \thinspace ds \le \sfK_b (\|h^\prime\|^{\sfN}\vee 1) \int_0^\sfT |J^\eps_{s}-1| \thinspace ds$.
	It is easy to see that $|\LL^\eps_k(t)| \le \sfK_b (\|h^\prime\|^{\sfN}\vee 1) \int_0^t \sup_{0 \le u \le s}|R^\eps_{\lambda^\eps_u}-r(u)| \thinspace ds$. Finally, $|\MM^\eps_k(t)| \le 2k \eps (\|h^\prime\|^{\sfN}\vee 1) \sup_{0 \le s \le \sfT} |W_s|$. Thus, there exists a constant $C>0$ (depending on $\sfK_b$, $\|h^\prime\|$, $\sfT$, $\sfN$) such that $|R^\eps_{\lambda^\eps_t}-r(t)| \le C\left\{\|J^\eps_\cdot-1\|_\infty + \int_0^t \sup_{0 \le u \le s}|R^\eps_{\lambda^\eps_u}-r(u)| \thinspace ds + \eps \sup_{0 \le s \le \sfT} |W_s|\right\}$. Since the right-hand side of the latter is non-decreasing in $t$, we have
	\begin{equation*}
		\sup_{0 \le s \le t} |R^\eps_{\lambda^\eps_s}-r(s)| \le C\left\{\|J^\eps_\cdot-1\|_\infty + \int_0^t \sup_{0 \le u \le s}|R^\eps_{\lambda^\eps_u}-r(u)| \thinspace ds + \eps \sup_{0 \le s \le \sfT} |W_s|\right\}.
	\end{equation*}
	
	Applying Gronwall's inequality to the above equation, yields,
	\begin{equation*}
		\sup_{0 \le s \le t} |R^\eps_{\lambda^\eps_s}-r(s)|\le C\left\{\|J^\eps_\cdot-1\|_\infty+\eps \sup_{0 \le s \le \sfT} |W_s|\right\}e^{Ct}
	\end{equation*}
	
	Next, we estimate the close proximity of angular velocities in the Good sets. 
	
	From equations \eqref{E:det-state-int-eq} and \eqref{E:stoch-state-int-eq}
	\begin{equation}
		\begin{aligned}
			\Theta^\eps_{\lambda^\varepsilon_t}-\theta(t) &= \sum_{k \ge 1} \ind_{[\tau^\eps_{k-1},\tau^\eps_k)}(\lambda^\varepsilon_t) \left\{\int_{\lambda^\varepsilon_t\wedge \tau^\eps_{k-1}}^{\lambda^\varepsilon_t} 1 \thinspace ds + \eps^p\left(B_{\lambda^\varepsilon_t} - B_{\lambda^\varepsilon_t\wedge \tau^\eps_{k-1}}\right)\right\}- \sum_{k \ge 1} \ind_{[t_{k-1},t_k)}(t) (t-t_{k-1})\\
			&=\sum_{k \ge 1} \ind_{[t_{k-1},\thinspace t_k)}(t)\left\{\left(\lambda^\varepsilon_t-\lambda^\varepsilon_t\wedge \tau^\eps_{k-1}-(t-t_{k-1})\right)+\eps^p \left(B_{\lambda^\varepsilon_t} - B_{\lambda^\varepsilon_t\wedge \tau^\eps_{k-1}}\right)\right\}\\
			& \text{We merged the two summand into one by observing } \lambda^\varepsilon_t\in[\tau^\varepsilon_{k-1},\thinspace \tau^\varepsilon_k) \text{ iff } t\in[t_{k-1}, \thinspace t_k)
		\end{aligned}
	\end{equation}
	
	Therefore, using the triangle inequality, we obtain the following,
	\begin{equation*}
		\begin{aligned}
			\left|\Theta^\eps_{\lambda^\varepsilon_t}-\theta(t)\right|&\le \sum_{k \ge 1} \ind_{[t_{k-1},\thinspace t_k)}(t)\left\{\left|\lambda^\varepsilon_t-\lambda^\varepsilon_t\wedge \tau^\eps_{k-1}-(t-t_{k-1})\right| + \eps^p \left|B_{\lambda^\varepsilon_t} - B_{\lambda^\varepsilon_t\wedge \tau^\eps_{k-1}}\right|\right\}\\
			&\le \sum_{k \ge 1} \ind_{[t_{k-1},\thinspace t_k)}(t)\left\{\max_{t\in[t_{k-1}, t_k)} \left|\lambda^\varepsilon_t-\lambda^\varepsilon_t\wedge \tau^\eps_{k-1}-(t-t_{k-1})\right|+ 2\eps^p \sup_{[0, \sfT]} |B_t|\right\}\\
			&\leq \sum_{k \ge 1} \ind_{[t_{k-1},\thinspace t_k)}(t)\left(2\delta+ 2\varepsilon^p\sup_{[0,\thinspace \sfT]} |B_t|\right), \qquad (\text{Using Lemma \ref{L:distorion}})
		\end{aligned}
	\end{equation*}
\end{proof}

\begin{proof}[Proof of Lemma \ref{L:LLN-comparison-B}]

	Since $\lambda^\eps_t(\omega) \equiv t$ for $\omega \notin G^\eps_\sfN$, we have $R^\eps_{\lambda^\eps_t}=R^\eps_t$ for $\omega \notin G^\eps_\sfN$. Thus, $|R^\eps_{\lambda^\eps_t}-r(t)|\ind_{\Omega\setminus G^\eps_\sfN} \le \left(|R^\eps_t|+|r(t)|\right) \ind_{\Omega\setminus G^\eps_\sfN}$. Let us now separately estimate $|r(t)|$ and $|R^\eps_t|$. 
	Using equation \eqref{E:MVT-h} from Assumption \ref{A:resetting-map}, an easy induction argument yields that for $t \in [t_{k-1},t_k)$, $k \in \BN$, we have $r(t)=\left(\prod_{j=1}^{k-1} h^\prime(\eta_j)\right)r^+(t_0) + \sum_{i=1}^{k-1} \left(\prod_{j=i}^{k-1} h^\prime(\eta_j)\right) \int_{t_{i-1}}^{t_i} b(r(s)) \thinspace ds + \int_{t_{k-1}}^t b(r(s)) \thinspace ds$ where $\eta_j$ is a number between $0$ and $r^-(t_j)$. Recalling equation \eqref{E:renRen}, we can now write
	\begin{equation}\label{E:r-B}
		r(t) = \left(\prod_{j=1}^{\ren(t)} h^\prime(\eta_j)\right)r^+(t_0) + \sum_{i=1}^{\ren(t)} \left(\prod_{j=i}^{\ren(t)} h^\prime(\eta_j)\right) \int_{t_{i-1}}^{t_i} b(r(s)) \thinspace ds + \int_{t_{\ren(t)}}^t b(r(s)) \thinspace ds 
	\end{equation}
	Similarly, we get that for $t \in [\tau^\eps_{k-1},\tau^\eps_k)$, $k \in \BN$, we have $R^\eps_t = \left(\prod_{j=1}^{k-1} h^\prime(\eta_j^\eps)\right)R^{\eps,+}_{\tau^\eps_0} \\+ \sum_{i=1}^{k-1} \left(\prod_{j=i}^{k-1} h^\prime(\eta_j^\eps)\right) \int_{\tau^\eps_{i-1}}^{\tau^\eps_i} b(R^\eps_s) \thinspace ds + \eps \sum_{i=1}^{k-1} \left(\prod_{j=i}^{k-1} h^\prime(\eta_j^\eps)\right) \left(W_{\tau^\eps_i}-W_{\tau^\eps_{i-1}}\right) + \int_{\tau^\eps_{k-1}}^t b(R^\eps_s) \thinspace ds \\+ \eps\left(W_t - W_{\tau^\eps_{k-1}}\right)$, where $\eta_j^\eps$ is a number between $0$ and $R^{\eps,-}_{\tau^\eps_j}$. Once again, by \eqref{E:renRen}, we write
	\begin{multline}\label{E:R-B}
		R^\eps_t =  \left(\prod_{j=1}^{\Ren^\eps_t} h^\prime(\eta_j^\eps)\right)R^{\eps,+}_{\tau^\eps_0} + \sum_{i=1}^{\Ren^\eps_t} \left(\prod_{j=i}^{\Ren^\eps_t} h^\prime(\eta_j^\eps)\right) \int_{\tau^\eps_{i-1}}^{\tau^\eps_i} b(R^\eps_s) \thinspace ds  + \int_{\tau^\eps_{\Ren^\eps_t}}^t b(R^\eps_s) \thinspace ds \\+ \eps \sum_{i=1}^{\Ren^\eps_t} \left(\prod_{j=i}^{\Ren^\eps_t} h^\prime(\eta_j^\eps)\right) \left(W_{\tau^\eps_i}-W_{\tau^\eps_{i-1}}\right) + \eps\left(W_t - W_{\tau^\eps_{\Ren^\eps_t}}\right).
	\end{multline}
	Recalling Assumption \ref{A:vf-wedge}, we note that $|R^\eps_t| \le \|h^\prime\|_\infty^{\Ren^\eps_t}|r_0| + \sfK_b\sum_{i=1}^{\Ren^\eps_t}\|h^\prime\|_\infty^{\Ren^\eps_t-(i-1)} (\tau^\eps_i-\tau^\eps_{i-1}) + \sfK_b(t-\tau^\eps_{\Ren^\eps_t}) + \eps \sum_{i=1}^{\Ren^\eps_t}\|h^\prime\|_\infty^{\Ren^\eps_t-(i-1)}|W_{\tau^\eps_i}-W_{\tau^\eps_{i-1}}| + \eps|W_t-W_{\tau^\eps_{\Ren^\eps_t}}|$. A simple calculation now yields
	\begin{multline}\label{E:R-B-estimate}
		|R^\eps_t| \le \left(\|h^\prime\|_\infty\vee 1\right)^{\Ren^\eps_t}\left\{|r_0| + \sfK_b t + \eps\sum_{i=1}^{\Ren^\eps_t}|W_{\tau^\eps_i}-W_{\tau^\eps_{i-1}}| + \eps|W_t-W_{\tau^\eps_{\Ren^\eps_t}}|\right\}\\
		\le \left(\|h^\prime\|_\infty\vee 1\right)^{\Ren^\eps_t}\left\{|r_0| + \sfK_b t + 2\eps(\Ren^\eps_t+1)\sup_{s \in [0,\sfT]}|W_s|\right\},
	\end{multline}
	where the latter follows from the triangle inequality. A similar simpler calculation yields
	\begin{equation}\label{E:r-B-estimate}
		|r(t)| \le \left(\|h^\prime\|_\infty\vee 1\right)^{\ren(t)}\left\{|r_0| + \sfK_b t\right\}.
	\end{equation}
	Recalling \eqref{E:log-h-prime}, equations \eqref{E:R-B-estimate} and \eqref{E:r-B-estimate} easily yield \eqref{E:LLN-comparison-B}.

	We first calculate the proximity of the angular velocities when {\sc bm} added to $\dot{\Theta}$
	\begin{equation*}
		\begin{aligned}
			\Theta^\eps_t-\theta(t) &= \sum_{k \ge 1} \ind_{[\tau^\eps_{k-1},\tau^\eps_k)}(t) \left\{\int_{t\wedge \tau^\eps_{k-1}}^{t} 1 \thinspace ds + \eps^p \left(B_{t} - B_{t\wedge \tau^\eps_{k-1}}\right)\right\}- \sum_{k \ge 1} \ind_{[t_{k-1},t_k)}(t) (t-t_{k-1})
		\end{aligned}
	\end{equation*}
	
	Therefore, using the triangle inequality, we obtain the following,
	\begin{equation}
		\begin{aligned}
			\left|\Theta^\varepsilon_t- \theta(t)\right|&\le  \sum_{k \ge 1} \ind_{[\tau^\eps_{k-1},\tau^\eps_k)}(t) \left(|t - t\wedge\tau^\varepsilon_{k-1}| +\eps^p |B_t - B_{t\wedge \tau^\varepsilon_{k-1}}|\right)\\
			& + \sum_{k\ge 1}\ind_{[t_{k-1}, t_k)} \left(|t- t_{k-1}|\right)\\
			&\le \left(4\sfT + 2\eps^p \sup_{[0, \sfT]} |B_t|\right)
		\end{aligned}
	\end{equation}
\end{proof}
\begin{proof}[Proof of Lemma \ref{L:CLT-comparison-G}]
	Let $\omega \in G^\eps_\sfN$, and suppose that $t \in [t_{k-1}, t_{k})$ for some $1 \le k \le \sfN$ or $t \in [t_{k}, \sfT]$ with $k=\sfN+1$. Then, we see from equations \eqref{E:det-state-int-eq}, \eqref{E:fluct-proc} and \eqref{E:R-time-changed}  that $R^{\varepsilon}_{\lambda^\varepsilon_t} - r(t)- \eps R^1_t
	= R^{\varepsilon,+}_{\tau^\varepsilon_{k-1}} - r^+(t_{k-1})-\eps R^{1,+}_{t_{k-1}} + \int_{t_{k-1}}^{t}  b(R^{\varepsilon}_{\lambda^\varepsilon_s}) J_{s} \ ds - \int_{t_{k-1}}^{t} \left[b(r(s))+\eps b^\prime(r(s))R^1_s\right]\ ds + \varepsilon (W_{\lambda^\varepsilon_t}-W_{\tau^\varepsilon_{k-1}}) - \eps (W_t - W_{t_{k-1}})$. 
	Adding and subtracting $\int_{t_{k-1}}^{t}  b(R^{\varepsilon}_{\lambda^\varepsilon_s}) \ ds$ and $\int_{t_{k-1}}^{t} b^\prime(r(s)) \left(R^\eps_{\lambda^\eps_s} - r(s)\right) \thinspace ds$, and using Taylor's formula, we get			
	\begin{multline}\label{E:R-error-k-clt}
		R^{\varepsilon}_{\lambda^\varepsilon_t} - r(t) - \eps R^1_t = \left\{R^{\varepsilon,+}_{\tau^\varepsilon_{k-1}} - r^+(t_{k-1}) -\eps R^{1,+}_{t_{k-1}}\right\} + \hat{\II}^\eps_k(t), \qquad \text{with} \\
		\hat{\II}^\eps_k(t) \triangleq \int_{t_{k-1}}^{t} b(R^{\varepsilon}_{\lambda^\varepsilon_s}) (J^\eps_{s}-1) \ ds + \int_{t_{k-1}}^{t} \frac{b^{\prime\prime}(\hat\xi^{k,\varepsilon}_s)}{2}\left(R^{\varepsilon}_{\lambda^\varepsilon_s}-r(s)\right)^2 \ ds \\+ \int_{t_{k-1}}^{t} b^{\prime}(r(s))\left(R^\eps_{\lambda^\eps_s} - r(s)-\eps R^1_s\right)\ ds +\varepsilon (W_{\lambda^\varepsilon_t}-W_{\tau^\varepsilon_{k-1}})-\eps (W_t - W_{t_{k-1}})
	\end{multline}
	where $\hat\xi^{k,\varepsilon}_s$ is a point between $r(s)$ and $R^\varepsilon_{\lambda^\varepsilon_s}$. As $t \nearrow t_k$, $1 \le k \le \sfN$, the expression above approaches $R^{\eps,-}_{\tau^\eps_k}-r^-(t_k) - \eps R^{1,-}_{t_k}$. The resetting rules in \eqref{E:det-state-int-eq}, \eqref{E:stoch-state-int-eq} and \eqref{E:fluct-proc} imply that $R^{\varepsilon,+}_{\tau^\varepsilon_{k}} - r^+(t_{k})-\eps R^{1,+}_{t_k} =h(R^{\varepsilon,-}_{\tau^\varepsilon_{k}})-h(r^-(t_{k}))- \eps h^{\prime}(r^{-}(t_{k})) R^{1,-}_{t_k}$. If we now add and subtract $h^\prime(r^-(t_k))(R^{\eps,-}_{\tau^\eps_k}-r^-(t_k))$, and then use Taylor's formula, we get
	\begin{equation}\label{E:R-resetting-k-clt}
			R^{\varepsilon,+}_{\tau^\varepsilon_{k}} - r^+(t_{k})-\eps R^{1,+}_{t_k}
			=\frac{h^{\prime\prime}(\hat\eta^{k,\varepsilon})}{2}(R^{\varepsilon,-}_{\tau^\varepsilon_{k}}-r^-(t_{k}))^2 + h^\prime(r^-(t_{k}))(R^{\eps, -}_{\tau^\eps_k} - r^-(t_k)-\eps R^{1,-}_{t_k}), 
	\end{equation} 
	where $\hat\eta^{k,\varepsilon}$ is a point between $r^-(t_{k})$ and $R^{\varepsilon,-}_{\tau^\varepsilon_{k}}$. We now proceed in a manner similar to that in Lemma \ref{L:LLN-comparison-G}. Starting from $[t_0,t_1)$, we work our way forward in time alternately applying \eqref{E:R-error-k-clt} and  \eqref{E:R-resetting-k-clt} in succession. As a result, we obtain that for $t \in [t_{k-1},t_k)$ with $1 \le k \le \sfN$, or $t \in [t_{k-1},\sfT]$ with $k=\sfN$+1, we have
	\begin{multline}\label{E:CLT-decomp-basic}
		R^\eps_{\lambda^\eps_t}-r(t) - \eps R^1_t = \prod_{j=1}^{k-1}h^\prime(r^-(t_j))\left(R^{\eps,+}_{\tau^\eps_0}-r^+(t_0) - \eps R^{1,+}_{t_0}\right) \\+ \sum_{i=1}^{k-1} \left(\prod_{j=i+1}^{k-1} h^\prime(r^-(t_j))\right) \frac{h^{\prime\prime}(\hat\eta^{i,\eps})}{2}\left(R^{\eps,-}_{\tau^\eps_i}-r^-(t_i)\right)^2 
		+ \sum_{i=1}^{k-1}\left(\prod_{j=i}^{k-1} h^\prime(r^-(t_j))\right) \hat{\II}^\eps_i(t_i) + \hat{\II}^\eps_k(t)
	\end{multline}
	where the quantities $\hat{\II}^\eps_i(t)$ are as in equation \eqref{E:R-error-k-clt}, and we use the conventions $\sum_{i=1}^0 (\dots) \triangleq 0$ and $\prod_{j=1}^0 (\dots) \triangleq 1$.  Carefully expanding the right-hand side of \eqref{E:CLT-decomp-basic} using the expressions for $\hat\II^\eps_i(t)$ from \eqref{E:R-error-k-clt}, we get
	\begin{multline}\label{E:CLT-decomp-detailed}
		R^\eps_{\lambda^\eps_t}-r(t) - \eps R^1_t =\prod_{j=1}^{k-1}h^\prime(r^-(t_j))\left(R^{\eps,+}_{\tau^\eps_0}-r^+(t_0) - \eps R^{1,+}_{t_0}\right) \\
		+ \sum_{i=1}^{k-1} \left(\prod_{j=i+1}^{k-1} h^\prime(r^-(t_j))\right) \frac{h^{\prime\prime}(\hat\eta^{i,\eps})}{2}\left(R^{\eps,-}_{\tau^\eps_i}-r^-(t_i)\right)^2 + \hat{\JJ}^\eps_k(t) + \hat{\LL}^\eps_k(t) + \hat{\MM}^\eps_k(t) + \hat{\NN}^\eps_k(t) \quad \text{where}
	\end{multline} 
	\begin{equation}\label{E:JLMN}
		\begin{aligned}
			\hat\JJ^\eps_k(t) &\triangleq \sum_{i=1}^{k-1} \left(\prod_{j=i}^{k-1} h^\prime(r^-(t_{j}))\right) \int_{t_{i-1}}^{t_i} b(R^\eps_{\lambda^\eps_s}) (J^\eps_{s}-1) \thinspace ds + \int_{t_{k-1}}^t b(R^\eps_{\lambda^\eps_s})\left(J^\eps_{s} - 1 \right) \thinspace ds \\ 
			\hat\LL^\eps_k(t) &\triangleq  \sum_{i=1}^{k-1} \left(\prod_{j=i}^{k-1} h^\prime(r^-(t_{j}))\right) \int_{t_{i-1}}^{t_i} \frac{b^{\prime\prime}(\hat\xi^{i,\eps}_s)}{2} \left(R^\eps_{\lambda^\eps_s}-r(s)\right)^2 \thinspace ds + \int_{t_{k-1}}^t \frac{b^{\prime\prime}(\hat\xi^{k,\eps}_s)}{2}\left(R^\eps_{\lambda^\eps_s}-r(s)\right)^2 \thinspace ds \\
			\hat\MM^\eps_k(t) &\triangleq  \sum_{i=1}^{k-1} \left(\prod_{j=i}^{k-1} h^\prime(r^-(t_{j}))\right) \int_{t_{i-1}}^{t_i} b^{\prime}(r(s))\left(R^\eps_{\lambda^\eps_s}-r(s)-\eps R^1_s\right) \thinspace ds \\
			&\quad \quad +\int_{t_{k-1}}^t b^{\prime}(r(s))\left(R^\eps_{\lambda^\eps_s}-r(s)-\eps R^1_s\right) \thinspace ds
			\\
			\hat\NN^\eps_k(t) &\triangleq  \eps \sum_{i=1}^{k-1} \left(\prod_{j=i}^{k-1} h^\prime(r^-(t_{j}))\right) \left(W_{\tau^\eps_i}-W_{\tau^\eps_{i-1}}- W_{t_{i}}+W_{t_{i-1}}\right) + \eps\left(W_{\lambda^\eps_t}-W_{\tau^\eps_{k-1}} - W_{t}+ W_{t_{k-1}}\right).
		\end{aligned}
	\end{equation}
	
	Recalling Assumptions \ref{A:vf-wedge} and \ref{A:resetting-map}, we now carefully estimate the terms on the right-hand side of the first line in \eqref{E:R-error-G}. The first two terms can be bounded above by $(\frac{\|h^{\prime\prime}\|}{2} \vee \|h^\prime\|^{k-1}) k \sup_{[0, \sfT]} |R^\eps_{\lambda^\eps_t} - r(t)|^2$. The third term is zero on account of the initial conditions. 
	Next, we note that $|\JJ^\eps_k(t)| \le (\|h^\prime\|^{k-1}\vee 1)\sfK_b \int_0^t |J^\eps_{s}-1| \thinspace ds \le \sfK_b (\|h^\prime\|^{\sfN}\vee 1) \int_0^\sfT |J^\eps_{s}-1| \thinspace ds$.
	It is easy to see that $|\LL^\eps_k(t)| \le \sfK_b (\|h^\prime\|^{\sfN}\vee 1) \int_0^t \sup_{0 \le u \le s}|R^\eps_{\lambda^\eps_u}-r(u)|^2 \thinspace ds$ and $|\MM^\eps_k(t)|\le  \sfK_b (\|h^\prime\|^{\sfN}\vee 1) \int_{0}^{t} \sup_{0\le u\le s} |R^\eps_{\lambda^\eps_u}- r(u)- \eps R^1_u|\ ds$. Finally, $|\NN^\eps_k(t)| \le 2k \eps (\|h^\prime\|^{\sfN}\vee 1) \sup_{\substack{0\le s<t\le\sfT \\ |t-s|\le \delta}} |W_t-W_s|$. Thus, there exists a constant $C>0$ (depending on $\sfK_b$, $\|h^\prime\|$, $\|h^{\prime\prime}\|$, $\sfT$, $\sfN$) such that
	
	\begin{equation*}
		\begin{aligned}
			|R^\eps_{\lambda^\eps_t}-r(t)- \eps R^1_t| \le  & C\left\{\|J^\eps_\cdot-1\|_\infty+ \sup_{[0, \sfT]} |R^\eps_{\lambda^\eps_t} - r(t)|^2 + \int_0^t \sup_{0 \le u \le s}|R^\eps_{\lambda^\eps_u}-r(u)|^2 \thinspace ds \right. \\ & \left.  + \int_0^t \sup_{0 \le u \le s}|R^\eps_{\lambda^\eps_u}-r(u)-\eps R^1_u| \thinspace ds+ \eps \sup_{\substack{0\le s<t\le\sfT \\ |t-s|\le \delta}} |W_t-W_s|\right\} .
		\end{aligned}
	\end{equation*}
	Since the right-hand side of the latter is non-decreasing in $t$, we have
	\begin{equation*}
		\begin{aligned}
			\sup_{0 \le s \le t} |R^\eps_{\lambda^\eps_s}-r(s)-\eps R^1_s|  \le & C\left\{\|J^\eps_\cdot-1\|_\infty + \sup_{[0, \sfT]} |R^\eps_{\lambda^\eps_t} - r(t)|^2 + \int_0^t \sup_{0 \le u \le s}|R^\eps_{\lambda^\eps_u}-r(u)|^2 \thinspace ds \right. \\ & \left.  + \int_0^t \sup_{0 \le u \le s}|R^\eps_{\lambda^\eps_u}-r(u)-\eps R^1_u| \thinspace ds + \eps \sup_{\substack{0\le s<t\le\sfT \\ |t-s|\le \delta}} |W_t-W_s|\right\}.
		\end{aligned}
	\end{equation*}
	Applying Gronwall's inequality to the above equation, yields,
	\begin{multline*}
		\sup_{0 \le s \le t} |R^\eps_{\lambda^\eps_s}-r(s)-\eps R^1_s|\\ \le C\left\{\|J^\eps_\cdot-1\|_\infty+ \sup_{[0, \sfT]} |R^\eps_{\lambda^\eps_t} - r(t)|^2+ \int_0^t \sup_{0 \le u \le s}|R^\eps_{\lambda^\eps_u}-r(u)|^2 \thinspace ds +\eps \sup_{\substack{0\le s<t\le\sfT \\ |t-s|\le \delta}} |W_t-W_s|\right\}e^{Ct}
	\end{multline*}
	
\end{proof}

\begin{proof}[Proof of Lemma \ref{L:CLT-comparison-B}]

In attempt to prove the stated assertion for the radial coordinate in the Lemma \ref{L:CLT-comparison-B}, firstly, observe that as in the Lemma \ref{L:LLN-comparison-B}, 
$|R^\eps_{\lambda^\eps_t}-r(t)-\eps R^1_t|\ind_{\Omega\setminus G^\eps_\sfN} \le \left(|R^\eps_t|+|r(t)| + \eps |R^1_t|\right) \ind_{\Omega\setminus G^\eps_\sfN}$, where we of course already have noted  $\lambda^\eps_t(\omega) \equiv t$ for $\omega \notin G^\eps_\sfN$, we have $R^\eps_{\lambda^\eps_t}=R^\eps_t$ for $\omega \notin G^\eps_\sfN$.

Next we have done the hard work already estimating $|r(t)|$ and $|R^\eps_t|$ in Lemma \ref{L:LLN-comparison-B}. We only need to estimate $|R^1_t|$.

For the fluctuation process $R^1_t$, we may write, 
\begin{equation*}
	\begin{aligned}
		R^1_t&= \sum_{i=1}^{\ren(t)} \left(\prod_{j=i}^{\ren(t)} h^\prime(r^-(t_{j}))\right) \int_{t_{i-1}}^{t_i} b^\prime((r(s)) R^1_s\, ds + \int_{t_{\ren(t)}}^{t} b^\prime(r(s)) R^1_s\, ds \\
		&\quad +  \sum_{i=1}^{\ren(t)} \left(\prod_{j=i}^{\ren(t)} h^\prime(r^-(t_{j}))\right) (W_{t_i} - W_{t_{i-1}}) + (W_t -W_\ren(t))
	\end{aligned}
\end{equation*}

Furthermore, we observe the following,

\begin{multline*}
	|R^1_t| \le \left(\|h^\prime\|_\infty \vee 1\right)^{\ren(t)} \left[\int_{0}^{t} b^\prime(r(s)) R^1_s ds + 2(\ren(t)+1)\sup_{[0, \sfT]} |W_t| \right]\\
	\text{The Grownwall's inequality yields, }\\
	|R^1_t| \le 2(\ren(t)+1) \left(\|h^\prime\|_\infty \vee 1\right)^{\ren(t)}  e^{\sfK_b \left(\|h^\prime\|_\infty \vee 1\right)^{\ren(t)} t } \sup_{[0, \sfT]} |W_t|
\end{multline*}

 Which can be further written as $|R^1_t|\le  2(\ren(t)+1) e^{(2\lhp/\alpha + \sfK_b e^{\lhp t/\alpha})t}  \sup_{[0, \sfT]} |W_t| $

	\end{proof}

\begin{lemma}\label{L:existence}
The solutions $(r(t),\theta(t))$ and $(R^\eps_t,\Theta^\eps_t)$ to the integral equations \eqref{E:det-state-int-eq} and \eqref{E:stoch-state-int-eq} exist.
\end{lemma}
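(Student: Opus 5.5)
The plan is to construct both solutions by recursion over impact epochs, solving a classical (stochastic) ODE on each inter-impact interval and then applying the resetting map.

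\textbf{Deterministic case.} Since $b$ is globally Lipschitz (Assumption \ref{A:vf-wedge}), for any initial value $y$ the ODE $\dot y = b(y)$ has a unique global solution $\phi(t;y)$. Set $t_k=k\alpha$ and $r^+(t_0)=r_0$. Recursively, on $[t_{k-1},t_k)$ put
\[
r(t)=\phi\bigl(t-t_{k-1};r^+(t_{k-1})\bigr), \qquad \theta(t)=t-t_{k-1}.
\]
The left limit $r^-(t_k)=\phi(\alpha;r^+(t_{k-1}))$ exists, and we set $r^+(t_k)=h(r^-(t_k))$. Positivity is preserved only if $r$ stays nonnegative; but since $h$ is only defined on $[0,\infty)$, I would extend $h$ to $\BR$ (e.g.\ oddly, or by $h(x)=h'(0)x$ for $x<0$), a choice that does not affect any estimate. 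By construction $\theta^-(t)=\alpha$ exactly at $t=t_k$ and $\theta<\alpha$ elsewhere, so the $t_k$ are the hitting times in \eqref{E:det-state-int-eq}. Since the $t_k$ are uniformly spaced, the recursion defines $(r,\theta)$ on all of $[0,\infty)$ as a c\`adl\`ag function.

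\textbf{Stochastic case.} Here $f\equiv 0$ for the theorems, but for general bounded measurable $f$ with $\|f\|_\infty<1$ I would proceed as follows. Given $\tau^\eps_{k-1}$ (a stopping time) and $\filt_{\tau^\eps_{k-1}}$-measurable data $R^{\eps,+}_{\tau^\eps_{k-1}}$, $\Theta=0$, solve the SDE system for $t\ge\tau^\eps_{k-1}$ driven by the shifted Brownian motions $W_{\tau^\eps_{k-1}+\cdot}-W_{\tau^\eps_{k-1}}$ and $B_{\tau^\eps_{k-1}+\cdot}-B_{\tau^\eps_{k-1}}$, which are Brownian motions independent of $\filt_{\tau^\eps_{k-1}}$ by the strong Markov property.

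The radial equation has additive noise and a Lipschitz drift, so it admits a pathwise unique strong solution, obtained for instance by Picard iteration pathwise. The angular equation has a bounded measurable drift in $\zeta f$ with additive nondegenerate noise when $\sigma=1$. For $f\equiv0$ it is explicit: $\Theta_t=(t-\tau^\eps_{k-1})+\eps^p\sigma(B_t-B_{\tau^\eps_{k-1}})$. For general measurable $f$ I would invoke Zvonkin/Veretennikov strong existence for additive noise; this is the only delicate point, and I would restrict to $f$ Lipschitz if needed, or to $f\equiv 0$, which is what the main theorems use.

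Then define $\tau^\eps_k=\inf\{t>\tau^\eps_{k-1}:\Theta_t=\alpha\}$. This is a stopping time, finite a.s.\ since $\Theta$ has drift bounded below by $1-\|f\|_\infty>0$ (when $f\equiv0$ it is the inverse Gaussian time of Lemma \ref{L:FPT}). Set $R^{\eps,+}_{\tau^\eps_k}=h(R^{\eps,-}_{\tau^\eps_k})$ and continue.

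\textbf{Non-accumulation of impact times.} The main obstacle is showing the recursion covers all of $[0,\infty)$, i.e.\ that $\tau^\eps_k\to\infty$ a.s. With $f\equiv0$ the increments $\tau^\eps_k-\tau^\eps_{k-1}$ are i.i.d.\ positive with mean $\alpha$ (Lemma \ref{L:FPT}), so the strong law of large numbers gives $\tau^\eps_k/k\to\alpha$ a.s. In general, the increments dominate stochastically the corresponding hitting time with drift $1+\|f\|_\infty$, and Borel–Cantelli gives the same conclusion.

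Gluing the pieces yields c\`adl\`ag adapted processes satisfying \eqref{E:stoch-state-int-eq}.
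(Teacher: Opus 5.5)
Your proposal is correct and follows the same route as the paper: construct the solution piece by piece on successive inter-impact intervals, solving the ODE/SDE from the reset data and applying $\Delta$ at each hitting time, then glue the pieces together. Your version is more complete than the paper's sketch. It justifies the restart via the strong Markov property and shows $\tau^\eps_k\to\infty$ a.s., so the glued process is defined on all of $[0,\infty)$. It also flags issues the paper passes over: the domain of $h$, and strong existence when $f$ is merely measurable.
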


\begin{proof}
To obtain the solution to \eqref{E:det-state-int-eq}, we define a sequence of functions $\{(\sfr^n(t),\sfv^n(t))\}_{n=1}^\infty$ recursively by
$\sfr^n(t) = r_{n-1}^+ + \int_{t \wedge t_{n-1}}^t b(\sfr^n(s)) \thinspace ds$, $\sfv^n(t) = \theta_{n-1}^+ + [t-(t \wedge t_{n-1})]$, where $(r_0^+,\theta_0^+) = (r_0,\theta_0)$ and $(r_n^+,\theta_n^+) \triangleq \Delta(\sfr^{n}(t_n),\sfv^n(t_n))$ for $n \ge 1$, $t_n=n\alpha$ for $n \in \BZ^+$.
The state $(r(t),\theta(t))$ is obtained by putting together the pieces above according to
$r(t) \triangleq \sum_{n \ge 1} \sfr^n(t)\cdot \ind_{[t_{n-1},t_n)}(t)$, $\theta(t) \triangleq \sum_{n \ge 1} \sfv^n(t)\cdot \ind_{[t_{n-1},t_n)}(t)$.
The solution to \eqref{E:stoch-state-int-eq} is obtained in a similar manner: we define a sequence of stochastic processes $\{(\sfR^{n,\eps}_t,\sfV^{n,\eps}_t)\}_{n=1}^\infty$ via
$\sfR^{n,\eps}_t = r^{\eps,+}_{n-1} + \int_{t\wedge \tau^\eps_{n-1}}^t b(\sfR^{n,\eps}_s) \thinspace ds + \eps (W_t-W_{t\wedge \tau^\eps_{n-1}})$, $\sfV^{n,\eps}_t = \theta^{\eps,+}_{n-1} + \int_{t\wedge \tau^\eps_{n-1}}^t [1 + \eps f(\sfR^{n,\eps}_s,\sfV^{n,\eps}_s)] \thinspace ds + \eps^p \sigma (B_t-B_{t\wedge \tau^\eps_{n-1}})$ where $(r^{\eps,+}_0,\theta^{\eps,+}_0) \triangleq (r_0,\theta_0)$, $(r^{\eps,+}_n,\theta^{\eps,+}_n) \triangleq \Delta(\sfR^{n,\eps}_{\tau^\eps_n},\sfV^{n,\eps}_{\tau^\eps_n})$ for $n \ge 1$, $\tau^\eps_0 \triangleq 0$, $\tau^\eps_n \triangleq \inf\{t> \tau^\eps_{n-1}:\sfV^{n,\eps}_t=\alpha\}$ for $n \ge 1$.
The stochastic process $(R^\eps_t,\Theta^\eps_t)$ is now given by 
$R^\eps_t \triangleq \sum_{n \ge 1} \sfR^{n,\eps}_t \cdot \ind_{[\tau^\eps_{n-1},\tau^\eps_n)}(t)$, $\Theta^\eps_t \triangleq \sum_{n \ge 1} \sfV^{n,\eps}_t \cdot  \ind_{[\tau^\eps_{n-1},\tau^\eps_n)}(t)$.
\end{proof}


\bibliographystyle{alpha}
\bibliography{ImpactsLiterature}

	\end{document}